\newtheoremstyle{custom}{3pt}{3pt}{}{}{\bfseries}{:}{.5em}{}
\theoremstyle{custom}
\newtheorem{example}    {Example}
\newtheorem{definition} [example]{Definition}
\newtheorem{theorem}    [example]{Theorem}
\newtheorem{lemma}      [example]{Lemma}
\newtheorem{corollary}  [example]{Corollary}
\newtheorem{remark} 		[example]{Remark}
\newtheorem{proposition}[example]{Proposition}
\newtheorem{assumption}[example]{Assumption}
\newcommand{\R}{\mathbb{R}}
\newcommand{\C}{\mathbb{C}}
\newcommand{\N}{\mathbb{N}}
\newcommand{\Z}{\mathbb{Z}}
\newcommand{\IG}{\mathcal{G}}
\newcommand{\augIG}{\bm{\mathcal{G}}}
\renewcommand{\P}{\mathcal{P}}
\newcommand{\aug}[1]{\bm{#1}}
\newcommand{\augX}{\bm{X}}
\def \prob {\mathbb{P}}
\newcommand{\ep}{\varepsilon}
\title{Estimating long-term behavior of periodically driven flows without trajectory integration}
\author{Gary~Froyland\thanks{School of Mathematics and Statistics, University of New South Wales, Sydney NSW 2052, Australia. E-mail: g.froyland{@}unsw.edu.au}
\and P\'eter~Koltai\thanks{Institute of Mathematics, Freie Universit\"at Berlin, 14195 Berlin, Germany. E-mail: peter.koltai{@}fu-berlin.de} }
\date{}
\begin{document}

\maketitle

\begin{abstract}
Periodically driven flows are fundamental models of chaotic behavior and the study of their transport properties is an active area of research.
A well-known analytic construction is the augmentation of phase space with an additional time dimension;  in this augmented space, the flow becomes autonomous or time-independent.
We prove several results concerning the connections between the original time-periodic representation and the time-extended representation, focusing on transport properties.
In the deterministic setting, these include single-period outflows and time-asymptotic escape rates from time-parameterized families of sets.
We also consider  stochastic differential equations with time-periodic advection term.
In this stochastic setting one has a time-periodic \emph{generator} (the differential operator given by the right-hand-side of the corresponding time-periodic Fokker-Planck equation).
We define in a natural way an autonomous generator corresponding to the flow on time-extended phase space.
We prove relationships between these two generator representations and use these to quantify decay rates of observables and to determine time-periodic families of sets with slow escape rate.
Finally, we use the generator on the time-extended phase space to create efficient numerical schemes to implement the various theoretical constructions.
These ideas build on the work of~\cite{FrJuKo13}, and no expensive time integration is required.
We introduce an efficient new hybrid approach, which treats the space and time dimensions separately.
\end{abstract}


\section{Introduction}

Periodically driven flows are fundamental testing grounds for many questions in nonlinear dynamics.
Part of their attraction is their ability to create complicated dynamics even in two-dimensional phase space, and their suitability as models of many phenomena with periodic driving, due to biological, environmental, geophysical, or engineered cycles.
The quantification of transport in dynamical systems has been intensely studied over recent decades \cite{mackay1984transport,romkedar_etal_1990,wiggins_92,haller1998finite,aref_02,jones2002invariant,wiggins2005dynamical,shadden2005definition,froyland_padberg_09,FrPa14}, from both geometric and probabilistic points of views.
For certain periodically driven area-preserving flows \cite{romkedar_etal_1990} and two-dimensional maps \cite{romkedar_wiggins_90}, the theory of lobe dynamics has helped explain how unions of pieces of stable and unstable manifolds of low period hyperbolic periodic points delineate ``lobes'' of fluid which are transported in a way organised by the manifolds.
Our contributions in the present paper also concern the quantification of transport and span both geometric and probabilistic ideas.

We consider time-periodic flows on a compact state space $X\subset\mathbb{R}^d$ of the form
\begin{equation}
\label{basicode}
\dot{x}(t)=v(t,x(t)),
\end{equation}
where $v:\tau S^1\times X\to \mathbb{R}^d$ is a continuous vector field, and $S^1$ denotes the circle of unit circumference.
One can rewrite (\ref{basicode}) as an autonomous flow at the expense of adding an extra (time) dimension:
\begin{eqnarray}
\label{extendode1}
\dot{\theta}(t)&=&1,\\
\label{extendode2}
\dot{x}(t)&=&v(\theta(t),x(t))\,.
\end{eqnarray}

\medskip

\emph{The central thread running through this paper is formalizing the connections between these two representations for various transport-related questions and exploiting these connections to compute several transport-related quantities, both theoretically and numerically.}

\medskip

First, we consider a fundamental transport question, namely, the computation of time-integrated flux through the boundaries of a time-periodic family of sets $A_t\subset X$, $t\in\tau S^1$.
We show that this problem has a natural description in the time-extended domain and can be computed as an \emph{instantaneous} flux through the time-extended set $\cup_{t\in\tau S^1}\{t\}\times A_t$ (Theorem~\ref{thm:1=2}). Our construction has a natural extension to the case of aperiodic velocity fields and aperiodic families of sets~$A_t$.

Second, we turn to the time-asymptotic transport question of defining the rate of escape from a periodic collection of sets $\{A_t\}_{t\in \tau S^1}$.
One can view this as considering an open dynamical system in a time-periodic flow, with an open domain~$A_t\subset X$ that can itself be time-periodic.
Trajectories vanish once they leave a set~$A_t$ at a time~$t+k\tau, k\in \mathbb{Z}^+$.
In discrete-time autonomous dynamical systems, $\Phi:X\to X$, one studies \emph{escape rates} from a fixed set $A\subset X$:  $E(A)=\limsup_{n\to\infty} (1/n)\log m\left(\bigcap_{i=0}^{n-1}\Phi^{-i}A\right)$, where~$m$ is normalised Lebesgue measure on~$X$.
The escape rate~$E(A)$ measures the exponential rate of decay of the measure of the set of points~$x$ which ``survive'' for~$n$ steps ($x\in A, \Phi x\in A,\ldots, \Phi^{n-1}x\in A$).
We refer the reader to the excellent survey~\cite{DeYo06} for further background on escape rates in a discrete-time autonomous setting.
Here, we generalise these ideas in two ways:  firstly to continuous time, and secondly to periodic time-dependence.
Under mild assumptions on the measurable and topological properties of this collection, we show that (i) the escape rate is not a time-dependent quantity (Theorem~\ref{thm:escrate_indep})  and (ii) the escape rate in the original phase space~$X$ is identical to the escape rate of the autonomous flow in time-extended space (Theorem~\ref{thm:escrate_nonauto_aug}).

The notion of escape rates can be generalized to stochastic differential equations (SDEs) of the form
\begin{equation}
\label{eq:sdenonauto}
dx_t=v(t,x_t)dt+\ep dw_t,
\end{equation}
where~$v:[0,\infty)\times X\to \mathbb{R}^d$ is a continuously differentiable, time-dependent vector field,~$\{w_t\}_{t\ge 0}$ is a standard Wiener process, and~$\ep \ge 0$ is a parameter describing the standard deviation of the Wiener process. We define a probabilistic description of escape rate and in Proposition~\ref{prop:escapeduality} we show that, for periodically driven SDEs, the escape rate computed in the original phase space~$X$ has the same value as the escape rate of the associated autonomous SDE in time-extended space.

Connections between the Perron--Frobenius operator and escape rates for time-homo\-geneous diffusion processes have been considered in~\cite{SchHM03,WilhelmHuisingaSeanMeyn2004}.
In the discrete-time deterministic setting, it was established in~\cite{FrSt10} that eigenfunctions of the Perron--Frobenius operator~$\mathcal{P}:L^1(X,m)\circlearrowleft$ corresponding to eigenvalues close to 1, so-called \emph{strange eigenmodes}, can be used to create subsets~$A\subset X$ with slow escape rate.
More precisely, if~$\mathcal{P}f=\lambda f$, $0<\lambda<1$, then the sets~$A^+=\{f\ge 0\}, A^-=\{f<0\}$ both have escape rates slower than~$\log \lambda$. These statements are generalized in~\cite{FrSt13} to random discrete-time systems and in~\cite{FrJuKo13} to discrete-time systems with i.i.d.\ noise.
In Theorem~\ref{thm:cont_time_escrate} we first extend this result to the continuous time setting for SDEs of the form:
\begin{equation}
\label{sde}
dx_t=v(x_t)dt+\ep dw_t\,.
\end{equation}


In Theorem~\ref{thm:nonauto_escrate} we then further generalize the result to general non-homogeneous SDEs of the form~\eqref{eq:sdenonauto}, where~$v:[0,\infty)\times X\to \mathbb{R}^d$ need not be periodic in $t$.
In this latter generalization, one chooses an essentially bounded function $f:X\to \mathbb{R}$ so that
\[
\limsup_{t\to\infty}\frac{1}{t}\log\|\P_{s,t}f\|_{L^1}=\lambda<0\,.
\]
Here,~$\P_{s,t}$ denotes the Perron--Frobenius operator (also called the transfer operator) that pushes forward densities from time~$s$ to time~$t\ge s$.
Under mild topological assumptions on the time-dependent family of sets~$A_t:=\{\P_{s,t}f\ge 0\}\subset X$, we show that the escape from this family is slower than the Lyapunov exponent (or decay rate)~$\lambda$.
Returning to the periodic driving setting, where~$v(t,\cdot)=v(t+k\tau,\cdot)$,~$k\in\mathbb{Z}^+$, we demonstrate that the family of functions that produce the slowest decay rate are the periodic family~$\{f_s\}_{s\in\tau S^1}$ corresponding to the second largest eigenvalue of~$\P_{s,s+\tau}$ (Proposition~\ref{prop:CohPoiB}).

In the final theoretical section, we consider time-extended SDEs of the form:
\begin{eqnarray}
\label{sdetext}
d\theta_t&=&dt,\\
dx_t&=&v(\theta_t,x_t)dt+\ep dw_t,
\end{eqnarray}
where~$v:\tau S^1\times X\to \mathbb{R}^d$.
We construct a time-extended generator~$\augIG$ for this time-extended SDE and identify the time fibres of eigenfunctions of~$\augIG$ as equivariant functions under~$\mathcal{P}_{s,s+t}$ (Lemma~\ref{lem:spectral_con}).
Theorem~\ref{thm:IGcohPeriodic} then states that the slowest decaying family of functions~$\{f_t\}_{t\in\tau S^1}$ can be obtained as time fibres of the eigenfunction~$\aug{f}$ corresponding to the eigenvalue of~$\augIG$ with second largest real part.
We discuss how to interpret complex eigenvalues and eigenfunctions of~$\augIG$ (they describe periodic motions different to the driving period), and other properties of the point spectrum of~$\augIG$, including how numerical methods distort the spectrum.
To give the reader an overview, we partially summarize our findings graphically in Figure~\ref{fig:connections}.

In the numerical section we describe in detail two numerical methods for approximating~$\augIG$.
First, an extension of the Ulam Galerkin approach from \cite{FrJuKo13}, and second, a new, efficient hybrid approach, which uses Fourier collocation in the time direction to take advantage of the special dynamics in that coordinate.
We apply the full Ulam numerics to analyse an SDE version of the periodically driven double gyre flow~\cite{FrPa14}, finding that the largest transport barrier is a smoothed analogue of the unstable manifold of a hyperbolic periodic point on the boundary of the domain.
Boundaries of well-known regions of elliptic dynamics are determined as the sets with the next smallest escape rate.
We numerically check Theorem \ref{thm:nonauto_escrate} by explicitly computing escape rates using simulated trajectories and verify that the escape rates are indeed slower than indicated by the corresponding eigenvalue of~$\augIG$.
Similar computations are performed for the Bickley jet \cite{RypEtAl07} using the hybrid approach to approximating~$\augIG$.
We determine transport barriers and vortices in regions similar to previous work~\cite{RypEtAl07,FrSaMo10,BVetAl10,HaBV12}, but we can do this \emph{without any expensive trajectory integration} and additionally obtain numerical estimates of transport rates such as escape rates and decay rates.
Furthermore, by utilizing information carried in complex eigenvalues, we are able to detect previously unknown slowly decaying structures with \emph{periods different to the driving period}; building upon similar observations~\cite{SinEtAl2009,FrGTQu14} for discrete-time autonomous dynamics.

\section{Accumulated outflow from a time-periodic family of sets} \label{sec:accumoutflow}

Suppose we have a compact state space~$X\subset\R^d$ and a time-dependent, continuous vector field~$v:\R\times X\to\R^d$.
We assume that the time-dependence is periodic with period~$\tau$, i.e.~$v:\tau S^1\times X\to\R^d$.
Let a family $\{A_t\}_{t\in \tau S^1}$ of subsets of~$X$ be given, such that
\begin{assumption} \label{ass:boundary}
\quad
\begin{enumerate}[(a)]
\item for every $t\in\tau S^1$, the boundary~$\partial A_t$ of~$A_t$ is parametrized by the piecewise smooth function~$a(t,\cdot): R\to\R^d$ over some parameter domain~$R\subset\R^{d-1}$; and
\item for every $t\in\tau S^1$ and $x\in\partial A_t$, there is a unique~$r\in R$ with $a(t,r)=x$, and the function $w(t,x) = \tfrac{\partial}{\partial t}a(t,r)$ is well-defined. We  think of~$w(t,x)$ as the instantaneous velocity of the boundary of~$A_t$ at~$x\in A_t$.
\end{enumerate}
\end{assumption}


The \textit{cumulative outflow flux} from the family of sets over a period of the vector field is then given by
\begin{equation}	\label{eq:cumoutflow}
\int_{\tau S^1}\int_{\partial A_t}\left\langle v(t,x)-w(t,x),n_t(x)\right\rangle^+ d\sigma(x)dt,
\end{equation}
where $\langle \cdot,\cdot\rangle$ denotes the Euclidean inner product, $n_t(x)$ denotes the unit outer normal on~$\partial A_t$ at~$x$, the superscript~$+$ denotes the positive part, i.e.~$f^+:=\max\{0,f\}$, and~$d\sigma(\cdot)$ denotes integration with respect to co-dimension 1 volume on~$X$.
It can clearly be seen that the cumulative outflow is zero if the velocities of the boundaries of the family of sets~$A_t$ match the vector field~$v$, i.e.\ if~$v\equiv w$.

Let us now consider the \textit{augmented} state space $\augX = \tau S^1\times X$ with elements~$(\theta,x) = \aug{x}\in\aug{X}$, and the augmented ODE
\begin{eqnarray}
\label{eq:augode}
\dot \theta(t) & = & 1\\
\dot x(t) & = & v(\theta(t), x(t))
\end{eqnarray}
We define the augmented vector field $\aug{v}:\augX\to\R^{d+1}$ by $\aug{v}(\theta,x) = (1,v(\theta,x)^{\rm T})^{\rm T}$, and denote its flow map by $\aug{\phi}^t:\tau S^1\times X\to \tau S^1\times X$.
Note that this dynamical system is autonomous in the extended phase space.
Using the periodic family $\{A_{\theta}\}_{\theta\in\tau S^1}$ we define the \emph{augmented set}
\begin{equation}
\aug{A} = \bigcup_{\theta=0}^\tau \{\theta\}\times A_\theta,
\label{eq:augset}
\end{equation}
Considering the autonomous system defined by $\dot{\aug{x}} = \aug{v}(\aug{x})$ acting on the augmented state space, we can define the \textit{instantaneous outflow flux} from the set~$\aug{A}$ by
\begin{equation}	\label{eq:instoutflow}
\int_{\partial \aug{A}}\langle \aug{v}(\aug{x}),\aug{n}(\aug{x})\rangle^+ d\aug{\sigma}(\aug{x}),
\end{equation}
where~$\aug{n}(\aug{x})$ is the unit outer normal on~$\partial\aug{A}$ at~$\aug{x}$ and~$d\aug{\sigma}(\cdot)$ denotes integration with respect to co-dimension 1 volume on~$\aug{X}$. Note that conditions (a) and (b) from above imply that~$\aug{n}$ exists and is well-defined almost everywhere on~$\partial\aug{A}$.

\begin{theorem}
\label{thm:1=2}
For a continuous time-periodic vector field and a time-periodic family $\{A_{\theta}\}_{\theta\in\tau S^1}$ of sets satisfying the conditions of Assumption~\ref{ass:boundary}, the cumulative outflow flux and the instantaneous outflow flux in augmented space are equal:
\begin{equation}
\int_{\tau S^1}\int_{\partial A_{\theta}}\left\langle v(\theta,x)-w(\theta,x),n_{\theta}(x)\right\rangle^+ d\sigma(x)d\theta = \int_{\partial \aug{A}}\langle \aug{v}(\aug{x}),\aug{n}(\aug{x})\rangle^+ d\aug{\sigma}(\aug{x})\,.
\label{eq:1=2}
\end{equation}
Both sides of equation~\eqref{eq:1=2} are independent of the chosen parametrization~$a$ in Assumption~\ref{ass:boundary}.
\end{theorem}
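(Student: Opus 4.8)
The plan is to establish the identity~\eqref{eq:1=2} by an explicit change of variables on~$\partial\aug{A}$, and then to read off the parametrization-independence, since the right-hand side of~\eqref{eq:1=2} manifestly refers only to the set~$\aug{A}$ and the vector field~$\aug{v}$.

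\textbf{Step 1 (coordinates and outer normal on $\partial\aug{A}$).} Using the parametrization from Assumption~\ref{ass:boundary}, the map $\Psi(\theta,r):=(\theta,a(\theta,r))$, $(\theta,r)\in\tau S^1\times R$, parametrizes~$\partial\aug{A}$, a $d$-dimensional surface in the $(d+1)$-dimensional space~$\augX$. Its tangent space at $\aug{x}=(\theta,x)$, $x=a(\theta,r)$, is spanned by $\aug{t}_0:=\partial_\theta\Psi=(1,w(\theta,x))$ and $\aug{t}_i:=\partial_{r_i}\Psi=(0,\partial_{r_i}a(\theta,r))$ for $i=1,\dots,d-1$. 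I would then check directly that $\aug{m}:=(-\langle w(\theta,x),n_\theta(x)\rangle,\ n_\theta(x))$ is orthogonal to~$\aug{t}_0$ and to every~$\aug{t}_i$ (the latter because $n_\theta(x)\perp\partial_{r_i}a$), hence normal to~$\partial\aug{A}$; and since $(0,n_\theta(x))$ is transversal to $\partial\aug{A}$ (as $\langle\aug{m},(0,n_\theta(x))\rangle=1$) and points out of~$\aug{A}$, the outer unit normal is $\aug{n}=\aug{m}/\lVert\aug{m}\rVert$ with $\lVert\aug{m}\rVert=\sqrt{1+\langle w,n_\theta\rangle^2}$.

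\textbf{Step 2 (surface element and integrand).} Decompose $w=w_\parallel+\langle w,n_\theta\rangle\,n_\theta$ with $w_\parallel$ tangent to~$\partial A_\theta$ at~$x$. Then $(0,w_\parallel)$ lies in the span of the~$\aug{t}_i$, so replacing $\aug{t}_0$ by $\aug{t}_0-(0,w_\parallel)=(1,\langle w,n_\theta\rangle\,n_\theta)$ is a shear of the spanning frame that leaves $\det(D\Psi^{\mathrm{T}}D\Psi)$ unchanged; the new frame has a block-diagonal Gram matrix, giving $\sqrt{\det(D\Psi^{\mathrm{T}}D\Psi)}=\sqrt{1+\langle w,n_\theta\rangle^2}\cdot\sqrt{\det G}$, where~$G$ is the Gram matrix of $(\partial_{r_1}a,\dots,\partial_{r_{d-1}}a)$; equivalently $d\aug{\sigma}=\sqrt{1+\langle w,n_\theta\rangle^2}\;d\sigma(x)\,d\theta$. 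On the other hand $\langle\aug{v},\aug{m}\rangle=-\langle w,n_\theta\rangle+\langle v,n_\theta\rangle=\langle v-w,n_\theta\rangle$, so $\langle\aug{v},\aug{n}\rangle^+=\langle v-w,n_\theta\rangle^+/\sqrt{1+\langle w,n_\theta\rangle^2}$.

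\textbf{Step 3 (conclusion).} Substituting Steps~1--2 into $\int_{\partial\aug{A}}\langle\aug{v},\aug{n}\rangle^+\,d\aug{\sigma}$, the factor $\sqrt{1+\langle w,n_\theta\rangle^2}$ cancels and Fubini over $\tau S^1\times R$ reproduces exactly the left-hand side of~\eqref{eq:1=2}. The right-hand side of~\eqref{eq:1=2} involves only~$\aug{A}$,~$\aug{v}$, the outer normal and co-dimension~$1$ volume on~$\augX$, none of which reference~$a$, so the left-hand side inherits this independence; directly, the left integrand depends on~$a$ only through $\langle w,n_\theta\rangle$, the intrinsic normal speed of the moving hypersurface~$\partial A_\theta$. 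The routine parts are summing over the finitely many smooth pieces of~$R$ and the degenerate case $d=1$ (where~$R$ is a point, $\partial A_\theta$ is finite, and~$G$ is empty). The one place that needs care is the geometric bookkeeping of Steps~1--2 — the orientation of~$\aug{n}$ and the Gram-determinant factorization — but both rest on the single observation that, modulo the $\partial_{r_i}$-directions, the $\partial_\theta$-tangent vector may be taken to have spatial part purely normal to~$\partial A_\theta$; after that the identity is a one-line cancellation.
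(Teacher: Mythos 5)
Your proposal is correct and follows essentially the same route as the paper: parametrize $\partial\aug{A}$ by $(\theta,r)\mapsto(\theta,a(\theta,r))$, identify the augmented outer normal as a normalization of $(-\langle w,n_\theta\rangle,\,n_\theta)$, and relate the surface measures through the Gram-determinant identity $\det(D\Psi^{\rm T}D\Psi)=(1+\langle w,n_\theta\rangle^2)\det G$, after which the normalization factors cancel. Your shear of the frame vector $\aug{t}_0$ by $(0,w_\parallel)$ is exactly the unimodular transformation used in the paper's Lemma~\ref{lem:auxlemma1}, so the two arguments coincide in substance.
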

\begin{proof}
See Appendix~\ref{app:proof:thm:1=2}.
\end{proof}
\begin{figure}[h]

\centering
\includegraphics[scale=1]{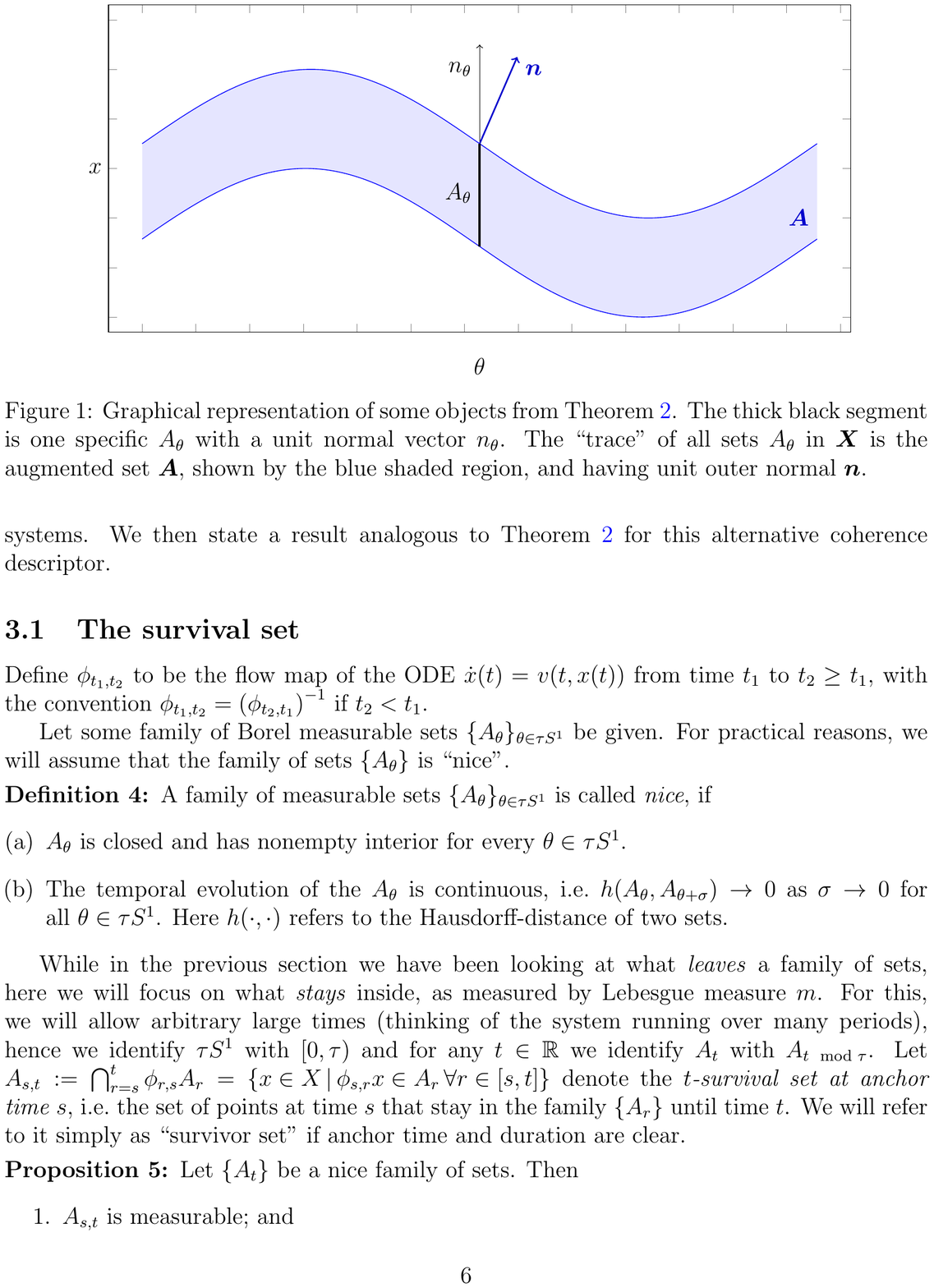}

\caption{Graphical representation of some objects from Theorem~\ref{thm:1=2}. The thick black segment is one specific~$A_{\theta}$ with a unit normal vector~$n_{\theta}$. The ``trace'' of all sets~$A_{\theta}$ in~$\aug{X}$ is the augmented set~$\aug{A}$, shown by the blue shaded region, and having unit outer normal~$\aug{n}$.}
\label{fig:augoutflow}
\end{figure}

\begin{remark} \label{rem:nonper_outflow}
An analogous statement to that of Theorem~\ref{thm:1=2} can be shown to also hold in the case when time and the family of sets is not periodic, but evolves in a finite interval, $t\in[t_0,t_1]$. Then, however, one has to integrate over the ``spatial boundary'' of the augmented set~$\aug{A}$, i.e.~$\bigcup_{t\in[t_0,t_1]}\{t\}\times \partial A_t$, the boundary in the~$x$ coordinate direction.
\end{remark}
This computation is related to a recent calculation~\cite{Kar16} of Lagrangian flux through a surface, as opposed to Eulerian flux considered here.

\section{Time-asymptotic escape rates from a time-periodic family of sets}


The purpose of this section is to introduce a description of coherence for deterministic dynamics for long flow times that admits an immediate generalization to stochastically perturbed systems.
We then state a result analogous to Theorem~\ref{thm:1=2} for this alternative coherence descriptor.

\subsection{The survivor set}	\label{ssec:survival}

Define~$\phi_{t_1,t_2}$ to be the flow map of the ODE $\dot x(t) = v(t,x(t))$ from time~$t_1$ to~$t_2\ge t_1$, with the convention $\phi_{t_1,t_2} = \left(\phi_{t_2,t_1}\right)^{-1}$ if~$t_2<t_1$. This latter case refers to integration backward in time.

Let some family of Borel measurable sets $\{A_{\theta}\}_{\theta\in\tau S^1}$ be given. For practical reasons, we will assume that the family of sets~$\{A_{\theta}\}$ is ``nice''.

\begin{definition} \label{def:niceness}
A family of measurable sets $\{A_{\theta}\}_{\theta\in\tau S^1}$ is called \emph{nice}, if
\begin{enumerate}[(a)]
\item $A_{\theta}$ is closed and has nonempty interior for every~$\theta\in \tau S^1$.
\item The temporal evolution of the $A_{\theta}$ is continuous, i.e.\ $h(A_{\theta},A_{\theta+\sigma}) \to 0$ as $\sigma\to 0$ for all~$\theta\in\tau S^1$. Here $h(\cdot,\cdot)$ refers to the Hausdorff-distance of two sets.
\end{enumerate}
\end{definition}

While in the previous section we have been looking at what \emph{leaves} a family of sets, here we will focus on what \emph{stays} inside, as measured by Lebesgue measure $m$. For this, we will allow arbitrary large times (thinking of the system running over many periods), hence we identify~$\tau S^1$ with~$[0,\tau)$ and for any~$t\in\R$ we identify~$A_t$ with~$A_{t \mod \tau}$. Let $A_{s,t}:=\bigcap_{r=s}^t \phi_{r,s}A_r=\left\{x\in X\,\vert\, \phi_{s,r}x\in A_r\,\forall r\in[s,t]\right\}$ denote the \emph{$t$-survivor set at anchor time~$s$}, i.e.\ the set of points at time~$s$ that stay in the family~$\{A_r\}$ until time~$t$. We will refer to it simply as ``survivor set'' if anchor time and duration are clear. Note that the intersection is taken over pre-images of sets under the flow.

\begin{proposition}	\label{prop:nice=measurable}
Let $\{A_t\}$ be a nice family of sets. Then
\begin{enumerate}
\item $A_{s,t}$ is measurable; and
\item for any sequence~$(r_i)_{i\in\N}\subset\R$ which is dense\footnote{A sequence $(r_i)\subset S$ is called dense in a set $S$, if for any $s\in S$ there is a subsequence~$r_{i_j}\to s$ as~$i_j\to \infty$.} in the interval~$(s,t)$ one has \linebreak[4]
$\lim_{n\to\infty} m\left(\bigcap_{i=1}^n\phi_{r_i,s}A_{r_i}\right) = m(A_{s,t})$.
\end{enumerate}
\end{proposition}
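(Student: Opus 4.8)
The plan is to handle the two claims in turn, establishing measurability first and then the continuity-from-above statement via a Hausdorff-convergence argument. For claim (1), the key observation is that $A_{s,t}=\bigcap_{r\in[s,t]}\phi_{r,s}A_r$ is an uncountable intersection, so measurability is not automatic; I would show it coincides with a \emph{countable} intersection. Fix any countable dense set $\{q_i\}\subset[s,t]$ (e.g.\ rationals). Clearly $A_{s,t}\subseteq\bigcap_i\phi_{q_i,s}A_{q_i}$. For the reverse inclusion, take $x\in\bigcap_i\phi_{q_i,s}A_{q_i}$, fix an arbitrary $r\in[s,t]$, and pick $q_{i_j}\to r$. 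Then $\phi_{s,q_{i_j}}x\in A_{q_{i_j}}$ for all $j$. Using (i) continuity of the flow $(r,x)\mapsto\phi_{s,r}x$ (from continuity of $v$), so $\phi_{s,q_{i_j}}x\to\phi_{s,r}x$; (ii) niceness part (b), so the sets $A_{q_{i_j}}$ Hausdorff-converge to $A_r$; and (iii) niceness part (a), so each $A_\theta$ is closed — one concludes $\phi_{s,r}x\in A_r$. (Concretely: $\mathrm{dist}(\phi_{s,r}x,A_r)\le \|\phi_{s,r}x-\phi_{s,q_{i_j}}x\| + \mathrm{dist}(\phi_{s,q_{i_j}}x,A_r)$, and the second term is bounded by $h(A_{q_{i_j}},A_r)\to 0$, while the first term $\to 0$; hence the distance is $0$ and, $A_r$ being closed, $\phi_{s,r}x\in A_r$.) So $x\in A_{s,t}$, giving equality; since each $\phi_{q_i,s}A_{q_i}$ is Borel (continuous, hence measurable, image-preimage of a closed set under the homeomorphism $\phi_{q_i,s}$) and the intersection is countable, $A_{s,t}$ is measurable.

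For claim (2), let $(r_i)$ be any sequence dense in $(s,t)$ and set $B_n:=\bigcap_{i=1}^n\phi_{r_i,s}A_{r_i}$. The $B_n$ are decreasing measurable sets, so $m(B_n)\downarrow m(\bigcap_n B_n)$ by continuity of measure from above (on the finite measure space $X$). It remains to identify $\bigcap_n B_n$ with $A_{s,t}$. The inclusion $A_{s,t}\subseteq\bigcap_n B_n$ is immediate since each $r_i\in[s,t]$. For the reverse inclusion, repeat verbatim the Hausdorff-convergence argument from claim (1): if $x\in\bigcap_n B_n$ then $\phi_{s,r_i}x\in A_{r_i}$ for all $i$, and density of $(r_i)$ in $(s,t)$ together with flow-continuity and closedness of the $A_r$ forces $\phi_{s,r}x\in A_r$ for every $r\in(s,t)$; the two endpoints $r=s,t$ then follow by taking $r_i\to s^+$ and $r_i\to t^-$ and again using closedness. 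Hence $\bigcap_n B_n=A_{s,t}$ and $m(B_n)\to m(A_{s,t})$.

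The main obstacle is the reverse inclusion in the density argument — showing that staying in the sets $A_{r_i}$ along a dense time-set actually forces staying in $A_r$ for \emph{all} $r$ in the interval. This is exactly where all three niceness/continuity ingredients are needed simultaneously, and it is worth being careful that the Hausdorff distance $h(A_{q_{i_j}},A_r)$ controls $\mathrm{dist}(\phi_{s,q_{i_j}}x,A_r)$ in the right direction (one only needs the ``$A_{q_{i_j}}$ is contained in a neighborhood of $A_r$'' half of the Hausdorff bound). A minor subtlety is the treatment of the endpoints $s$ and $t$ when the density hypothesis is only stated on the open interval $(s,t)$; this is resolved by approximating from inside, as indicated above. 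Everything else is routine: continuity of the flow is standard ODE theory under continuity of $v$, and continuity of measure from above uses only $m(X)<\infty$.
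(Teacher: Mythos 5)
Your proposal is correct and follows essentially the same route as the paper: both reduce the uncountable intersection to a countable one over a dense set of times (using closedness of the $A_r$, the Hausdorff continuity from niceness (b), and continuity of the flow), and then pass to the limit of measures of the nested sets. The only cosmetic differences are that the paper argues the set identity contrapositively via continuity of $r\mapsto d(x,\phi_{r,s}A_r)$ and writes out the measure convergence as a telescoping sum, whereas you argue directly with a triangle inequality against the Hausdorff distance and invoke continuity of the measure from above.
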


\begin{proof}
See Appendix~\ref{app:nice=measurable}.
\end{proof}

For \emph{volume-preserving} flows there is a simple relation between the measure of the survivor set and the outflow flux. Since the proof of this is cumbersome and does not further contribute to the exposition, we only give the main idea, and motivate this idea by an example. Note that a family of closed sets with non-empty interior satisfying Assumption~\ref{ass:boundary} of the previous section is also nice.

\begin{proposition} \label{prop:survivalVSoutflow}
Let $\{A_{\theta}\}$ be a family of closed sets with non-empty interior satisfying Assumption~\ref{ass:boundary}. Further, let~$v(t,\cdot)$ be divergence-free for all times~$t$. Then, the volume of the surviving set is monotonically decreasing in~$t$ for fixed~$s$ and satisfies at its points of differentiability
\begin{equation}
\label{eq:survivalVSoutflow}
0 \ge \frac{d}{dt}m(A_{s,t}) \ge -\int_{\partial A_t}\left\langle v(t,x)-w(t,x),n_t(x)\right\rangle^+ d\sigma(x)\,.
\end{equation}
\end{proposition}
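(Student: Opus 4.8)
The monotonicity and the upper bound $\tfrac{d}{dt}m(A_{s,t})\le 0$ come for free: for $s\le t\le t'$ the survivor sets are nested, $A_{s,t'}=\bigcap_{r=s}^{t'}\phi_{r,s}A_r\subseteq\bigcap_{r=s}^{t}\phi_{r,s}A_r=A_{s,t}$, so $t\mapsto m(A_{s,t})$ is non-increasing (and measurable, since Assumption~\ref{ass:boundary} makes $\{A_\theta\}$ nice, cf.\ Proposition~\ref{prop:nice=measurable}) and thus has non-positive derivative wherever it is differentiable. The real content is the lower bound, and my plan is to rewrite the instantaneous loss as the measure of the set of points that escape the moving family during a short window $(t,t+h]$, and then to estimate that set by the outflow flux through $\partial A_t$ up to an error $o(h)$.

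For the first reduction I would fix a point $t$ of differentiability and $h>0$. By the nesting, $m(A_{s,t})-m(A_{s,t+h})=m(A_{s,t}\setminus A_{s,t+h})$, and a point $x$ in this difference is precisely one whose forward trajectory stays in the family up to time $t$ but leaves it at some $r\in(t,t+h]$, so its image $\phi_{s,t}x$ lies in the short-time escape set
\[
E_t(h):=\bigl\{\,y\in A_t : \phi_{t,r}y\notin A_r\ \text{for some}\ r\in(t,t+h]\,\bigr\}.
\]
Here divergence-freeness enters: by Liouville's theorem $\phi_{s,t}$ preserves Lebesgue measure, so $m(A_{s,t})-m(A_{s,t+h})\le m(E_t(h))$, and hence at the point of differentiability $-\tfrac{d}{dt}m(A_{s,t})=\lim_{h\downarrow0}\tfrac1h\bigl(m(A_{s,t})-m(A_{s,t+h})\bigr)\le\limsup_{h\downarrow0}\tfrac1h\,m(E_t(h))$. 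It then suffices to show $\limsup_{h\downarrow0}\tfrac1h\,m(E_t(h))\le\int_{\partial A_t}\langle v(t,x)-w(t,x),n_t(x)\rangle^+\,d\sigma(x)$.

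For this flux estimate I would pass to the co-moving frame of $\partial A_t$. Using the parametrization $a(t,\cdot):R\to\partial A_t$ from Assumption~\ref{ass:boundary}, introduce boundary-normal coordinates $y=a(t,r)-\rho\,n_t(a(t,r))$, with $\rho\ge 0$ the depth inside $A_t$, and straighten the nearby sets $A_r$ — whose boundaries are $a(r,\cdot)$ over the same parameter domain $R$ — so that they all coincide with $A_t$; in this frame the trajectory $\phi_{t,r}y$ is driven, to first order, by the \emph{relative} velocity $v-w$, whose outward normal component on $\partial A_t$ is $\langle v-w,n_t\rangle$. Boundedness of $v$ and $w$ together with the regularity of $\{A_r\}$ confines $E_t(h)$ to an $O(h)$-thin one-sided neighbourhood of $\partial A_t$, and the first-order expansion of the depth, $\rho-(r-t)\langle v(t,a(t,r))-w(t,a(t,r)),n_t\rangle+o(r-t)$, forces $y\in E_t(h)$ only when $\rho\le h\bigl(\langle v-w,n_t\rangle^+ +\omega(h)\bigr)$ with $\omega(h)\to 0$. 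Integrating $\rho$ against $d\sigma$ and sending $h\downarrow0$ yields the desired bound, and combining it with the previous step proves the Proposition.

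The hard part is entirely in this last step, which is why I expect only a sketch to be appropriate: one must build the straightening of $\{A_r\}$ from a merely piecewise-smooth parametrization; control the Euler-step errors of both the flow map and the moving boundary uniformly enough to survive division by $h$; handle the finitely many non-smooth loci of $a(t,\cdot)$ and the curvature corrections of the tubular neighbourhood; and account for trajectories that leave some $A_r$ and re-enter (harmless, since $E_t(h)$ over-counts, but this is exactly why the estimate is one-sided). To show the inequality is genuinely strict in general I would include the elementary one-dimensional example $v\equiv 0$ on $X=[0,2]$ with the sliding interval $A_t=[f(t),f(t)+1]$, where $f$ is smooth and $\tau$-periodic with $\min f=0$ and $\max f=\tfrac12$: the survivor set stabilizes to $[\tfrac12,1]$, so $\tfrac{d}{dt}m(A_{s,t})=0$ for large $t$, while the outflow flux at such times equals $|f'(t)|\not\equiv 0$ — the outflow leaves through the purely transient part of $\partial A_t$ and hence does not diminish the survivor set.
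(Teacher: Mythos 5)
Your sketch is correct and follows essentially the same route as the paper, which itself gives only an ``idea of the proof'' on the grounds that the full argument is cumbersome: both bound the instantaneous loss of the survivor set by the measure of a short-time escape set near $\partial A_t$ (using volume preservation to move between anchor time $s$ and time $t$) and observe that the flux integral over all of $\partial A_t$ over-counts, since only the portion of the boundary shared with $\phi_{s,t}A_{s,t}$ actually contributes to the loss. Your strictness example plays the same role as the paper's Example~\ref{ex:outflow}.
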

\begin{proof}[Idea of the proof.]
Proposition~\ref{prop:survivalVSoutflow} essentially states that the rate at which the survivor set loses measure is at most the outflow flux. To see this, note that~$A_{s,t}$ will decrease in measure if $\phi_{s,r}A_{s,t} \nsubseteq A_r$ for some~$r>t>s$. Considering times only infinitesimally greater than~$t$, this means that~$\phi_{s,t}A_{s,t}$ and~$A_t$ have to share a part of their boundary where the integrand in~\eqref{eq:survivalVSoutflow} is positive, for~$m(A_{s,t})$ to decrease. Since the right-hand side of~\eqref{eq:survivalVSoutflow} does not take this into account (it integrates over the whole boundary of~$A_t$ not just over the common part with~$\phi_{s,t}A_{s,t}$), it overestimates the decay of the volume of the survivor set. The second inequality in~\eqref{eq:survivalVSoutflow} becomes an equality if every~$x\in\partial A_t$ where~$\left\langle v(t,x)-w(t,x),n_t(x)\right\rangle \ge 0$ is also a boundary point of~$\phi_{s,t}A_{s,t}$.
\end{proof}

\begin{example} \label{ex:outflow}
Let $X = S^1$, $v(t,x) = 0.3\cos(t)$, and $A_t = [0.3+0.2\sin(t),0.7+0.2\sin(t)]\subset X$. Then, for $t\in[0,\pi/2]$ trajectories leave~$A_t$ at the upper boundary, and $A_{0,t} = [0.3,0.7-0.1\sin(t)]$. For~$t\in[\pi/2,\pi]$, there is no loss in the measure of the survivor set~$A_{0,t}$, because~$\phi_{0,t}A_{0,\pi/2} \subset A_t$. For~$t>\pi$, trajectories which initially started in~$A_0$ may leave~$A_t$ on its lower boundary. The survivor sets and their images are shown in Figure~\ref{fig:example1}. Note that for~$\pi/2<t<\pi$ no trajectory that initially started in~$A_0$ leaves~$\{A_r\}$, because for these times the blue and grey regions do not share a common boundary. Nevertheless, the integral in~\eqref{eq:survivalVSoutflow} is positive, hence overestimates the decay of the survivor set, showing a strict inequality.

\begin{figure}[!htb]
\centering
\includegraphics[scale=1]{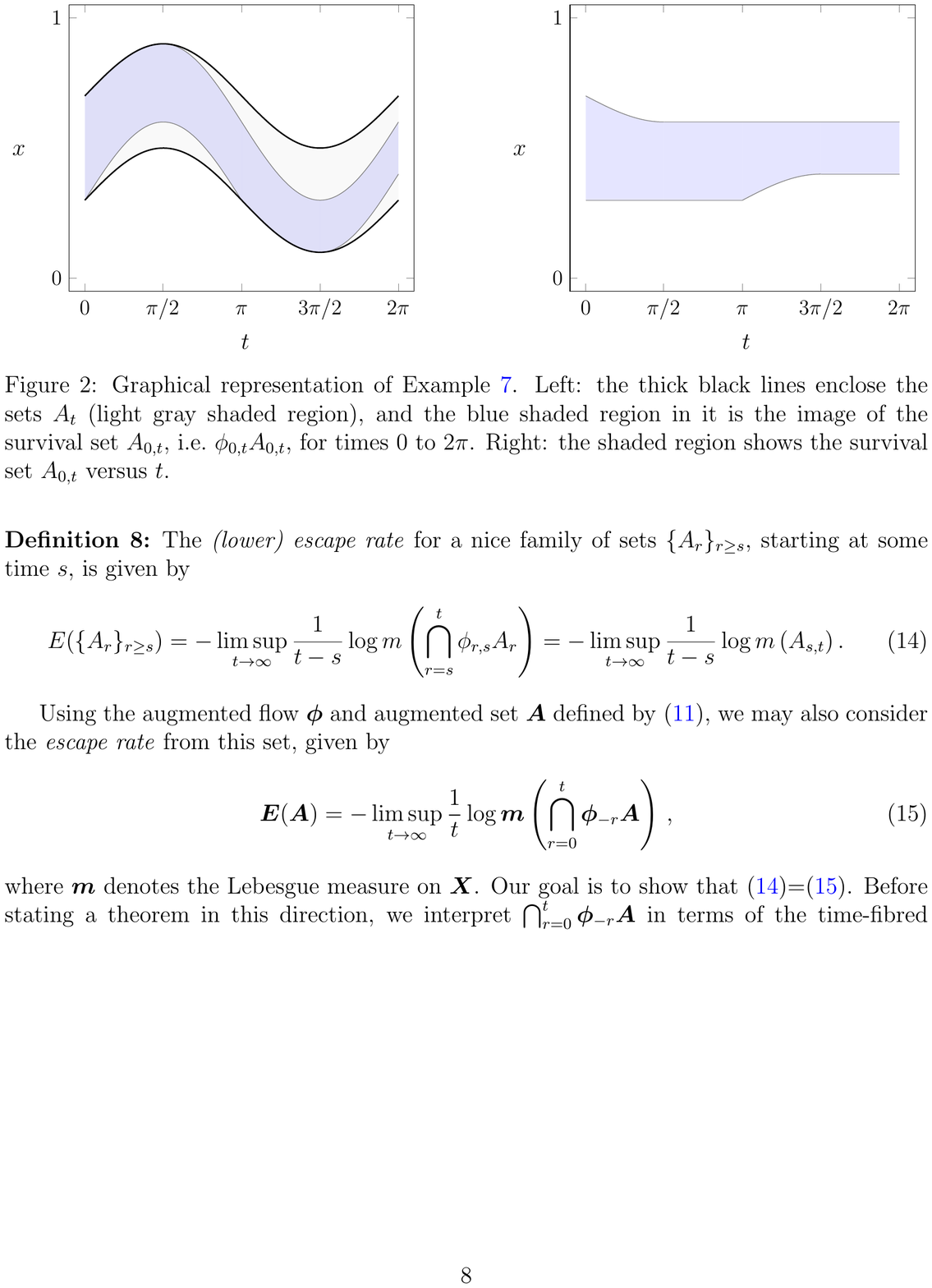}

\caption{Graphical representation of Example~\ref{ex:outflow}. Left: the thick black lines enclose the sets~$A_t$ (light gray shaded region), and the blue shaded region in it is the image of the survivor set~$A_{0,t}$, i.e.~$\phi_{0,t}A_{0,t}$, for times~$0$ to~$2\pi$. Right: the shaded region shows the survivor set~$A_{0,t}$ versus~$t$.}
\label{fig:example1}
\end{figure}

\end{example}

\subsection{Escape rates} \label{ssec:escape}

Since we think of the system running for large times (many periods), we are going to look at time-asymptotic quantities, as~$t\to\infty$.

Recall Proposition~\ref{prop:nice=measurable}, stating that the survivor set is measurable. Thus, we can make the following definition for escape rate in continuous time.
\begin{definition}
\label{def:lowerescaperate}
The \emph{(lower) escape rate} for a nice family of sets $\{A_r\}_{r\ge s}$, starting at some time~$s$, is given by
\begin{equation}
E(\{A_r\}_{r\ge s}) = -\limsup_{t\to\infty} \frac{1}{t-s} \log m\left(\bigcap_{r=s}^t\phi_{r,s}A_r\right)= -\limsup_{t\to\infty} \frac{1}{t-s} \log m\left(A_{s,t}\right).
\label{eq:escrate}
\end{equation}
\end{definition}

Using the augmented flow~$\aug{\phi}$ and augmented set~$\aug{A}$ defined by~\eqref{eq:augset}, we may also consider the \emph{escape rate} from this set, given by
\begin{equation}
\aug{E}(\aug{A}) = -\limsup_{t\to\infty}\frac{1}{t} \log \aug{m} \left(\bigcap_{r=0}^t\aug{\phi}_{-r}\aug{A}\right)\,,
\label{eq:augescrate}
\end{equation}
where~$\aug{m}$ denotes the Lebesgue measure on~$\aug{X}$.
Our goal is to show that \eqref{eq:escrate}=\eqref{eq:augescrate}.
Before stating a theorem in this direction, we interpret~$\bigcap_{r=0}^t\aug{\phi}_{-r}\aug{A}$ in terms of the time-fibred representation:
\begin{eqnarray}
\nonumber\bigcap_{r=0}^t \aug{\phi}_{-r}\aug{A} = \bigcap_{r=0}^t \aug{\phi}_{-r} \left(\bigcup_{\theta=0}^\tau\{\theta\}\times A_{\theta}\right)&=&
\bigcap_{r=0}^t  \bigcup_{\theta=0}^\tau\left(\{\theta-r\}\times{\phi}_{\theta,\theta-r} A_{\theta}\right)\\
&=&\nonumber\bigcup_{\theta'=0}^\tau \{\theta'\}\times \bigcap_{\substack{r=0\\ \theta=\theta'+r}}^t\phi_{\theta,\theta-r}A_{\theta}\\
&\stackrel{r'=\theta'+r}{=}&\nonumber\bigcup_{\theta'=0}^\tau \{\theta'\}\times \bigcap_{r'=\theta'}^{t+\theta'}\phi_{r',\theta'}A_{r'}\\
&=& \bigcup_{\theta'=0}^\tau \{\theta'\}\times A_{\theta',\theta'+t}\,,
\label{eq:remainingset}
\end{eqnarray}
where the second line is obtained by noting that intersections of the augmented sets are non-empty only if $\theta-r = \theta' = \text{const}$. Thus, the set $\bigcap_{r=0}^t\aug{\phi}_{-r}\aug{A}$ in augmented space is the natural augmentation (via a time-union) of the fibre-wise survivor sets~$A_{\theta',\theta'+t}$.


\begin{remark} \label{rem:Poincare}
It is natural to ask whether the continuous time escape rates we consider here are also connected to escape rates for the Poincar\'e return map of the dynamics.
The latter would be defined by a discrete-time version of~\eqref{eq:escrate}, namely:
 \begin{equation}
\label{eq:discrete_escrate}
-\limsup_{n\to\infty} \frac{1}{n\tau}\log m\left(\bigcap_{k=0}^{n-1} \phi_{s+k\tau,s}A_{s+k\tau}\right).
\end{equation}
Clearly, because the set formed by the discrete intersection in~\eqref{eq:discrete_escrate} contains the set formed by the continuous intersection in~\eqref{eq:escrate}, the escape rate for the Poincar\'e return map will be slower than for the continuous flow.
See also~\cite[section 3.3]{FrJuKo13} for a discussion of escape rates of flow maps in both the deterministic and stochastic cases.
\end{remark}

Our first result regarding escape rates is that $E(\{A_r\}_{r\ge s})$ is independent of~$s$.

\begin{theorem}\label{thm:escrate_indep}
For every $s_1,s_2\in \tau S^1$ one has
\[
E(\{A_r\}_{r\ge s_1}) = E(\{A_r\}_{r\ge s_2})\,.
\]
\end{theorem}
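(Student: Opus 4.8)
The plan is to show that the limsup defining the escape rate is the same regardless of the anchor time $s$, by relating the survivor sets $A_{s_1,t}$ and $A_{s_2,t'}$ through the flow map. Without loss of generality assume $s_1 < s_2$ (the circle is parametrized by $[0,\tau)$, and periodicity of the family $\{A_r\}$ lets us always reduce to this case). The key observation is that the flow map $\phi_{s_1,s_2}$ carries the survivor set at anchor time $s_1$ into the survivor set at anchor time $s_2$, up to the finitely-many constraints imposed by the sets $A_r$ for $r\in[s_1,s_2]$. Concretely, for $t > s_2$ one has
\[
\phi_{s_1,s_2}\bigl(A_{s_1,t}\bigr) \;=\; \bigcap_{r=s_1}^{t}\phi_{r,s_2}A_r \;=\; A_{s_2,t}\cap\bigcap_{r=s_1}^{s_2}\phi_{r,s_2}A_r \;\subseteq\; A_{s_2,t}\,,
\]
and symmetrically $\phi_{s_2,s_1}(A_{s_2,t+\tau}) \subseteq A_{s_1,t+\tau}$ (using periodicity to bring the extra constraints $r\in[s_2,s_1+\tau]$ back into the family). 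Thus each survivor set at one anchor time is sandwiched, after applying a fixed flow map, between survivor sets at the other anchor time over a time window that differs only by the fixed amount $s_2-s_1 \le \tau$.

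The next step is to compare measures. Since $X$ is compact and $v$ is continuous (hence the flow maps $\phi_{s_1,s_2}$, $\phi_{s_2,s_1}$ are Lipschitz on the compact state space, being solutions of an ODE with continuous — and, where needed, bounded — right-hand side over a bounded time interval), there is a constant $C\ge 1$, depending only on $v$ and $\tau$ but not on $t$, such that
\[
C^{-1}\, m(B) \;\le\; m\bigl(\phi_{s_1,s_2}B\bigr) \;\le\; C\, m(B)
\]
for every measurable $B\subseteq X$. Applying this to the inclusions above gives
\[
C^{-1}\, m(A_{s_1,t}) \;\le\; m(A_{s_2,t}) \qquad\text{and}\qquad C^{-1}\, m(A_{s_2,t+\tau}) \;\le\; m(A_{s_1,t+\tau})\,,
\]
i.e.\ $m(A_{s_1,t})$ and $m(A_{s_2,t})$ agree up to a multiplicative constant and a bounded shift in $t$. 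Taking $\tfrac1{t-s}\log$ of both sides, the $\log C$ terms and the $O(\tau)$ shift in the denominator vanish in the limit $t\to\infty$, so the two limsups coincide, which is exactly $E(\{A_r\}_{r\ge s_1}) = E(\{A_r\}_{r\ge s_2})$.

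The one point needing care — and the main (minor) obstacle — is the uniform measure-distortion bound: one must ensure the flow maps $\phi_{s_1,s_2}$ are bi-Lipschitz with a constant independent of $t$, which follows because the time window $[s_1,s_2]\subset[0,\tau]$ is fixed and the vector field is continuous on the compact set $\tau S^1\times X$; if $v$ is only continuous rather than $C^1$ one can still invoke that the flow is a homeomorphism and control measure via a change-of-variables or Grönwall-type argument, or simply assume enough regularity as is implicit in the setting. A second bookkeeping detail is keeping the index arithmetic on the circle consistent when wrapping constraints from $[s_2,s_1+\tau]$ back around using $A_r = A_{r\bmod\tau}$; this is routine given the periodicity convention already fixed before Proposition~\ref{prop:nice=measurable}. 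With these in hand the argument is complete.
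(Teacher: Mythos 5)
Your argument is correct and is essentially the paper's own proof: the same splitting of the survivor-set intersection to get $\phi_{s_1,s_2}(A_{s_1,t})\subseteq A_{s_2,t}$, the same uniform two-sided bound on measure distortion by the flow maps over a time window of length at most $\tau$ (the paper's Lemma~\ref{lem:trafo} with $J_{\min},J_{\max}$ playing the role of your $C$), and the same use of periodicity to wrap around once and obtain the reverse inequality. The only slip is in your displayed ``symmetric'' inclusion, which with the paper's convention $\phi_{s_2,s_1}=(\phi_{s_1,s_2})^{-1}$ actually reverses; it should be the forward wrap-around map, $\phi_{s_2,s_1+\tau}(A_{s_2,t+\tau})\subseteq A_{s_1+\tau,t+\tau}=A_{s_1,t}$, exactly the bookkeeping you flag at the end.
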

\begin{proof}
See Appendix~\ref{app:indeptime}.
\end{proof}
To express the independence of the starting time we write from now on $E(\{A_r\})$ for the escape rate from the family $\{A_r\}$ of sets.
Our second result on escape rates is that the two definitions~\eqref{eq:escrate} and~\eqref{eq:augescrate} are equivalent.
\begin{theorem}\label{thm:escrate_nonauto_aug}
If~$\{A_r\}\subset X$ is a nice family of sets and~$v:\tau S^1\times X\to \mathbb{R}^d$ is a continuous, time-periodic vector field, then
\[
E\left(\{A_r\}\right) = \aug{E}\left(\aug{A}\right)\,,
\]
\end{theorem}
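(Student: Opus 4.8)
The plan is to exploit the explicit identity~\eqref{eq:remainingset}, which decomposes the backward-iterated augmented set as a time-union of the fibre-wise survivor sets:
\[
\bigcap_{r=0}^t\aug{\phi}_{-r}\aug{A} \;=\; \bigcup_{\theta=0}^\tau \{\theta\}\times A_{\theta,\theta+t}\,.
\]
By Fubini's theorem on the product space $\aug{X}=\tau S^1\times X$, the augmented Lebesgue measure of the left-hand side equals $\int_0^\tau m(A_{\theta,\theta+t})\,d\theta$. Therefore
\[
\aug{E}(\aug{A}) \;=\; -\limsup_{t\to\infty}\frac{1}{t}\log\!\int_0^\tau m(A_{\theta,\theta+t})\,d\theta\,,
\]
and the task reduces to comparing the asymptotic exponential decay rate of the \emph{average} $\int_0^\tau m(A_{\theta,\theta+t})\,d\theta$ with that of $m(A_{s,t})$ for a fixed anchor $s$. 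The key external input is Theorem~\ref{thm:escrate_indep}: the fibre-wise escape rate $E(\{A_r\}_{r\ge\theta}) = -\limsup_{t\to\infty}\frac1t\log m(A_{\theta,\theta+t})$ is the same number, call it $E$, for every $\theta$.

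First I would establish the easy inequality $\aug{E}(\aug{A}) \le E$. Since $m(A_{\theta,\theta+t})\le m(X)$ for all $\theta$, we get $\int_0^\tau m(A_{\theta,\theta+t})\,d\theta \ge m(A_{s,t})$ is false in general, so instead I argue the other direction: $\int_0^\tau m(A_{\theta,\theta+t})\,d\theta \le \tau\cdot m(X)$ trivially bounds nothing useful, so the right comparison is $\int_0^\tau m(A_{\theta,\theta+t})\,d\theta \le \tau \sup_\theta m(A_{\theta,\theta+t})$. Taking $\limsup\frac1t\log$ and using that the $\limsup$ of a supremum over a compact parameter set of quantities all decaying at rate $E$ is still governed by $E$ — this needs a short uniformity argument — yields $\aug{E}(\aug{A})\ge E$. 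For the reverse inequality $\aug{E}(\aug{A})\le E$, I would use a lower bound: pick the anchor time $\theta_0$ (in fact any will do) and note $\int_0^\tau m(A_{\theta,\theta+t})\,d\theta \ge \int_{\theta_0-\delta}^{\theta_0+\delta} m(A_{\theta,\theta+t})\,d\theta$; by the niceness/continuity of the family and a semicontinuity-in-$\theta$ property of survivor-set measures (which follows from Proposition~\ref{prop:nice=measurable} together with continuity of the flow $\phi_{r,\theta}$ in all arguments), $m(A_{\theta,\theta+t})$ is not too much smaller than $m(A_{\theta_0,\theta_0+t})$ on a small interval — or, more robustly, one uses that $\phi_{\theta_0,\theta}A_{\theta_0,\theta_0+t-\delta}$ survives in $\{A_r\}$ from time $\theta$ to time $\theta_0+t-\delta$ for $|\theta-\theta_0|\le\delta$, hence $m(A_{\theta,\theta+t})\gtrsim m(A_{\theta_0,\theta_0+t})$ up to a flow-distortion factor that is $O(1)$ uniformly in $t$. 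This gives $\int_0^\tau m(A_{\theta,\theta+t})\,d\theta \ge c\, m(A_{\theta_0,\theta_0+t})$ with $c>0$ independent of $t$, hence $\aug{E}(\aug{A})\le E$.

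The main obstacle is the uniformity in $\theta$ needed in both directions: a priori Theorem~\ref{thm:escrate_indep} only asserts that the pointwise limits (limsups) agree, but the decay estimates $m(A_{\theta,\theta+t})\approx e^{-Et}$ could fail to hold uniformly over $\theta\in\tau S^1$, and the integral $\int_0^\tau m(A_{\theta,\theta+t})\,d\theta$ is sensitive to the worst and best fibres. The cleanest fix is the comparison argument I sketched: for $|\theta-\theta'|<\delta$ one has the sandwich $\phi_{\theta',\theta}A_{\theta',\theta'+t} \subseteq \bigcap_{r=\theta}^{\theta'+t}\phi_{r,\theta}A_r$ up to controlling the at-most-$2\delta$-long stretches of time near the two endpoints where the index sets of the two intersections differ, and each such stretch only multiplies the measure by a factor bounded away from $0$ and $\infty$ uniformly in $t$ (because $\phi$ over a bounded time interval has bounded Jacobian on compact $X$, and because $A_r$ has nonempty interior so does not instantaneously annihilate the set). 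Propagating this around the circle $\tau S^1$ by a finite chain of overlapping $\delta$-intervals shows all $m(A_{\theta,\theta+t})$ are comparable up to a $t$-independent constant, which immediately collapses both inequalities and gives $\aug{E}(\aug{A})=E=E(\{A_r\})$. I would expect the bulk of the write-up to be this comparison lemma; the Fubini step and the final assembly are routine.
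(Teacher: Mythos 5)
Your proposal follows essentially the same route as the paper's proof: the identity~\eqref{eq:remainingset} plus Fubini reduces $\aug{E}(\aug{A})$ to the decay rate of $\int_0^\tau m(A_{\theta,\theta+t})\,d\theta$, and the paper then sandwiches the integrand between $J_{\max}^{-1}m(A_{0,\tau+t})$ and $J_{\max}\,m(A_{\tau,t})$ using the one-sided distortion bound of Corollary~\ref{cor:measbound} (i.e.\ the decomposition $A_{s_1,t}=A_{s_1,s_2}\cap\phi_{s_2,s_1}A_{s_2,t}$ together with the uniform Jacobian bounds of the flow over one period) and then invokes Theorem~\ref{thm:escrate_indep} --- precisely the uniformity mechanism you identify. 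One caution about your closing ``comparison lemma'': it asserts \emph{two-sided} comparability of $m(A_{\theta,\theta+t})$ across fibres, but the direction $m(A_{\theta',\theta'+t})\ge c\,m(A_{\theta,\theta+t})$ for $\theta'>\theta$ does not follow from bounded Jacobians --- the mass surviving to time $\theta+t$ may almost entirely escape during the extra stretch $[\theta+t,\theta'+t]$, and ``$A_r$ has nonempty interior'' yields no multiplicative lower bound. This over-claim is harmless because it is not needed: for the upper bound on the integral one compares every fibre to the single anchor $\tau$ via $m(A_{\theta,\theta+t})\le J_{\max}\,m(A_{\tau,\theta+t})\le J_{\max}\,m(A_{\tau,t})$, and for the lower bound to the anchor $0$ via $m(A_{\theta,\theta+t})\ge J_{\max}^{-1}m(A_{0,\theta+t})\ge J_{\max}^{-1}m(A_{0,\tau+t})$, using monotonicity of survivor sets in the terminal time. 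Finally, in your ``more robust'' inclusion the set that sits \emph{inside} $A_{\theta,\theta+t}$ for $\theta\in[\theta_0,\theta_0+\delta]$ is $\phi_{\theta_0,\theta}A_{\theta_0,\theta_0+t+\delta}$ (survival for slightly \emph{longer}), not $\phi_{\theta_0,\theta}A_{\theta_0,\theta_0+t-\delta}$; the shift by $\delta$ does not affect the limsup rate, so this is a cosmetic fix.
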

\begin{proof}
See Appendix~\ref{app:augrate}.
\end{proof}

\section{Time-asymptotic escape rates for SDEs from a time-periodic family of sets }
\label{sec:cohfam}

Many processes in nature are modeled through dynamics perturbed by random noise and
we wish to extend our results to this case as well.
Unfortunately, the mathematical framework presented in Theorem~\ref{thm:1=2} is not suitable for handling this situation, because the outflow flux induced by the stochastic forcing is infinite. To see this, note that the mean displacement of a standard Brownian motion in a time interval of length~$dt$ is~$\sqrt{dt}$. The volume of the portion of state space that can leave~$A_t$ in this time interval is thus potentially of order~$\sqrt{dt}$, giving an instantaneous flux of order~$(dt)^{-1/2}$, which diverges as~$dt\to 0$. 
On the other hand, escape rates can easily be generalized to cope with the stochastic situation.


To facilitate presentation, we assume from now on that the vector field at hand is divergence-free, thus the (deterministic) flow~$\phi_{s,t}$ preserves the Lebesgue measure, meaning that $m\circ\phi_{t,s} = m$ for every $t\ge s$.
We will need the following regularity assumptions:
\begin{assumption} \label{ass:regularity}
\quad
\begin{enumerate}[(a)]
\item The state space~$X$ is a bounded domain in~$\R^d$, uniformly regular of class~$C^4$. In particular, its boundary is a~$C^4$-manifold.
\item The vector field is continuously differentiable on the closure of~$X$ for all times, i.e.~$v(\theta,\cdot)\in C^1(\overline{X},\R^d)$, and it is continuously differentiable in time, i.e.\ the mapping~$\theta\mapsto v(\theta,x)$ is in~$C^1(\tau S^1,\R^d)$ for every~$x\in \overline{X}$. Further,~$v$ is tangential to~$\partial X$ on the boundary~$\partial X$ for all times.
\end{enumerate}
\end{assumption}

From now on we will consider small random perturbations of the deterministic dynamics. These are stochastic processes $\{x_t\}_{t\ge s}$ governed by the It\^o differential equation
\begin{equation}
dx_t = v(t,x_t)dt + \ep dw_t,
\label{eq:SDE}
\end{equation}
where $\{w_t\}_{t\ge s}$ is a $d$-dimansional standard Wiener process~\cite{Oks98}, and~$\ep>0$ is a parameter which is assumed to be small compared with the magnitude of the vector field.\footnote{For simplicity, we are only considering noise with constant coefficients. Nevertheless, all the following statements hold true if~$\ep$ is replaced by~$\sigma(t,x)$, such that~$\sigma\in C^1(\tau S^1\times\overline{X},\R^{d\times d})$, and~$\sigma(\theta,x)$ is uniformly positive definite in~$\theta$ and~$x$.} Also, we take reflecting boundary conditions, i.e.\ no realizations of the process~$\{x_t\}$ are allowed to leave~$X$ at any times.

Escape rates for SDEs~\cite{WilhelmHuisingaSeanMeyn2004} are defined through the probability distribution of the process:
\begin{definition} \label{def:escrate}
The \emph{(lower) escape rate} for a nice family of sets $\{A_r\}_{r\ge s}$, starting at some time~$s$, is given by
\begin{eqnarray*}
E(\{A_r\}_{r\ge s}) & = & -\limsup_{t\to\infty} \frac{1}{t-s} \log \prob_{x_s\sim m}\left(x_{r}\in A_r\ \forall r\in[s,t]\right)\\
&=&-\limsup_{t\to\infty} \frac{1}{t-s} \log \prob_{x_s\sim m}\left(\bigcap_{r\in[s,t]} \{\omega\,\vert\, x_r(\omega)\in A_r\}\right).
\end{eqnarray*}
where $m$ denotes the Lebesgue measure,~$x_s\sim m$ means that the random variable~$x_s$ has distribution~$m$, and~$\omega$ represents a realization of the noise process from a probability space which we do not specify here further.
\end{definition}
The existence of $E(\{A_r\})$ requires the $\prob$-measurability of the event
\[
\bigcap_{r\in[s,t]} \{\omega\,\vert\, x_r(\omega)\in A_r\}\,.
\]
One can follow the proof of Proposition~\ref{prop:nice=measurable}, using the continuity of sample paths\footnote{i.e.\ that $t\mapsto x_t$ is a continuous function for $\prob$-a.e.\ $\omega$; cf.~\cite{Oks98}.} to obtain the measurability of $\mathcal{E}:=\bigcap_{r\in[s,t]} \{\omega\,\vert\, x_r(\omega)\in A_r\}$ for a family $\{A_r\}$ of nice sets.

If the vector field is time-independent, the process $\{x_r\}$ is \emph{(time-)homogeneous} (i.e.\ $\prob_{x_s=y}(x_t\in A) = \prob_{x_{s+r}=y}(x_{t+r}\in A)$ for every $r\ge 0$, $s\le t$, and $A\in\mathfrak{B}$). Here, and in the following, $\mathfrak{B}$ denotes the Borel sigma algebra on~$X$. Then, the escape rate from a single closed set $A\in\mathfrak{B}$ is well defined,
\begin{eqnarray*}
E(A)&=& -\limsup_{s\to\infty} \frac{1}{t-s} \log \prob_{x_s\sim m}\left(x_{r}\in A\ \forall r\in[s,t]\right)\\
&=&-\limsup_{s\to\infty} \frac{1}{t-s} \log \prob_{x_s\sim m}\left(\bigcap_{r\in[s,t]} \{\omega\,\vert\, x_r(\omega)\in A\}\right).
\end{eqnarray*}

Before we go on to find families of sets with low escape rates, we generalize the result of Theorem~\ref{thm:escrate_nonauto_aug} to the noisy dynamical setting.
To this end, note that the non-homogeneous SDE~$dx_t = v(t,x_t)dt + \ep dw_t$ can be made homogeneous by augmenting the state space, exactly as before:
\begin{equation}
\begin{aligned}
d\theta_t &= dt,\\
dx_t &= v(\theta_t,x_t)dt + \ep dw_t,
\end{aligned}
\label{eq:fullSDE}
\end{equation}
or, alternatively,
\begin{equation}
d\aug{x}_t = \aug{v}(\aug{x}_t)dt + \aug{\ep}d\aug{w}_t
\label{eq:augSDE}
\end{equation}
where the augmented state~$\aug{x}$ and vector field~$\aug{v}$ are as before,~$\{\aug{w}_t\}$ is a~$(d+1)$-dimensional standard Wiener process, and
\[
\aug{\ep} = \begin{pmatrix}
0 & 0\\
0 & \ep I_{d\times d}
\end{pmatrix}\in\R^{(d+1)\times(d+1)},
\]
with~$I_{d\times d}\in\R^{d\times d}$ being the identity matrix.

\begin{proposition} \label{prop:escapeduality}
Let~$\{A_r\}_{r\in\tau S^1}$ be a family of sets and~$\aug{A}\subset\tau S^1\times X$ the associated augmented set, cf.~\eqref{eq:augset}. Let~$E(\{A_r\})$ denote the escape rate of the process~$\{x_r\}_{r\ge s}$ governed by~\eqref{eq:SDE} from the family~$\{A_r\}$ of sets, as defined in Definition~\ref{def:escrate}. Let~$\aug{E}(\aug{A})$ denote the escape rate of the associated augmented process~$\{\aug{x}_r\}$ governed by~\eqref{eq:augSDE} from the augmented set~$\aug{A}$. Then, we have
\begin{equation*}
E(\{A_r\}) = \aug{E}(\aug{A}).
\end{equation*}
\end{proposition}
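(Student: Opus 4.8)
The plan is to reduce the stochastic statement to a fibre-wise computation that mirrors the deterministic identity~\eqref{eq:remainingset}, then match survival probabilities on the two sides. The key observation is that the augmented process~$\{\aug{x}_t\} = \{(\theta_t,x_t)\}$ governed by~\eqref{eq:fullSDE} has a \emph{deterministic} first component, $\theta_t = \theta_0 + t \bmod \tau$, and that, conditioned on the initial phase~$\theta_0 = s$, the $x$-component of the augmented process is exactly the original non-homogeneous process~$\{x_t\}$ started at time~$s$. First I would make this precise: write $\aug{m} = m_{\tau S^1} \otimes m$ where $m_{\tau S^1}$ is normalized Lebesgue measure on the circle (of total mass~$\tau$, or normalized to~$1$; fix one convention), so that sampling $\aug{x}_0 \sim \aug{m}$ amounts to sampling $\theta_0 \sim m_{\tau S^1}$ and, independently, $x_0 \sim m$.

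Next I would rewrite the augmented survival event. Since $\theta_t = \theta_0 + t$, the condition $\aug{x}_r \in \aug{A}$ for all $r\in[0,t]$ is, fibre-by-fibre, the condition $x_r \in A_{\theta_0 + r}$ for all $r\in[0,t]$ — this is precisely the stochastic analogue of the set-theoretic identity derived in~\eqref{eq:remainingset}. Therefore, by Fubini (justified by the $\prob\otimes\aug m$-measurability of the survival event, which follows from continuity of sample paths exactly as remarked after Definition~\ref{def:escrate}),
\[
\aug{m}\text{-average survival probability up to time } t
= \frac{1}{\tau}\int_{0}^{\tau} \prob_{x_s\sim m}\!\left(x_r \in A_r \ \forall r\in[s,s+t]\right) ds
=: \frac{1}{\tau}\int_0^\tau p_s(t)\, ds,
\]
where $p_s(t)$ is exactly the quantity whose exponential decay rate defines $E(\{A_r\}_{r\ge s})$ in Definition~\ref{def:escrate}. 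So $\aug E(\aug A) = -\limsup_{t\to\infty}\tfrac1t \log \big(\tfrac1\tau\int_0^\tau p_s(t)\,ds\big)$.

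It then remains to show this equals $E(\{A_r\})$. By periodicity of the vector field and the boundary conditions, the process started at phase $s$ and the process started at phase $s+k\tau$ have the same law, so each $p_s(t)$ is (up to the $s$-dependence within one period) controlled by the decay rate $E(\{A_r\})$, which by the stochastic analogue of Theorem~\ref{thm:escrate_indep} is independent of~$s$. (If Theorem~\ref{thm:escrate_indep} has only been stated in the deterministic setting, I would note that its proof — comparison of survivor probabilities over shifted anchor times using niceness and continuity — transfers verbatim, or invoke that $p_s(t)$ and $p_{s'}(t)$ differ only by flowing the density from phase $s$ to phase $s'$ over a bounded time, which changes $\log p$ by $O(1)$.) Since a finite average of functions each decaying like $e^{-E t + o(t)}$ also decays like $e^{-E t + o(t)}$ — the $\limsup$ of $\tfrac1t\log$ of a sum of finitely many nonnegative terms equals the max of the individual $\limsup$s, and here they all coincide — we get $\aug E(\aug A) = E(\{A_r\})$.

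The main obstacle I anticipate is not the algebra but the measurability/integrability bookkeeping needed to apply Fubini cleanly: one must check that $(s,\omega)\mapsto \mathbbm{1}[x_r(\omega)\in A_r\ \forall r\in[s,s+t]]$ is jointly measurable (using the nice-set hypothesis and path continuity, reducing the uncountable intersection over $r$ to a countable dense one as in Proposition~\ref{prop:nice=measurable}), and, if one wants $E(\{A_r\})$ genuinely independent of the anchor~$s$, that the shift-comparison estimate is uniform in~$s$ over the compact circle. A secondary subtlety is reconciling the "$\aug m$" normalization and the reflecting boundary conditions on~$X$ versus the periodic gluing in~$\theta$, so that the augmented SDE~\eqref{eq:augSDE} is genuinely well-posed with the stationary reference measure $\aug m$; this is routine given Assumption~\ref{ass:regularity} but should be stated.
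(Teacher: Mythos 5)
Your proposal is correct and follows essentially the same route as the paper: identify the law of the augmented process fibre-by-fibre with that of the original process started at phase~$s$, disintegrate~$\aug{m}$ over the initial phase, and invoke independence of the escape rate on the anchor time. The only imprecision is describing the phase average as a sum of \emph{finitely many} terms --- it is an integral over $s\in[0,\tau)$ --- but your uniform shift-comparison remark supplies exactly the two-sided sandwich bound needed to handle that, which is also how the paper treats the analogous integral in the proof of Theorem~\ref{thm:escrate_nonauto_aug}.
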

\begin{proof}
By~\eqref{eq:fullSDE} and~\eqref{eq:augSDE} we can identify~$x_r$ with the last~$d$ coordinates of~$\aug{x}_r$, and the time~$r$ with the first coordinate of~$\aug{x}_r$. Thus, we can also identify the probability laws of these two processes (given~$x_s\sim m$ and~$\aug{x}_s\sim \delta_s\otimes m$). With
\[
x_r\in A_r\quad \Longleftrightarrow \quad \aug{x}_r \in \{r\}\times A_r \subset \aug{A}
\]
the claim follows from the fact that for time-periodic forcing the escape rates are independent on starting time, and that the time-fibers of the Lebesgue measure~$\aug{m}$ on~$\tau S^1\times X$ is the Lebesgue measure~$m$, independently of the fiber chosen (recall that we chose the initial distribution for escape rates to be Lebesgue).
\end{proof}

Now we turn to the questions of (i) whether there are coherent families of sets with small escape rates, and (ii) how to find them.

\section{Finding families with low escape rates}

\subsection{The transfer operator and its infinitesimal generator} \label{ssec:transferop}

The evolution of the system~\eqref{eq:SDE} is characterized by the time-parametrized family of \emph{stochastic transition function} $p_{s,t}:X\times\mathfrak{B}\to[0,1]$, $t\ge s$,
\[
p_{s,t}(y,A) = \prob_y(x_t\in A),
\]
describing the probability distribution of the process, given it started in $x_s=y\in X$ a.s.\ (almost surely). The stochastic transition function~$p_{s,t}(y,\cdot)$ can be shown to be absolutely continuous with respect to the Lebesgue measure for a.e.\ (almost every)~$y\in X$ (cf.~\cite[Section~11.6]{LaMa94}), and thus there is a family of \emph{transition density functions}~$\{q_{s,t}\}_{t\ge s}$, with
\[
p_{s,t}(y,A) = \int_A q_{s,t}(y,z)\,dm(z)\quad\text{for a.e.~}y\in X.
\]

If the initial condition~$x_s$ is only given by its density function~$f$, the probability distribution of~$x_t$ is
\[
\prob_{x_s\sim f}(x_t\in A) = \int_A\int_X q_{s,t}(y,z)f(y)\,dm(y)dm(z)\,.
\]
Note that the probability measure $\prob_{x_s\sim f}(x_t\in \cdot)$ is absolutely continuous with respect to~$m$: we denote its Radon--Nikod\'ym derivative by~$f_t$. The (linear) operator mapping~$f$ to~$f_t$ is called the \emph{transfer operator}~$\P_{s,t}$,\footnote{In the field of operator semigroups (see below, or, for instance~\cite{EnNa00}) the family of transfer operators,~$\{\P_{s,t}\}_{t\ge s}$, is also called \emph{evolution family}.} and is given by
\[
\P_{s,t}f := \int_X q_{s,t}(y,\cdot)f(y)\,dm(y)\,.
\]
We will see below that, due to $v$ being divergence-free for all times, $\P_{s,t}\mathbbm{1} = \mathbbm{1}$ for every~$s\le t$, and thus $\P_{s,t}:L^r(X)\to L^r(X)$ is a well-defined contraction for every $1\le r\le \infty$; cf.~\cite[Lemma 1]{BaRo95}. The function~$\mathbbm{1}$ is the a.e.\ constant function on~$X$ with value~$1$.

Suppose that~$f_t(x)$ and~$v(t,x)$ are~$C^2$ functions with respect to both~$t$ and~$x$. Then, $g(t,\cdot)=f_t$ solves the \emph{Fokker--Planck} equation~\cite[Section~11.6]{LaMa94}
\begin{equation}
\partial_t g(t,x) = \frac{\ep^2}{2}\Delta g(t,x) - div\left(g(t,\cdot) v(t,\cdot)\right)(x),\quad g(0,x) = f(x),\quad \frac{\partial g(t,\cdot)}{\partial n}\!\bigg\vert_{\partial X}=0\,,
\label{eq:FPeq}
\end{equation}
where $\Delta = \sum_{i=1}^d\partial_{x_i}^2$ denotes the Laplace operator, and $div = \sum_{i=1}^d\partial_{x_i}$ denotes the divergence operator on~$X$, and~$\tfrac{\partial}{\partial n}$ is the normal derivative on the boundary.

Assume for a moment that $v(t,x) = v(x)$, i.e.\ the vector field doesn't depend on time. Then,~$\P_{s,t}$ only depends on~$t-s$, and for simplicity we write~$\P_t$. The \emph{semigroup property} holds: $\P_{t+s} = \P_t\P_s$ for every $s,t\ge 0$. We define the operator
\begin{equation}
\IG f = \lim_{t\to 0}\frac{\P_tf-f}{t}\qquad (\text{convergence in }L^r).
\label{eq:defIG}
\end{equation}
$\IG:D(\IG)\to L^r(X)$ is called the \emph{(infinitesimal) generator} of the semigroup of operators $\{\P_t\}_{t\ge 0}$, and $D(\IG)$ is its domain, i.e.\ the set containing all $f\in L^r(X)$ where the limit~\eqref{eq:defIG} exists. If $f\in C^2(X,\R)$, then $f\in D(\IG)$, and $\IG f = \tfrac12\ep^2\Delta f - div(fv)$, hence~$\IG$ is the operator building the right hand side of the Fokker--Planck equation~\eqref{eq:FPeq}.
That $\IG$ indeed ``generates'' $\{\P_t\}$ is underlined by the following result.

\begin{proposition}[Spectral Mapping Theorem {\cite[Theorem~2.2.4]{Paz83}}] \label{prop:SMT}
For~$f\in L^r(X)$ and~$\lambda\in\C$ one has~$\IG f = \lambda f$ iff~$\P_t f = \exp(\lambda t)f$ for every~$t\ge 0$.
\end{proposition}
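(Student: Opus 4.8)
The plan is to prove the Spectral Mapping Theorem (Proposition~\ref{prop:SMT}) by establishing the two implications separately, using the generator definition~\eqref{eq:defIG} and standard semigroup machinery. The easy direction is: if $\P_t f = \exp(\lambda t)f$ for all $t\ge 0$, then
\[
\IG f = \lim_{t\to 0}\frac{\P_t f - f}{t} = \lim_{t\to 0}\frac{\exp(\lambda t)-1}{t}\,f = \lambda f,
\]
where the limit holds in $L^r$ because $f$ is a fixed element and $(\exp(\lambda t)-1)/t \to \lambda$ as a scalar; in particular $f\in D(\IG)$. This requires nothing beyond the definition.

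For the converse, suppose $f\in D(\IG)$ with $\IG f = \lambda f$. First I would record the fundamental fact that for $f\in D(\IG)$ the orbit $t\mapsto \P_t f$ is differentiable in $L^r$ with $\frac{d}{dt}\P_t f = \P_t \IG f = \IG \P_t f$; this is the standard fact that the generator commutes with the semigroup on its domain, and that $\P_t$ maps $D(\IG)$ into itself. Granting this, set $u(t) := \P_t f$, an $L^r$-valued differentiable curve satisfying the abstract ODE $u'(t) = \P_t\IG f = \lambda\,\P_t f = \lambda\, u(t)$ with $u(0)=f$. Then consider $g(t) := \exp(-\lambda t)\,u(t) = \exp(-\lambda t)\,\P_t f$. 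By the product rule for $L^r$-valued differentiable functions,
\[
g'(t) = -\lambda \exp(-\lambda t)\P_t f + \exp(-\lambda t)\,\lambda\,\P_t f = 0,
\]
so $g$ is constant, hence $g(t) = g(0) = f$, i.e.\ $\P_t f = \exp(\lambda t) f$ for every $t\ge 0$, as claimed. One technical point to handle carefully: $\lambda$ may be complex, so strictly speaking one should either work in the complexification of $L^r(X)$ or split into real and imaginary parts; the paper already speaks of $\lambda\in\C$, so I would just note once that all statements are understood in $L^r(X;\C)$.

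The main obstacle — or rather the one step that genuinely needs care — is justifying $\frac{d}{dt}\P_t f = \P_t\IG f$ for $f\in D(\IG)$, since this is what converts the pointwise limit at $t=0$ in~\eqref{eq:defIG} into a differential equation valid for all $t\ge 0$. This is a classical result in semigroup theory; since the paper cites Pazy~\cite{Paz83} for the statement, I would simply invoke~\cite[Theorem~1.2.4]{Paz83} (the basic differentiability property of $C_0$-semigroups on their generator's domain), after briefly remarking that $\{\P_t\}_{t\ge0}$ is a $C_0$-semigroup of contractions on $L^r(X)$ — strong continuity at $t=0$ follows from the Fokker--Planck dynamics and the already-noted contraction property $\|\P_t\|\le 1$. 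Everything else is a two-line computation. So in practice the proof is essentially: (i) quote the $C_0$-semigroup differentiability lemma, (ii) form $\exp(-\lambda t)\P_t f$ and differentiate to get zero, (iii) do the trivial reverse direction.
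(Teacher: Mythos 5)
Your proof is correct and is exactly the standard semigroup-theoretic argument behind the result the paper cites (the paper gives no proof of its own, deferring entirely to Pazy): the forward direction from the definition of the generator, and the converse via $\tfrac{d}{dt}\P_t f=\P_t\IG f$ on $D(\IG)$ followed by differentiating $e^{-\lambda t}\P_t f$. No gaps; the one step you flag as needing care (the differentiability of the orbit on the generator's domain) is indeed the right thing to quote from Pazy, and your handling of the "if'' direction correctly exploits that the hypothesis holds for \emph{every} $t\ge 0$.
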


Essentially, all the information contained in the family $\{\P_t\}$ is also contained in a single operator,~$\IG$. To work directly with the generator has some advantages, if one is to (numerically) compute quantities of interest related to dynamical systems. This has been exploited e.g.\ in~\cite{FrJuKo13}. In the non-autonomous setting a general result such as Proposition~\ref{prop:SMT} does not exist, however if~$f_t$ is sufficiently smooth, it satisfies $\partial_t f_t = \IG_t f_t$, where~$\IG_t$ is the infinitesimal generator associated with the vector field~$v_t:=v(t,\cdot)$, i.e.~$\IG_t f = \tfrac12\ep^2\Delta f - div(fv_t)$. This also shows that if $v(t,\cdot)$ is divergence-free for all~$t$, then $\IG_t\mathbbm{1}=0$ for all~$t$, and hence $\P_{s,t}\mathbbm{1} = \mathbbm{1}$ for all~$s\le t$.

\subsection{Lyapunov exponents and escape rates}

\paragraph{Escape rates for homogeneous processes.}

The following theorem extends results obtained in~\cite{FrSt10,FrSt13,FrJuKo13}, to continuous time stochastic processes.
It states that for eigenfunctions of~$\IG$ with decay rate $\lambda$, the positive and negative supports of the function determine sets from which the escape rate is slower than the decay rate~$\lambda$.
\begin{theorem}	\label{thm:cont_time_escrate}
For a homogeneous stochastic process $\{x_t\}_{t\ge 0}$ with almost surely continuous sample paths (cf.~\cite{Oks98}), let $\{\P_t\}_{t\ge 0}$ denote the associated transfer operator semigroup on $L^1 = L^1_m$. Let $f\in L^1\cap L^{\infty}$ be such that $\P_t f = e^{\lambda t}f$ for a $\lambda < 0$. Let us further assume that $A^{\pm} := \{\pm f\ge 0\}$ is closed. Then $E(A^{\pm})\le -\lambda$.
\end{theorem}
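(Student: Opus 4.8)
The plan is to exploit the key observation that if $\mathcal{P}_t f = e^{\lambda t} f$ with $\lambda<0$, then the positive and negative parts of $f$ cannot ``leak'' across the zero set of $f$ too quickly, because the evolution of $f$ is governed by the transfer operator and the total decay rate is exactly $\lambda$. Write $f = f^+ - f^-$, where $f^+ = \max\{f,0\}$ and $f^- = \max\{-f,0\}$, so that $f^+$ is supported on $A^+ = \{f\ge 0\}$ and $f^-$ on $A^- = \{f<0\}$ (up to the zero set of $f$). The core quantity to estimate is $m(A^+_{0,t})$, the Lebesgue measure of the set of points starting in $A^+$ that remain in $A^+$ up to time $t$ (recall the notation $A_{s,t}$ from Section~3; by Proposition~\ref{prop:nice=measurable} and the remark following Definition~\ref{def:escrate} this set is measurable, using that $A^+$ is assumed closed and is easily checked to be nice).

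First I would show that $\mathcal{P}_t f^+ \ge (\mathcal{P}_t f)^+ = e^{\lambda t} f^+$ pointwise. This follows from positivity and linearity of $\mathcal{P}_t$: since $f^+ \ge f$ and $f^+ \ge 0$, we get $\mathcal{P}_t f^+ \ge \mathcal{P}_t f = e^{\lambda t} f$ and $\mathcal{P}_t f^+ \ge 0$, hence $\mathcal{P}_t f^+ \ge e^{\lambda t}f^+$ after taking positive parts. Next, the main step: I want to localize this inequality to the survivor set. The density $\mathcal{P}_t f^+$ evaluated on $A^+$ receives contributions only from trajectories (realizations of the SDE) that at time $t$ land in $A^+$. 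Mass that starts in $A^+$, leaves $A^+$ at some intermediate time, and returns is still counted, so $\int_{A^+} \mathcal{P}_t f^+ \, dm$ is an overcount of the mass that survives; but what we actually need is a lower bound on something involving the survivor set, so I would instead argue as follows. Using the Chapman--Kolmogorov / semigroup decomposition of $\mathcal{P}_t$ through the intermediate times, split $\mathcal{P}_t f^+ = \mathcal{P}_t^{A^+} f^+ + (\text{mass that has exited } A^+ \text{ at least once})$, where $\mathcal{P}_t^{A^+}$ is the sub-Markovian ``killed'' transfer operator associated with the open system on $A^+$. Then on $A^+$ we have the elementary bound $e^{\lambda t} f^+ \le \mathcal{P}_t f^+$, and I'd like to promote this to $e^{\lambda t} f^+ \le \mathcal{P}_t^{A^+} f^+ + (\text{exited mass})$, and control the exited mass term. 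Actually the cleaner route avoids the killed operator entirely: integrate the pointwise inequality $\mathcal{P}_t f^+ \ge e^{\lambda t} f^+$ over $A^+$ to get
\[
e^{\lambda t}\int_{A^+} f^+\, dm \;\le\; \int_{A^+} \mathcal{P}_t f^+\, dm \;=\; \mathbb{P}_{x_0\sim f^+\,dm}\big(x_t\in A^+\big)\;\le\; \mathbb{P}_{x_0\sim f^+\,dm}\big(x_0\in A^+\big) = \int_{A^+} f^+\,dm,
\]
which is just a consistency check and does not yet give the survivor-set bound. So the real argument must track the whole trajectory. The right statement is: $\mathbb{P}_{x_0\sim f^+ dm}(x_r\in A^+\ \forall r\in[0,t]) \ge e^{\lambda t}\int f^+\,dm$ — equivalently the killed mass decays no faster than $e^{\lambda t}$. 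To prove this, I would use the duality between $\mathcal{P}_t$ and the Koopman (backward) operator $\mathcal{U}_t$ acting on $L^\infty$: for the open system, $g_t := \mathcal{U}_t^{A^+}\mathbbm{1}_{A^+}$ (the probability of surviving in $A^+$ for time $t$ starting from a given point) satisfies $\langle f^+, \mathcal{U}_t^{A^+}\mathbbm{1}_{A^+}\rangle = \langle \mathcal{P}_t^{A^+} f^+, \mathbbm{1}_{A^+}\rangle$, and I need to bound $\mathcal{P}_t^{A^+} f^+ \ge e^{\lambda t} f^+$. This last inequality is the crux: it says the killed evolution of $f^+$ dominates $e^{\lambda t} f^+$. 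One proves it by a Duhamel/comparison argument — $\mathcal{P}_t f^+ = \mathcal{P}_t^{A^+} f^+ + \int_0^t \mathcal{P}_{t-r}(\text{flux of } \mathcal{P}_r^{A^+} f^+ \text{ out of } A^+)\,dr$ — combined with the fact that the out-flux from a nonnegative density is nonnegative, so $\mathcal{P}_t^{A^+} f^+ \le \mathcal{P}_t f^+$ gives the wrong direction; instead use that on $A^+$, the part of $\mathcal{P}_t f^+$ coming from re-entry is supported where $f^+$ may vanish, and a maximum-principle / Feynman--Kac representation of the killed semigroup shows $\mathcal{P}_t^{A^+} f^+ \ge e^{\lambda t} f^+$ directly when $f^+$ is (up to the boundary zero set) a subsolution. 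Granting this, integrate over $A^+$:
\[
m(A^+_{0,t}) \;\ge\; \frac{1}{\|f^+\|_\infty}\int_{A^+_{0,t}} f^+\, dm \;\ge\; \frac{1}{\|f^+\|_\infty}\,\langle \mathcal{P}_t^{A^+} f^+,\mathbbm{1}\rangle \;\ge\; \frac{e^{\lambda t}}{\|f^+\|_\infty}\int f^+\, dm,
\]
where the middle inequality needs care — actually $\langle \mathcal{P}_t^{A^+} f^+, \mathbbm{1}\rangle$ is the surviving mass, which is at most $\|f^+\|_\infty\, m(A^+_{0,t})$ only if the surviving density is supported in $A^+_{0,t}$, which it is by definition of the killed semigroup. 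Taking $\frac{1}{t}\log$ and $\limsup$ gives $-E(A^+)\ge\lambda$, i.e. $E(A^+)\le -\lambda$, since $\int f^+\,dm>0$ (as $f\not\equiv 0$ and, if $f^+\equiv 0$, then $f=-f^-\le 0$ with $\mathcal{P}_t f = e^{\lambda t}f$ forcing — by $\mathcal{P}_t\mathbbm{1}=\mathbbm{1}$ and $\int\mathcal{P}_t f = \int f$ — that $\int f = e^{\lambda t}\int f$, hence $\int f =0$, hence $f\equiv 0$, a contradiction; so $f^+\not\equiv 0$ and symmetrically $f^-\not\equiv0$). The argument for $A^-$ is identical with $f^-$ in place of $f^+$, using $\mathcal{P}_t f^- \ge -e^{\lambda t} f$ and hence $\mathcal{P}_t f^- \ge e^{\lambda t} f^-$.

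The main obstacle I anticipate is rigorously establishing the localized inequality $\mathcal{P}_t^{A^+} f^+ \ge e^{\lambda t} f^+$ on $A^+$ — that is, that restricting (killing) the dynamics on $A^+$ cannot destroy the self-similar lower bound inherited from the global eigenfunction relation. Morally this is a parabolic maximum principle: $e^{\lambda t}f^+$ is a subsolution of the Fokker--Planck equation with absorbing boundary conditions on $\partial A^+$ (since $f^+ = 0$ there, the boundary condition $e^{\lambda t}f^+|_{\partial A^+} = 0$ is met, and in the interior $\partial_t(e^{\lambda t}f^+) = \lambda e^{\lambda t}f^+$ while $\mathcal{G}(e^{\lambda t}f^+) = e^{\lambda t}\mathcal{G}f^+ \ge e^{\lambda t}\mathcal{G}f = \lambda e^{\lambda t}f = \lambda e^{\lambda t}f^+$ on $\{f>0\}$, using $\mathcal{G}f = \lambda f$ from the Spectral Mapping Theorem, Proposition~\ref{prop:SMT}, and $\mathcal{G}f^+ \ge \mathcal{G}f$ pointwise where this makes sense), so by comparison the killed semigroup applied to $f^+$ stays above it. Making ``$\mathcal{G}f^+\ge\mathcal{G}f$'' precise requires either enough smoothness of $f$ (so that $f^+$ is a genuine subsolution, with the Laplacian picking up a nonnegative singular measure on $\{f=0\}$) or a regularization/approximation argument replacing $f^+$ by a smooth mollification and passing to the limit; I expect this smoothing step, together with handling the reflecting outer boundary $\partial X$ versus the absorbing inner boundary $\partial A^+$, to be where most of the technical work lies. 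Everything else — positivity of $\mathcal{P}_t$, the duality pairing, the $\limsup\frac1t\log$ bookkeeping, and the non-triviality of $f^\pm$ — is routine.
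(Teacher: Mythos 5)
Your overall architecture is right --- reduce the escape rate bound to the statement that the mass of $f^+\,dm$ surviving in $A^+$ up to time $t$ is at least $e^{\lambda t}\int f^+\,dm$, then divide by $\|f\|_{L^\infty}$ to pass from the signed/weighted measure to Lebesgue measure and take $-\limsup\frac{1}{t}\log$. (One small slip: in the stochastic setting there is no deterministic survivor set $A^+_{0,t}$; the quantity to bound below is $\prob_{x_0\sim m}\bigl(x_r\in A^+\ \forall r\in[0,t]\bigr)$, and your duality chain $\langle \mathcal{P}_t^{A^+}f^+,\mathbbm{1}\rangle=\int f^+(x)\,\prob_x(\text{survive})\,dm(x)\le\|f\|_{L^\infty}\prob_m(\mathcal{E})$ repairs this.) The genuine gap is the step you yourself flag as the crux: the inequality $\mathcal{P}_t^{A^+}f^+\ge e^{\lambda t}f^+$ for the killed semigroup is asserted via a parabolic maximum principle for which you have no hypotheses. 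The theorem assumes only that $\{x_t\}$ is a homogeneous process with a.s.\ continuous paths and $f\in L^1\cap L^\infty$; there is no generator of the form $\tfrac{\ep^2}{2}\Delta-\mathrm{div}(\cdot\,v)$, no ellipticity, and no smoothness of $f$ available, so the subsolution/Feynman--Kac comparison (and the sign of the singular measure that $\Delta f^+$ picks up on $\{f=0\}$) cannot even be formulated in the stated generality. As written, the central inequality remains unproven.

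The paper obtains exactly this localized bound by a purely measure-theoretic argument that needs no regularity beyond path continuity and closedness of $A^+$. Normalize $\int_{A^+}f\,dm=1$ and set $d\nu=f\,dm$. For a dense sequence $(r_i)\subset[0,t]$ with $r_1=t$, decompose the event $\{x_t\in A^+\}$ into the survival event $\mathcal{E}_n=\{x_{r_i}\in A^+,\ i=1,\dots,n\}$ and the disjoint ``exit'' events $\{x_{r_j}\notin A^+,\ x_{r_i}\in A^+\ \forall r_i>r_j\}$. Each exit event has non-positive $\nu$-probability, because at time $r_j$ the evolved signed density is $e^{\lambda r_j}f$, which is non-positive off $A^+$, and forward conditioning preserves the sign by positivity of the transition kernel. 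Hence $e^{\lambda t}=\prob_\nu(x_t\in A^+)\le\prob_\nu(\mathcal{E}_n)$, and $\mathcal{E}_n\downarrow\mathcal{E}$ by closedness of $A^+$ and continuity of sample paths gives $e^{\lambda t}\le\prob_\nu(\mathcal{E})\le\|f\|_{L^\infty}\prob_m(\mathcal{E})$. This is the probabilistic content of your killed-semigroup inequality, proved with only the Markov property and positivity; if you want to rescue your PDE route you would have to either add strong standing assumptions (nondegenerate diffusion, regularity of $f$ and of the free boundary $\{f=0\}$) or replace the maximum principle by precisely this kind of signed-measure decomposition.
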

\begin{proof}
See Appendix~\ref{app:auto_escrate}.
\end{proof}
The case~$\lambda\notin\R$ has been considered in~\cite{FrJuKo13} and in Remark~\ref{rem:complex} below we extend these results as well to the non-autonomous (non-homogeneous) setting.

\paragraph{Escape rates for non-homogeneous processes.}

If we are looking for a non-homogeneous version of Theorem~\ref{thm:cont_time_escrate}, it is to be expected that the role of \emph{one} function~$f$ is going to be taken by a whole \emph{family} of functions $\{f_r\}$. To assume that the temporal change in sets like $\{f_t\ge 0\}$ is everywhere continuous might be too optimistic. We thus weaken the notion of niceness introduced in Definition~\ref{def:niceness}:
\begin{definition}[Sufficiently nice family of sets] \label{def:suff_niceness}
A \emph{sufficiently nice} family of sets $\{A_r\}_{r\in \mathcal{I}}$, for an interval~$\mathcal{I}\subset\R$, is such that
\begin{enumerate}[(i)]
\item for every~$r\in\mathcal{I}$ the set $A_r$ is closed; and
\item the function $\sigma\mapsto h(A_r,A_{r+\sigma})$ is either left- or right-continuous at $\sigma=0$ for every~$r\in\mathcal{I}$.
\end{enumerate}
\end{definition}
We also introduce \emph{decay rates} as a function-based version of the set-based escape rates.
\begin{definition}[Lyapunov exponents]
Given a $f\in L^1_m$, the \emph{Lyapunov exponent} of~$f$ with respect to initial time~$s$ is defined as the decay rate of the norm of $f$ while transported by the transfer operator:
\begin{equation}
\Lambda_s(f) := \limsup_{t\to\infty} \frac{1}{t-s}\log \|\P_{s,t}f\|_1\,.
\label{eq:LyapExp}
\end{equation}
\end{definition}
We have the following:
\begin{theorem} \label{thm:nonauto_escrate}
Given~$f\in L^1\cap L^{\infty}$, $\int f=0$, define the family of sets~$\{A_r^{\pm}\}_{r\ge s}$ by~$A_r^{\pm} = \{\pm\P_{s,r}f \ge 0\}$. Suppose that~$\{A_r^{\pm}\}$ is a sufficiently nice family. Then~$E(\{A_r^{\pm}\}) \le -\Lambda_s(f)$.
\end{theorem}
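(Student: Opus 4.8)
The plan is to reduce Theorem~\ref{thm:nonauto_escrate} to a statement about how fast the positive mass of $\P_{s,t}f$ can escape from the sub-level/super-level sets $A_r^{\pm}$, mimicking the discrete-time argument of~\cite{FrSt10,FrSt13,FrJuKo13} but now in the non-homogeneous continuous-time setting. The key observation is that $\P_{s,t}$ is a positive, $L^1$-contracting Markov operator that fixes $\mathbbm{1}$ (since $v(t,\cdot)$ is divergence-free), so it preserves total mass and maps nonnegative functions to nonnegative functions. Write $f = f^+ - f^-$ with $f^{\pm} = \max\{\pm f,0\}$; since $\int f = 0$ we have $\int f^+ = \int f^- =: c$. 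By positivity and the fact that $A_r^+ = \{\P_{s,r}f \ge 0\}$, on the set $A_r^+$ the density $\P_{s,r}f$ is exactly $(\P_{s,r}f)^+$, i.e.\ the positive part; I would first establish the pointwise/measure-theoretic bound
\[
\int_{A_r^+}(\P_{s,r}f^+)\,dm \;-\; \int_{A_r^+}(\P_{s,r}f^-)\,dm \;=\; \int_{A_r^+}\P_{s,r}f\,dm \;=\; \int (\P_{s,r}f)^+\,dm \;=\; \tfrac12\|\P_{s,r}f\|_1,
\]
using $\int \P_{s,r}f = 0$. Hence $\int_{A_r^+}\P_{s,r}f^+\,dm \ge \tfrac12\|\P_{s,r}f\|_1$, which says that at every time $r$ a definite fraction — namely $\tfrac{1}{2c}\|\P_{s,r}f\|_1$ — of the transported mass of $f^+$ still sits inside $A_r^+$.

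Next I would connect "mass of $\P_{s,r}f^+$ remaining inside $A_r^+$ for all $r\in[s,t]$" with the survivor-set probability that defines $E(\{A_r^+\})$. This is the crux: the quantity $\int_{A_t^+}\P_{s,t}f^+\,dm$ only controls mass inside $A_t^+$ at the single final time $t$, whereas the escape rate asks for the mass that has stayed inside $A_r^+$ at \emph{every} intermediate time. To bridge the gap, normalize $g := f^+/c$ to a probability density and let $\{x_r\}_{r\ge s}$ be the process~\eqref{eq:SDE} with $x_s\sim g$. Then $\P_{s,r}g$ is the law of $x_r$, and I claim
\[
\prob_{x_s\sim g}\!\left(x_r\in A_r^+\ \forall r\in[s,t]\right)\;\ge\; \prob_{x_s\sim g}\!\left(x_t\in A_t^+\right) \;-\; \text{(mass that leaves and returns)},
\]
but controlling the return term directly is awkward. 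The cleaner route — and the one I would actually pursue — is to use the survivor-set monotonicity together with the sufficient-niceness hypothesis and Proposition~\ref{prop:nice=measurable} (adapted to the stochastic case as in the remark after Definition~\ref{def:escrate}) to write the survival probability as a decreasing limit over a countable dense set of times $r_i\in(s,t)$, and then bound it from below by iterating a single-time inequality: one shows that restricting to $A_{r_{i+1}}^+$ at one more time loses at most the mass that $\P_{r_i,r_{i+1}}$ moves out of $A_{r_{i+1}}^+$, and — because $A_{r_{i+1}}^+$ is exactly where $\P_{s,r_{i+1}}f\ge 0$ — any mass of the \emph{full signed} density $\P_{s,r}f$ that would be "lost" on the positive side is compensated on the negative side, so the net positive mass $\tfrac12\|\P_{s,r}f\|_1$ is a valid running lower bound. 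Taking logs,
\[
\frac{1}{t-s}\log \prob_{x_s\sim g}\!\left(x_r\in A_r^+\ \forall r\in[s,t]\right) \;\ge\; \frac{1}{t-s}\log\!\left(\frac{1}{2c}\,\|\P_{s,t}f\|_1\right) + o(1),
\]
and since the initial distribution in Definition~\ref{def:escrate} is Lebesgue $m$ rather than $g$, I replace $g$ by $m$ at the cost of a comparison constant: because $g\in L^\infty$ and $m$ has full support, there is a constant depending only on $\|g\|_\infty$ (via $g \le \|g\|_\infty \mathbbm{1} = \|g\|_\infty\, dm/dm$ pointwise, using $f\in L^\infty$) relating the two survival probabilities up to a multiplicative factor, which is absorbed in the $\limsup$. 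Letting $t\to\infty$ and taking $\limsup$ gives $-E(\{A_r^+\}) \ge \limsup_{t\to\infty}\tfrac{1}{t-s}\log\|\P_{s,t}f\|_1 = \Lambda_s(f)$, i.e.\ $E(\{A_r^+\})\le -\Lambda_s(f)$. The argument for $A_r^-$ is identical after replacing $f$ by $-f$, which swaps the roles of $f^+$ and $f^-$ and of the super-/sub-level sets.

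The main obstacle I anticipate is precisely the middle step: making rigorous the claim that controlling the positive net mass of the signed density $\P_{s,r}f$ inside $A_r^+$ at each time is enough to lower-bound the \emph{survival} probability of the auxiliary process, i.e.\ that "mass can leave $A_r^+$ only through boundary times" in a way that is compatible with Brownian paths crossing $\partial A_r^+$ infinitely often. The sufficient-niceness assumption (Definition~\ref{def:suff_niceness}) — one-sided Hausdorff-continuity of $r\mapsto A_r^{\pm}$ — is exactly what rescues this: it guarantees that the measurability machinery of Proposition~\ref{prop:nice=measurable} still applies so that the survivor probability is a genuine decreasing limit along a dense set of times, and it prevents the pathological jumping of $A_r^{\pm}$ that would otherwise decouple the single-time mass bound from the all-times survival bound. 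I would handle the boundary/crossing subtlety by working with the closed sets $A_r^+$ and noting that $\{x_r \in A_r^+\ \forall r\}$ already builds in the continuity of sample paths, so no separate regularization of the level sets is needed beyond what Definition~\ref{def:suff_niceness} provides. The remaining details — absolute continuity of $\P_{s,t}f^+$, the mass-splitting identity, and the constant comparing $g$-initialized and $m$-initialized survival probabilities — are routine given Assumptions~\ref{ass:regularity} and the divergence-free hypothesis.
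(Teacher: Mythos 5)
Your outline is essentially the paper's argument: the identity $\tfrac12\|\P_{s,t}f\|_1=\int_{A_t^+}\P_{s,t}f\,dm$ (valid because $\P_{s,t}$ is Markov and $\int f=0$), a countable dense set of times $(r_i)\subset[s,t]$ with $r_1=t$ along which the survival event is a decreasing limit $\mathcal{E}_n\downarrow\mathcal{E}$ (this is where sufficient niceness and path continuity enter, exactly as you say), and the final comparison $\prob_{\nu}(\cdot)\le\|f\|_{L^\infty}\prob_m(\cdot)$ before taking $\limsup_{t\to\infty}$. The one step you flag as the crux is resolved in the paper not by ``iterating a single-time inequality'' but by a one-shot disjoint decomposition of the terminal event, computed against the \emph{signed} measure $\nu$ with $d\nu=f\,dm$: one writes $\{x_t\in A_t^+\}$ as the disjoint union of $\mathcal{E}_n$ with the ``last-exit'' events $\{x_{r_j}\notin A_{r_j}^+,\ x_{r_i}\in A_{r_i}^+\ \forall r_i>r_j\}$, and each of the latter has non-positive $\nu$-probability because off $A_{r_j}^+$ the density $\P_{s,r_j}f$ is negative and positivity of the transfer operators propagates this sign forward; hence $\tfrac12\|\P_{s,t}f\|_1=\prob_\nu(x_t\in A_t^+)\le\prob_\nu(\mathcal{E}_n)$ for every $n$. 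Your detour through the normalized positive part $g=f^+/c$ is an unnecessary complication that makes this compensation harder to state (the compensation is intrinsically a statement about the signed measure, and $\prob_{\nu^+}(\mathcal{E})\ge\prob_\nu(\mathcal{E})$ recovers your claim anyway); working with $\nu$ throughout, as in the paper's proof of Theorem~\ref{thm:cont_time_escrate}, closes the gap cleanly. With that substitution your proposal is correct and matches the paper.
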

\begin{proof}
See Appendix~\ref{app:nonauto_escrate}.
\end{proof}

Theorem~\ref{thm:nonauto_escrate} states that if we find a function with a low decay rate~$\Lambda$ (a ``low'' decay means $\Lambda$ is small negative, i.e.\ close to zero), than we readily have a pair of sets with coherence (escape) rate at least as low as the decay rate~$\Lambda$. Thus, we aim for finding functions with low decay rates. Note that since $\P_{s,t}\mathbbm{1} = \mathbbm{1}$ for all~$s\le t$, any function not perpendicular to~$\mathbbm{1}$, i.e.\ satisfying $\int f \neq 0$, will have a decay rate~$\Lambda=0$, because $\|\P_{s,t}f\|_1 \not\to 0$ as $t\to\infty$.


In general, properties of interest in periodically forced nonlinear dynamical systems can be characterized by those of the associated \emph{Poincar\'e return map}, the map describing the time-$\tau$-evolution of the system with forcing period~$\tau$. By considering the transfer operator for a single period~$\tau$ we have the following simple characterization of the slowest decaying family of functions.
\begin{proposition}	\label{prop:CohPoiB}
For every $s,\tilde{s}\in \tau S^1$ we have
\begin{equation}
\max_{\int\!\! f=0} \Lambda_s(f) = \frac{1}{\tau}\log\left|\lambda_2\big(\P_{s,s+\tau}\big)\right| = \frac{1}{\tau}\log\left|\lambda_2\big(\P_{\tilde{s},\tilde{s}+\tau}\big)\right|,
\label{eq:CohPoiB}
\end{equation}
where~$\lambda_2(\cdot)$ denotes the second largest eigenvalue of the argument. The maximizer of the left hand side is the eigenfunction~$f_s$ of~$\P_{s,s+\tau}$ corresponding to this eigenvalue.
\end{proposition}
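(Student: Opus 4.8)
The plan is to prove the two equalities in~\eqref{eq:CohPoiB} separately: first the characterization of $\max_{\int f = 0}\Lambda_s(f)$ in terms of $\lambda_2(\P_{s,s+\tau})$ for a fixed $s$, and then the independence of this quantity on the choice of~$s$, which in fact reduces to a similarity (conjugacy) statement between the Poincar\'e transfer operators at different anchor times.

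First I would fix the anchor time~$s$ and analyze $\Lambda_s(f)$ for $f$ with $\int f = 0$. The key observation is a ``subsequence suffices'' lemma: because $\P_{s,t}$ is a contraction on $L^1$ for every $t\ge s$ (as noted after~\eqref{eq:FPeq}, using $\P_{s,t}\mathbbm 1 = \mathbbm 1$), the sequence $t\mapsto \log\|\P_{s,t}f\|_1$ is nonincreasing, hence $\limsup_{t\to\infty}\frac1{t-s}\log\|\P_{s,t}f\|_1$ can be computed along the arithmetic progression $t = s + n\tau$. By the evolution-family (cocycle) property $\P_{s,s+n\tau} = \P_{s+(n-1)\tau,\,s+n\tau}\cdots\P_{s+\tau,\,s+2\tau}\P_{s,s+\tau}$, and by time-periodicity $\P_{s+k\tau,\,s+(k+1)\tau} = \P_{s,s+\tau} =: Q_s$, so $\P_{s,s+n\tau} = Q_s^n$. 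Therefore $\Lambda_s(f) = \limsup_{n\to\infty}\frac1{n\tau}\log\|Q_s^n f\|_1$, and this is exactly $\frac1\tau$ times the (logarithm of the) growth rate of $f$ under the single operator $Q_s$. Now $Q_s$ is a positive $L^1$-contraction fixing $\mathbbm 1$ (equivalently its dual fixes Lebesgue measure); on the invariant subspace $\{\int f = 0\}$ the spectral radius of $Q_s$ equals $|\lambda_2(Q_s)|$, the modulus of the second-largest eigenvalue, and by the spectral radius formula $\sup_{\int f=0}\limsup_n \frac1n\log\|Q_s^n f\|_1 = \log r(Q_s|_{\{\int f=0\}}) = \log|\lambda_2(Q_s)|$, attained by a corresponding eigenfunction $f_s$ (for which $\|Q_s^n f_s\|_1 = |\lambda_2|^n\|f_s\|_1$ exactly, giving the $\limsup$ as a genuine limit). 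This yields the first equality in~\eqref{eq:CohPoiB} with maximizer $f_s$.

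For the second equality — independence of $s$ — I would exhibit the conjugacy $Q_{\tilde s} = \P_{s,\tilde s}\, Q_s\, \P_{\tilde s, s}$ (taking $\tilde s \ge s$ without loss of generality, and $\P_{\tilde s, s} = \P_{s,\tilde s}^{-1}$ when it exists, or else re-deriving it via the cocycle identity $\P_{\tilde s,\tilde s+\tau} = \P_{s+\tau,\tilde s+\tau}\,\P_{s,s+\tau}\,\P_{\tilde s,s}$ together with periodicity $\P_{s+\tau,\tilde s+\tau} = \P_{s,\tilde s}$). This exhibits $Q_s$ and $Q_{\tilde s}$ as similar operators, hence having the same nonzero spectrum, in particular $|\lambda_2(Q_s)| = |\lambda_2(Q_{\tilde s})|$; combined with the first equality applied at both $s$ and $\tilde s$ this gives $\max_{\int f=0}\Lambda_s(f) = \max_{\int f=0}\Lambda_{\tilde s}(f)$. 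Alternatively this equality of maxima is already implied by Theorem~\ref{thm:escrate_indep} together with Theorem~\ref{thm:nonauto_escrate} and its sharpness, but the direct similarity argument is cleaner and self-contained.

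The main obstacle I anticipate is the spectral step: identifying $\sup_{\int f=0}\limsup_n\frac1n\log\|Q_s^n f\|_1$ with $\log|\lambda_2(Q_s)|$. This requires knowing that the restriction of $Q_s$ to the codimension-one subspace $\{\int f = 0\}$ has a spectral radius that is actually \emph{attained} by an eigenvalue $\lambda_2$ (rather than only being a supremum of spectral values or an essential-spectrum phenomenon), and that the growth rate of individual orbits matches the spectral radius. For this one needs compactness or quasi-compactness of $Q_s$ on $L^1$ — which here follows from the smoothing property of the Fokker--Planck semigroup (for $\ep>0$ the transition densities $q_{s,s+\tau}$ are smooth kernels, so $Q_s$ is compact) — allowing a spectral decomposition into a finite-dimensional peripheral part and a strictly contracting remainder on $\{\int f=0\}$. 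I would state the needed compactness/quasi-compactness explicitly (it is standard for uniformly elliptic diffusions on bounded domains with reflecting boundary under Assumption~\ref{ass:regularity}) and then the Gelfand spectral-radius formula, applied on the closed $Q_s$-invariant subspace $\{\int f = 0\}$, delivers the claim; the exactness of the $\limsup$ at the eigenfunction is immediate. A secondary, more minor point is justifying that the $\limsup$ over continuous $t$ equals the $\limsup$ over $t\in s+\tau\N$, which is where the monotonicity from contractivity is essential.
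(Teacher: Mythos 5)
Your overall strategy coincides with the paper's: restrict to times $t=s+n\tau$ so that $\P_{s,s+n\tau}=[\P_{s,s+\tau}]^n$ by periodicity, identify the resulting growth rate with $\frac1\tau\log|\lambda_2|$ via compactness of $\P_{s,s+\tau}$ and a Riesz/spectral decomposition on the invariant subspace $\{\int f=0\}$, and obtain the $s$-independence from a spectral identity relating $\P_{s,s+\tau}$ and $\P_{\tilde s,\tilde s+\tau}$. One place where your route is genuinely cleaner: you use monotonicity of $t\mapsto\|\P_{s,t}f\|_1$ (from $L^1$-contractivity) to equate the continuous-time $\limsup$ with the $\limsup$ along $t\in s+\tau\N$, whereas the paper only gets ``$\ge$'' from the subsequence and proves ``$\le$'' separately by splitting $[s,t]$ into $[s,t-k(t)\tau]\cup[t-k(t)\tau,t]$ and bounding the operator norm of the first factor on $\mathbbm{1}^\perp$; your monotonicity argument subsumes both directions in one step and is correct.

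The one step that needs repair is the $s$-independence. You write the conjugacy $Q_{\tilde s}=\P_{s,\tilde s}\,Q_s\,\P_{\tilde s,s}$ with $\P_{\tilde s,s}=\P_{s,\tilde s}^{-1}$, and a ``cocycle identity'' $\P_{\tilde s,\tilde s+\tau}=\P_{s+\tau,\tilde s+\tau}\,\P_{s,s+\tau}\,\P_{\tilde s,s}$ that again contains the backward operator $\P_{\tilde s,s}$ for $\tilde s>s$. For $\ep>0$ the transfer operator is compact (as you yourself note), hence not invertible on $L^1$, so neither the inverse nor a genuine similarity is available, and the identity as written does not parse as a forward composition. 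The correct and standard fix — which is exactly the paper's Lemma~\ref{lem:cycInterchange} — is to take $s\le\tilde s\le s+\tau$, set $A=\P_{s,\tilde s}$ and $B=\P_{\tilde s,s+\tau}$, and observe that $Q_s=BA$ while $Q_{\tilde s}=\P_{s+\tau,\tilde s+\tau}\,\P_{\tilde s,s+\tau}=AB$ by periodicity; then invoke $\sigma(AB)\setminus\{0\}=\sigma(BA)\setminus\{0\}$, which requires no invertibility. With that substitution (and discarding the non-load-bearing appeal to ``sharpness'' of Theorem~\ref{thm:nonauto_escrate}, which that theorem does not provide), your proof is complete.
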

\begin{proof}
See Appendix~\ref{app:decaybound1}.
\end{proof}
The second equality in~\eqref{eq:CohPoiB} states that this eigenvalue does not depend on~$s$.


\section{The generator on augmented phase space}
\label{sec:genaug}

Let us first recapitulate the developments in the last three sections. In order to have a notion of coherence with a tractable stochastic generalization, we introduced escape rates based on survivor sets in sections~\ref{ssec:survival} and~\ref{ssec:escape}. There, the main result was Theorem~\ref{thm:escrate_nonauto_aug}, showing that the non-autonomous escape rate equals the autonomous escape rate of the associated augmented system. The generalization of escape rates to stochastically perturbed systems was introduced in section~\ref{sec:cohfam}. Finally, Theorem~\ref{thm:nonauto_escrate} and Proposition~\ref{prop:CohPoiB} showed that coherent families can be found by considering the Lyapunov spectrum of the transfer operator family associated with the system.

Next we would like to find coherent families directly in augmented space, in a way crossbreeding theorems~\ref{thm:escrate_nonauto_aug} and~\ref{thm:nonauto_escrate}. To this end, we start with an informal reasoning to get to our main object, introduced in equation~\eqref{eq:augIG} below.

Recall from equations~\eqref{eq:fullSDE} and~\eqref{eq:augSDE} that the non-homogeneous SDE $dx_t = v(t,x_t)dt + \ep dw_t$ can be made homogeneous by augmenting the state space,~$d\aug{x}_t = \aug{v}(\aug{x}_t)dt + \aug{\ep}d\aug{w}_t$.
For a homogeneous process, Theorem~\ref{thm:cont_time_escrate} connects the escape rate from a set and the spectrum of the corresponding transfer operator semigroup. Hence, if~$\aug{\P}_t$ denotes the transfer operator semigroup of the augmented SDE~\eqref{eq:augSDE}, then the eigenfunctions of~$\aug{\P}_t$ yield augmented sets with low escape rates\footnote{Thus, by Proposition~\ref{prop:escapeduality}, it also yields coherent families. Here, however, we will take a different path, to add further connections shown in Figure~\ref{fig:connections}, and to allow us to deal with slightly more general coherent families (recall that Theorem~\ref{thm:cont_time_escrate} requires closed sets). This construction will further reveal \emph{quasi-periodic} coherent families, cf.~Remark~\ref{rem:complex} below.}. Furthermore, by the spectral mapping theorem (Proposition~\ref{prop:SMT}), these eigenfunctions are the same as those of the associated infinitesimal generator~$\augIG$. The generator can be obtained from the Fokker--Planck equation associated with~\eqref{eq:augSDE}: for a scalar function~$\aug{f}$, with~$\aug{f}(\aug{x}) = \aug{f}(\theta,x)$ and \emph{``time-slices''}~$f_{\theta}:=\aug{f}(\theta,\cdot)$, we have
\begin{equation}
\augIG \aug{f}(\theta,x) = -\partial_{\theta}\aug{f}(\theta,x) + \IG_{\theta}f_{\theta}(x)\,,
\label{eq:augIG}
\end{equation}
with domain satisfying
\[
D(\augIG) \supset \left\{ \aug{f}\,\big\vert\, \theta\mapsto f_\theta\in C^1\!\left(\tau S^1,L^r\right),\ f_{\theta}\in D(\IG_{\theta})\ \forall \theta\in \tau S^1\right\}\,,
\]
for some~$1\le r<\infty$.
We call~$\augIG$ the \emph{augmented generator}.

\begin{remark}
The semigroup~$\aug{\P}_t$ above is equivalent to the so-called \emph{evolution semigroup}; cf.~\cite{ChiLa99,Sch99,RaeEtAl00,EnNa00} and references therein, while the idea seems to go back to~\cite{How74}.
\end{remark}

\paragraph{Lyapunov exponents.}

Our next results state that the augmented generator contains the information about the Lyapunov spectrum of the periodically driven transfer operator family; that is, it could be viewed as a spectral mapping theorem for such families of operators. In the context of evolution semigroups such results are again known by the name spectral mapping theorems (or the existence of exponential dichotomies), and can be found e.g.~in~\cite{ChiLa99} or~\cite[Section~VI.9]{EnNa00}.
\begin{lemma}	\label{lem:spectral_con}
If $\augIG \aug{f} = \mu \aug{f}$ for some~$\mu\in\C$, then $\aug{f}(s+t \mod \tau,\cdot) = e^{-\mu t}\P_{s,s+t} \aug{f}(s,\cdot)$, for $s\in \tau S^1$, $t\ge 0$. In particular, $\P_{s,s+\tau}f_s = e^{\mu\tau} f_s$, i.e.\ the time-slices of~$\aug{f}$ are eigenfunctions of~$\P_{s,s+\tau}$ for the corresponding~$s\in\tau S^1$.
\end{lemma}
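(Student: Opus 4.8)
\textbf{Approach.} The plan is to exploit the explicit form of $\augIG$ in~\eqref{eq:augIG} together with the characterization $\partial_t f_t = \IG_t f_t$ of the non-autonomous transfer operator. The key observation is that an eigenfunction equation $\augIG\aug f = \mu\aug f$ is really a family of coupled equations: writing $\aug f(\theta,\cdot) = f_\theta$, the identity $\augIG\aug f(\theta,x) = -\partial_\theta\aug f(\theta,x) + \IG_\theta f_\theta(x) = \mu\aug f(\theta,x)$ rearranges to $\partial_\theta f_\theta = \IG_\theta f_\theta - \mu f_\theta$, a linear non-autonomous evolution equation for the curve $\theta\mapsto f_\theta$ in $L^r$. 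I would then show that the curve $g_t := e^{-\mu t}\P_{s,s+t}f_s$ solves the same evolution equation in $t$ with the same initial value $g_0 = f_s$, and invoke uniqueness of solutions to conclude $f_{s+t\bmod\tau} = g_t = e^{-\mu t}\P_{s,s+t}f_s$.

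\textbf{Key steps.}
\begin{enumerate}[(1)]
\item From $\augIG\aug f = \mu\aug f$ and the membership $\aug f\in D(\augIG)$, extract that $\theta\mapsto f_\theta$ is $C^1$ into $L^r$, that each $f_\theta\in D(\IG_\theta)$, and that the pointwise-in-$\theta$ identity $\tfrac{d}{d\theta}f_\theta = (\IG_\theta - \mu)f_\theta$ holds in $L^r$.
\item Fix $s\in\tau S^1$ and set $g_t := e^{-\mu t}\,\P_{s,s+t}f_s$ for $t\ge 0$. Using the non-autonomous analogue $\partial_t\P_{s,t}h = \IG_t\P_{s,t}h$ (the Fokker--Planck relation $\partial_t f_t = \IG_t f_t$ recalled in Section~\ref{ssec:transferop}), differentiate to get $\tfrac{d}{dt}g_t = -\mu g_t + e^{-\mu t}\IG_{s+t}\P_{s,s+t}f_s = (\IG_{s+t}-\mu)g_t$, with $g_0 = f_s$.
\item Observe that $t\mapsto f_{(s+t)\bmod\tau}$ also solves $\tfrac{d}{dt}u_t = (\IG_{s+t}-\mu)u_t$ with $u_0 = f_s$, by step (1) and periodicity of $\theta\mapsto\IG_\theta$. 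By uniqueness of solutions to this linear non-autonomous Cauchy problem (the evolution family $\P_{s,t}$ being the unique such propagator under Assumption~\ref{ass:regularity}), conclude $f_{(s+t)\bmod\tau} = g_t = e^{-\mu t}\P_{s,s+t}f_s$ for all $t\ge 0$.
\item Specialize to $t=\tau$: since $(s+\tau)\bmod\tau = s$, we get $f_s = e^{-\mu\tau}\P_{s,s+\tau}f_s$, i.e.\ $\P_{s,s+\tau}f_s = e^{\mu\tau}f_s$, as claimed. This holds for every $s\in\tau S^1$.
\end{enumerate}

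\textbf{Main obstacle.} The delicate point is the uniqueness argument in step~(3): one must know that the linear non-autonomous evolution equation $\dot u = \IG_t u$ has a unique solution, so that any smooth curve satisfying it coincides with the one generated by $\P_{s,t}$. This is exactly the well-posedness of the Fokker--Planck equation under Assumption~\ref{ass:regularity}, and it is what lets us identify the abstract eigenfunction fibres with transfer-operator trajectories; I would cite the parabolic well-posedness theory (or the evolution-family references already in the paper, e.g.~\cite{EnNa00}) rather than reprove it. A secondary technical care is that the identities in step~(2) hold in $L^r$ and require $f_s\in D(\IG_{s+t})$ along the orbit, which follows from step~(1) plus the regularization property of the diffusion semigroup; alternatively one argues first for the dense core of $C^2$ time-slices and then extends by continuity.
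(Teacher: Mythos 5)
Your proposal is correct and follows essentially the same route as the paper's proof: rewrite the eigenvalue equation as the non-autonomous evolution equation $\partial_\theta f_\theta = (\IG_\theta - \mu)f_\theta$, verify that $e^{-\mu t}\P_{s,s+t}f_s$ solves it with the same initial datum, and conclude by uniqueness of solutions (which the paper supplies via Lemma~\ref{lem:analytic evolution}, drawing on Tanabe's parabolic well-posedness theory, exactly the point you flag as the main obstacle). The specialization to $t=\tau$ for the eigenvalue claim is likewise as in the paper.
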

\begin{proof}
See Appendix~\ref{app:auggenproofs}.
\end{proof}
Note that since eigenfunctions of $\P_{s,t}$ have zero mean, we have $\int f_{\theta}\,dm =0$ for all~$\theta\in\tau S^1$.
Thus, combining Lemma~\ref{lem:spectral_con} with Proposition~\ref{prop:CohPoiB} shows that eigenfunctions of~$\augIG$ at \emph{subdominant} eigenvalues, i.e.\ at eigenvalues~$\mu$ where $\mathrm{Re}(\mu)$, the real part of~$\mu$, is close to zero, yield coherent families. The following theorem shows that these eigenfunctions are exactly the same as those in the transfer operator based family arising implicitly in~\eqref{eq:CohPoiB}. Hence, it is an augmented time-infinitesimal version of Proposition~\ref{prop:CohPoiB}.


\begin{theorem}	\label{thm:IGcohPeriodic}
For every~$s\in\tau S^1$ we have
\begin{equation}
\max_{\int\!\! f=0} \Lambda_s(f) = \mathrm{Re}\left(\lambda_2\left(\augIG\right)\right)\,,
\label{eq:lyap auggen}
\end{equation}
with~$\lambda_2(\augIG)$ denoting the first subdominant eigenvalue of~$\augIG$. The $s$-dependent maximizer of the left-hand side is~$f = \aug{f}(s,\cdot)$, where~$\aug{f}$ is the corresponding eigenfunction of~$\augIG$.
\end{theorem}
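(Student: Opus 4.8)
The strategy is to prove the two inequalities in~\eqref{eq:lyap auggen} separately, using Lemma~\ref{lem:spectral_con} and Proposition~\ref{prop:CohPoiB} as the bridge between the augmented generator and the Poincar\'e transfer operator~$\P_{s,s+\tau}$. The key identity to exploit is the one furnished by Lemma~\ref{lem:spectral_con}: if $\aug{f}$ is an eigenfunction of $\augIG$ with eigenvalue $\mu$, then its time-slice $f_s = \aug{f}(s,\cdot)$ satisfies $\P_{s,s+\tau} f_s = e^{\mu\tau} f_s$, and moreover $\int f_s\,dm = 0$. Together with Proposition~\ref{prop:CohPoiB}, which says $\max_{\int f=0}\Lambda_s(f) = \tfrac{1}{\tau}\log|\lambda_2(\P_{s,s+\tau})|$, this should reduce the whole theorem to a correspondence between the subdominant eigenvalue of $\augIG$ (ordered by real part) and the second-largest-modulus eigenvalue of the period map $\P_{s,s+\tau}$.

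\textbf{Direction ``$\le$''.} First I would take $\aug{f}$ to be the eigenfunction of $\augIG$ at $\lambda_2(\augIG)$, i.e.\ with the first subdominant real part, write $\mu = \lambda_2(\augIG)$, and set $f := \aug{f}(s,\cdot)$. By Lemma~\ref{lem:spectral_con} this $f$ has zero mean and is an eigenfunction of $\P_{s,s+\tau}$ with eigenvalue $e^{\mu\tau}$, hence $\|\P_{s,s+k\tau}f\|_1 = |e^{\mu\tau}|^k\|f\|_1 = e^{k\tau\,\mathrm{Re}(\mu)}\|f\|_1$; evaluating the $\limsup$ in~\eqref{eq:LyapExp} along the subsequence $t = s+k\tau$ and using that the transfer operators are contractions on $L^1$ to control the intermediate times $t\in(s+k\tau, s+(k+1)\tau)$ gives $\Lambda_s(f) = \mathrm{Re}(\mu) = \mathrm{Re}(\lambda_2(\augIG))$. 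Hence $\max_{\int f=0}\Lambda_s(f) \ge \mathrm{Re}(\lambda_2(\augIG))$.

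\textbf{Direction ``$\ge$''.} For the reverse inequality I would argue that $\mathrm{Re}(\lambda_2(\augIG))$ cannot exceed $\tfrac{1}{\tau}\log|\lambda_2(\P_{s,s+\tau})|$, which by Proposition~\ref{prop:CohPoiB} equals $\max_{\int f=0}\Lambda_s(f)$. To see this, note the eigenfunction of $\augIG$ at the \emph{dominant} eigenvalue $0$ is $\aug{\mathbbm{1}}$ (since $\augIG\aug{\mathbbm{1}} = -\partial_\theta\mathbbm{1} + \IG_\theta\mathbbm{1} = 0$), and this corresponds under Lemma~\ref{lem:spectral_con} to the fixed point $\mathbbm{1}$ of $\P_{s,s+\tau}$ with eigenvalue $1$. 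For \emph{any} eigenvalue $\mu$ of $\augIG$ with $\mathrm{Re}(\mu) > \mathrm{Re}(\lambda_2(\augIG))$ impossible by definition, and for $\mu = \lambda_2(\augIG)$ itself the time-slice $f_s$ is a zero-mean eigenfunction of $\P_{s,s+\tau}$ with $|e^{\mu\tau}| = e^{\tau\mathrm{Re}(\mu)}$, so by maximality of $|\lambda_2(\P_{s,s+\tau})|$ among moduli of zero-mean eigenvalues we get $e^{\tau\mathrm{Re}(\lambda_2(\augIG))} \le |\lambda_2(\P_{s,s+\tau})|$, i.e.\ $\mathrm{Re}(\lambda_2(\augIG)) \le \tfrac1\tau\log|\lambda_2(\P_{s,s+\tau})| = \max_{\int f=0}\Lambda_s(f)$. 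Chaining the two inequalities yields~\eqref{eq:lyap auggen}, and the identification of the maximizer as $\aug{f}(s,\cdot)$ is exactly what was produced in the ``$\le$'' direction.

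\textbf{Main obstacle.} The delicate point is the passage between ``$\lambda_2$ ordered by real part'' (for $\augIG$) and ``$\lambda_2$ ordered by modulus'' (for $\P_{s,s+\tau}$), i.e.\ ensuring the spectral correspondence of Lemma~\ref{lem:spectral_con} is tight enough: one must know that \emph{every} zero-mean eigenvalue $\lambda$ of $\P_{s,s+\tau}$ lifts back to an eigenvalue $\mu$ of $\augIG$ with $e^{\mu\tau} = \lambda$ (a surjectivity statement going in the opposite direction to Lemma~\ref{lem:spectral_con}), so that the two notions of ``second largest'' genuinely match up and no eigenvalue of the period map with larger modulus is missed. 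If only the direction stated in Lemma~\ref{lem:spectral_con} is available, then strictly speaking one only gets $\mathrm{Re}(\lambda_2(\augIG))$ sitting \emph{among} the Lyapunov values of zero-mean functions, giving ``$\le$'' for the $\max$; the ``$\ge$'' then needs either the converse lifting or an independent argument (e.g.\ constructing, from the top zero-mean eigenfunction $f_s$ of $\P_{s,s+\tau}$, the function $\aug{f}(\theta,\cdot) := e^{-\mu(\theta-s)}\P_{s,\theta}f_s$ and checking directly that it lies in $D(\augIG)$ and satisfies $\augIG\aug{f} = \mu\aug{f}$ with $e^{\mu\tau}=\lambda_2(\P_{s,s+\tau})$). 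I would carry out this explicit construction to close the gap, taking care that the resulting $\aug{f}$ is well-defined on $\tau S^1$ (periodic in $\theta$, which holds precisely because $\P_{s,s+\tau}f_s = e^{\mu\tau}f_s$) and $C^1$ in $\theta$ into $L^r$ so that it is in the stated domain of $\augIG$.
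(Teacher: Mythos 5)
Your final plan coincides with the paper's proof: reduce via Proposition~\ref{prop:CohPoiB} to the period map $\P_{s,s+\tau}$, use Lemma~\ref{lem:spectral_con} for one direction, and for the converse explicitly construct $\aug{f}(\theta,\cdot)=e^{-\mu(\theta-s)}\P_{s,\theta}f_s$ from the top zero-mean eigenfunction of $\P_{s,s+\tau}$ and verify $\augIG\aug{f}=\mu\aug{f}$ --- exactly the construction carried out in Appendix~\ref{app:auggenproofs}. Two remarks. First, note that your paragraph labelled ``Direction $\ge$'' as written establishes $\mathrm{Re}(\lambda_2(\augIG))\le\max_{\int f=0}\Lambda_s(f)$, which is the \emph{same} inequality as your ``Direction $\le$'' paragraph; the genuinely missing inequality, $\max_{\int f=0}\Lambda_s(f)\le\mathrm{Re}(\lambda_2(\augIG))$, is supplied only by the converse lifting you describe under ``Main obstacle''. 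You correctly diagnose this yourself, so the plan is complete, but the two-direction bookkeeping should be cleaned up. Second, the step you flag as ``checking that $\aug{f}$ lies in $D(\augIG)$ and is $C^1$ in $\theta$'' is not a formality: it is precisely where the paper invokes the parabolic regularity of Lemma~\ref{lem:analytic evolution} (unique classical solvability and differentiability of $\partial_t u_t=\IG_t u_t$ in $L^1$, with $\P_{s,t}f\in D(\IG_t)$ for $t>s$), together with the periodicity $f_{s+\tau}=f_s$ guaranteed by $\P_{s,s+\tau}f_s=e^{\mu\tau}f_s$; without such a regularity result the constructed $\aug{f}$ need not be an eigenfunction of $\augIG$ in the required sense.
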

\begin{proof}
See Appendix~\ref{app:auggenproofs}.
\end{proof}

\begin{remark}[The complex eigenvalue case] \label{rem:complex}
Suppose~$\lambda_2(\augIG) =: \mu\in\C\setminus\R$. Then there is a~$\delta>0$ such that $e^{\delta\mu}\in\R$. For~$t=s+k\delta$, $k\in\N$, we then have by recalling~$\P_{s,t}f_s = e^{\mu(t-s)}f_t$ from Lemma~\ref{lem:spectral_con}, that
\begin{equation}
\P_{s,t}f_s = e^{k\delta\mu}f_t\,.
\label{eq:complexEV}
\end{equation}
However,~$\P_{s,t}$ maps real-valued functions again to real-valued ones, hence~\eqref{eq:complexEV} holds also for the real parts~$f_s^{\rm R}, f_t^{\rm R}$ and imaginary parts~$f_s^{\rm I}, f_t^{\rm I}$ of~$f_s,f_t$, respectively. The operator~$\P_{s,t}$ is non-expansive for \emph{every}~$s\le t$, hence the function~$t\mapsto \|\P_{s,t}g\|_{L^1}$ is monotonically decreasing for any~$g$. Setting~$g = f_s^{\rm R}$ or~$g = f_s^{\rm I}$, this shows together with~\eqref{eq:complexEV} that
\begin{equation}
\label{eq:realimag}
\Lambda_s(f_s^{\rm R}) = \Lambda_s(f_s^{\rm I}) = \mu.
\end{equation}

In order to obtain a coherent family in the sense of Theorem~\ref{thm:nonauto_escrate}, we may consider the positive (or negative) support of~$\P_{s,t}f_s^{\rm R}$. Again from Lemma~\ref{lem:spectral_con}, we see that
\begin{equation}
\P_{s,t}f_s^{\rm R} = \mathrm{Re}\big(e^{\mu(t-s)}f_t\big) = e^{\alpha(t-s)}\left(\cos\big(\beta(t-s)\big)f_t^{\rm R} - \sin\big(\beta(t-s)\big)f_t^{\rm I}\right)\,,
\label{eq:complex family}
\end{equation}
where~$\mu = \alpha + \beta i$, $\alpha,\beta\in\R$ (we note that a similar construction has been employed in~\cite{FrGTQu14} for the discrete time case). 
In particular, the real part of $f_s$ returns to a real function $f_t$ at some later $t=2\pi/\beta$, where $\beta$ is completely independent of the driving period $\tau$.
The corresponding coherent family of sets will be periodic if and only if~$\tau$ and~$\tfrac{2\pi}{\mathrm{Im}(\mu)}$ are rationally dependent (i.e., they have a finite common multiple), otherwise the family will be quasi-periodic.
We now show that the decision to use $f^{\rm R}$ represents one ``phase'' from a cycle of period $2\pi/\beta$.

Since the decay rate of both~$f_s^{\rm R}$ and~$f_s^{\rm I}$ is~$\mu$, the decay rate of any (complex) linear combination of these functions is also $\mu$. Thus,
\[
f_s^{\vartheta}:= \mathrm{Re}(e^{i\vartheta}f_s)
\]
yields a coherent family for every~$\vartheta\in[0,2\pi)$ and~$\vartheta$ acts as a \emph{phase} for the family of coherent sets. One has the same evolution rule as in~\eqref{eq:complex family}:
\[
\P_{s,t}f_s^{\vartheta} = \mathrm{Re}(e^{i\vartheta+\mu (t-s)}f_t)\,.
\]

\end{remark}

\paragraph{Some notes on the spectrum of $\augIG$.}

The differentiation operator~$\partial_{\theta}$, acting on~$C^1(\tau S^1)$ has the eigenfunctions~$\psi_k(\theta) = \exp(2\pi i k \theta/\tau)$,~$k\in\Z$, with corresponding eigenvalues~$2\pi i k \tau^{-1}$. If~$\aug{f}$ is an eigenfunction of~$\augIG$ for some eigenvalue~$\lambda$, then~$\aug{g}$ defined by~$g_{\theta} = f_{\theta} \psi_k(\theta)$ is also an eigenfunction of~$\augIG$, since
\begin{equation}
\begin{aligned}
(\augIG \aug{g})(\theta,\cdot) &= \IG_{\theta} g_{\theta} - \partial_{\theta} g_{\theta}\\
&= (\underbrace{\IG_{\theta} f_{\theta} -\partial_{\theta} f_{\theta}}_{=(\augIG \aug{f})(\theta,\cdot) = \lambda f_{\theta}})\psi_k(\theta) - 2\pi ik\tau^{-1} f_{\theta}\psi_k(\theta) \\
&= (\lambda-2\pi ik\tau^{-1}) \aug{g}(\theta,\cdot)\,.
\end{aligned}
\label{eq:spectral_shift}
\end{equation}
Thus the point spectrum of~$\augIG$ is invariant under translation by the multiples of the scaled imaginary unit~$2\pi i\tau^{-1}$. We call such translates of an eigenvalue ``companion eigenvalues''. We shall also analyze in the following how this spectral property is distorted by the numerical discretization.

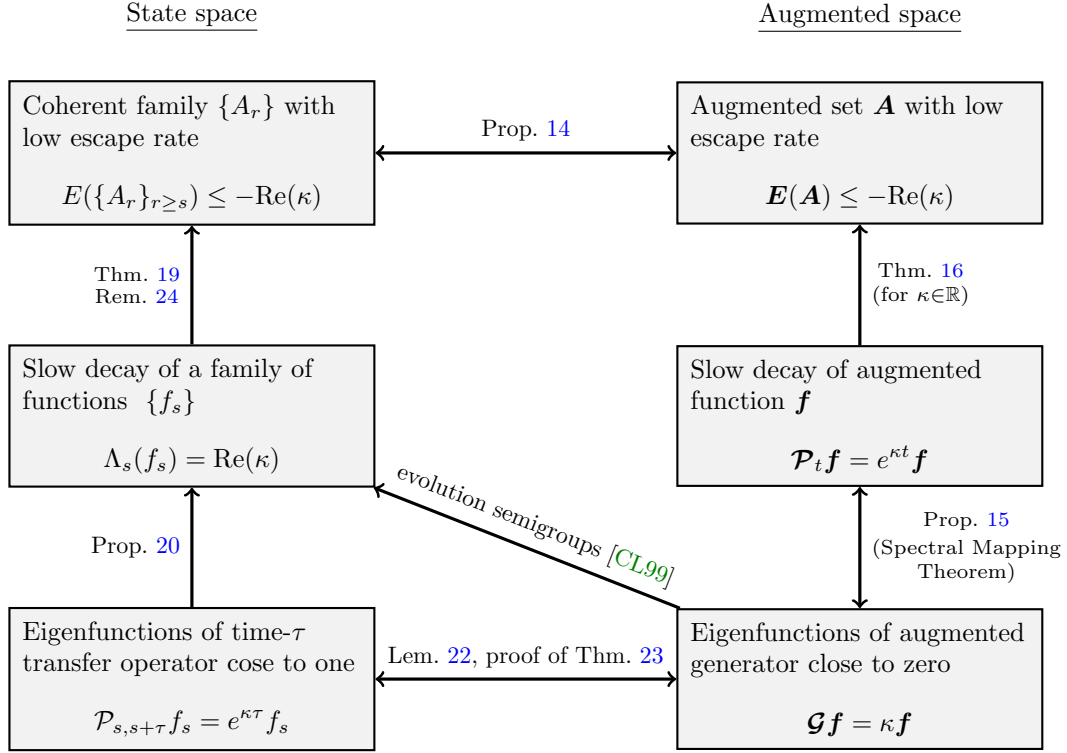
\begin{figure}[h]
\centering

\begin{tikzpicture}
[auto,font=\footnotesize,
every node/.style={node distance=3.5cm},
object/.style={
rectangle, thick, draw, fill=black!5, inner sep=5pt, text width=4.5cm, minimum height=1.5cm
}] 


\node [object] (cohfam) {
Coherent family~$\{A_r\}$ with low escape rate
\[
E(\{A_r\}_{r\ge s}) \le -\mathrm{Re}(\kappa)
\]
};

\node [object, right=4cm of cohfam] (augcohfam) {
Augmented set~$\aug{A}$ with low escape rate
\[
\aug{E}(\aug{A}) \le -\mathrm{Re}(\kappa)
\]
};

\node [object, below of=cohfam] (decfam) {
Slow decay of a family of functions ~$\{f_s\}$
\[
\Lambda_s(f_s) = \mathrm{Re}(\kappa)
\]
};

\node [object, below of=augcohfam] (decfun) {
Slow decay of augmented function~$\aug{f}$
\[
\aug{\P}_t\aug{f} = e^{\kappa t}\aug{f}
\]
};

\node [object, below of=decfam] (eigper) {
Eigenfunctions of time-$\tau$ transfer operator cose to one
\[
\P_{s,s+\tau} f_s = e^{\kappa\tau} f_s
\]
};

\node [object, below of=decfun] (augeig) {
Eigenfunctions of augmented generator close to zero
\[
\augIG \aug{f} = \kappa \aug{f}
\]
};

\node[above= 0.5cm of cohfam] (stsp) {\underline{State space}};
\node [above=0.5cm of augcohfam] (augsp) {\underline{Augmented space}};



\path[<->,very thick]
(cohfam) edge [above] node {\scalebox{0.9}{Prop.~\ref{prop:escapeduality}}} (augcohfam)
(augeig) edge [right] node {\scalebox{1.1}{$\substack{\text{Prop.~\ref{prop:SMT}}\\[2pt] \text{(Spectral Mapping} \\ \text{Theorem)}}$}} (decfun)
;

\path[->,very thick]
(decfam) edge [left] node { \scalebox{1.1}{$\substack{\text{Thm.~\ref{thm:nonauto_escrate}}\\[2pt] \text{Rem.~\ref{rem:complex}}}$}}
(cohfam)
(decfun) edge [right] node { \scalebox{1.1}{$\substack{\text{Thm.~\ref{thm:cont_time_escrate}}\\[2pt] (\text{for }\kappa\in\R)}$}} (augcohfam)
(augeig) edge [midway, above, sloped] node { \scalebox{0.9}{evolution semigroups~\cite{ChiLa99}}} (decfam)
(eigper) edge [left] node { \scalebox{0.9}{Prop.~\ref{prop:CohPoiB}}} (decfam)
;
\path[<->,very thick]
(augeig) edge [above] node {\scalebox{0.9}{Lem.~\ref{lem:spectral_con}, proof of Thm.~\ref{thm:IGcohPeriodic}}} (eigper)
;
\end{tikzpicture} 
\caption{Connections between the different objects and concepts introduced and derived in sections~\ref{sec:cohfam}--\ref{sec:genaug}. The objects on the left-hand side live in or on the state space~$X$, while those on the right-hand side live in or on the augmented space~$\aug{X}$. The arrows indicate that from the objects in the box at the tail of an arrow one can construct objects in the box at the head of that arrow. The precise statements, and the conditions under which they hold true, can be found in the main text.}
\label{fig:connections}
\end{figure}

\section{Numerical analysis}
\label{sec:numanal}

Our general strategy is going to be to solve the eigenproblem in the bottom right corner of Figure~\ref{fig:connections} numerically, and then use our results to move to the top left corner of this figure and infer quantitative information about coherent families in the system at hand.

\subsection{Galerkin and Petrov--Galerkin methods}
\label{ssec:Galerkin}

The Galerkin discretization of an operator $\mathcal A$ over some Hilbert space~$\mathcal H$ with scalar product $\langle\cdot,\cdot\rangle$ can be described as follows. Suppose we have a finite-dimensional subspace~$\mathcal V \subset \mathcal H$ with basis~$(\psi_1,\ldots,\psi_k)$ given. The \emph{Galerkin projection} of $\mathcal A$ to $\mathcal V$ is the unique linear operator $A:\mathcal V\to \mathcal V$ satisfying
\begin{equation}
\langle \psi_j, \mathcal A \psi_i\rangle = \langle \psi_j, A\psi_i\rangle,\quad\text{for all } i,j=1\ldots,k\,.
\label{eq:Galerkin}
\end{equation}
If the operator $\mathcal A$ is not given on a Hilbert space, just a Banach space, or for computational reasons, it can be advantageous to take \emph{basis functions} (with respect to which the projected operator is defined) and \emph{test functions} (which serve in \eqref{eq:Galerkin} to project objects not necessarily living in the same subspace) from different sets.

If $\mathcal A:\mathcal X\to \mathcal X$ is an operator on a Banach space $\mathcal X$, $\mathcal V\subset \mathcal X$ a subspace with basis $(\psi_1,\ldots,\psi_k)$, $\mathcal V^*\subset\mathcal X^*$ a subspace of the dual of $\mathcal X$ with basis $(\psi_1^*,\ldots,\psi_k^*)$, in particular~$\mathrm{dim}\mathcal V = \mathrm{dim}\mathcal V^*$, then the \emph{Petrov--Galerkin} projection of $\mathcal A$ is the unique linear operator $A:\mathcal V\to\mathcal V$ satisfying
\begin{equation}
\langle \psi_j^*, \mathcal A\psi_i\rangle = \langle \psi_j^*,A\psi_i\rangle,\quad\text{for all }i,j=1,\ldots,k\,,
\label{eq:PetrovGalerkin}
\end{equation}
where $\langle\cdot,\cdot\rangle$ denotes the duality bracket. There are two methods of special interest for us.

\paragraph{Ulam's method.}

Let the operator to project be the transfer operator $\P_{s,t}:L^1\to L^1$, and suppose a partition of~$X$ into measurable sets $B_1,\ldots,B_n$ is given. Define the basis functions as the normed characteristic functions over the~$B_i$, that is, $\psi_i = m(B_i)^{-1}\mathbbm{1}_{B_i}$. Since thus $\psi_i\in L^{\infty}$, we can take~$\psi^*_i(f) = \int f\psi_i$, and the corresponding Petrov--Galerkin projection of~$\P_{s,t}$, denoted by $P_n(s,t)\in\R^{n\times n}$, is called the \emph{Ulam discretization} of the transfer operator. Direct computation shows that
\[
P_{n,ij}(s,t) = \prob_{x_s\sim\mathrm{unif}(B_j)}(x_t\in B_i)\,,
\]
where $\mathrm{unif}(B)$ denotes the uniform distribution over the set~$B$.

\paragraph{Collocation.}

If~$\mathcal X$ is a function space satisfying~$\mathcal X\subset C^0(X)$, and~$x_1,\ldots,x_n\in X$, one can take~$\psi^* = \delta(\cdot - x_j) \in C^0(\mathcal X)^*$, the delta distributions centered at the~$x_j$, i.e.\ $\langle\delta(\cdot - x_j),\psi\rangle = \psi(x_j)$. The associated projection is called \emph{collocation}, because it satisfies
\[
(A\psi)(x_i) = (\mathcal{A}\psi)(x_i)\quad\text{ for all }\psi\in\mathcal V,\ i=1,\ldots,n\,.
\]

\subsection{Ulam's discretization for the generator} \label{ssec:GeneratorUlam}

Partition $X$ into $\{B_1,\ldots,B_n\}$, with each $B_k$, $k=1,\ldots,n$ closed, with piecewise smooth boundary, and nonempty interior. For simplicity, let us assume that the~$B_i$ are rectangular sets.

We have seen in section~\ref{ssec:transferop} that the transfer operator family associated with the homogeneous process which is governed by the SDE~$dx_t = v(x_t)dt + \ep dw_t$ is a semigroup, and hence has an infinitesimal generator. The Ulam discretization of this generator consists of two components, to be discussed in the following.

The first takes care of the deterministic part, which corresponds to the drift~$v$. It can be seen as a discretization of the divergence term on the right hand side of the Fokker--Planck equation~\eqref{eq:FPeq}. The Ulam matrix~$P_n$ corresponding to the dynamics governed by the ordinary differential equation~$\dot x(t) = v(x(t))$ satisfies that~$P_n(s,t)$ only depends on~$(t-s)$, since the system is autonomous. Let this Ulam discretization be denoted by~$P_n(t)$. Note that since time evolution and discretization do not commute, in general~$P_n(t+s)\neq P_n(t)P_n(s)$. Nevertheless, we define the first component of the generator matrix as
\begin{equation}
G_n^{\rm drift}:= \frac{d}{dt}P_n(t)\!\Big\vert_{t=0}\,.
\label{eq:gen_drift}
\end{equation}
This limit exists, and can be shown to yield\footnote{In~\cite{FrJuKo13}, the matrix representation of the discrete generator was given with respect to the basis functions~$\psi_i = \mathbbm{1}_{B_i}$, here we give it with respect to the basis functions~$\psi_i = \tfrac{1}{m(B_i)}\mathbbm{1}_{B_i}$. Note also, that our matrix representations act by multiplication from the left, i.e.\ $f\mapsto G_n f$ for~$f\in\R^n$.}~\cite{FrJuKo13}
\[
\renewcommand{\arraystretch}{1.55}
G_{n,ij}^{\rm drift} = \left\{\begin{array}{ll}
										\tfrac{1}{m(B_j)}\int_{\partial B_i\cap\partial B_j} \left\langle v(x), n_j(x)\right\rangle^+\, d\sigma(x), & i\neq j \\
										-\tfrac{1}{m(B_i)}\int_{\partial B_i} \left\langle v(x), n_i(x)\right\rangle^+\, d\sigma(x), & i=j,
									\end{array}\right.
\]
where~$n_j$ denotes the unit outer normal vector on~$\partial B_j$. $G_n^{\rm drift}$ in this latter form is also known as the (spatial part of the) upwind scheme~\cite{Lev02}.

The second component deals with the diffusion, hence corresponds to the term with the Laplace operator in~\eqref{eq:FPeq}. Note that if we would have involved the diffusion in the construction above, then the limit in~\eqref{eq:gen_drift} would have diverged. This is due to the non-Lipschitz nature of Brownian motion, as discussed earlier. The diffusion component is merely going to be the finite difference approximation~$\Delta_n$ of the Laplace operator~$\Delta$ on the grid defined by the centroids of the rectangles~$B_i$, given by its representation with respect to the basis functions~$\psi_i = \tfrac{1}{m(B_i)}\mathbbm{1}_{B_i}$. We define the diffusion component of the generator as
\begin{equation}
G^{\rm diff}_n := \frac{\ep^2}{2}\Delta_n\,.
\label{eq:gen_diff}
\end{equation}
The discrete Ulam generator is then~$G_n:=G^{\rm drift}_n + G^{\rm diff}_n$. It can be readily seen that the matrix~$G_n$ is a \emph{generator matrix}, meaning that~$\exp(tG_n)$ is similar to a column-stochastic matrix for every~$t\ge0$. Hence, it has an eigenvalue~$0$, and all eigenvalues are in the left complex half plane, mimicking this property of its continuous original, the infinitesimal generator~$\IG$. Further properties of~$G_n$, such as convergence to~$\IG$ in an appropriate way, are shown in~\cite[Section~5]{KPPHD}.

\subsection{The discretized augmented generator and its spectrum} \label{ssec:discrete spectrum Ulam}

How does the numerical discretization from Section~\ref{ssec:GeneratorUlam} reflect the theoretical findings~\eqref{eq:spectral_shift} about the augmented generator~$\augIG$?

For the Ulam discretization of the augmented generator the time derivative appears as simple backward finite difference, cf.~\cite[Corollary~4.5]{FrJuKo13}.
More precisely, if~$\aug{v}$ is a discretized function in augmented space,\footnote{In this section we are not going to make any reference to the vector field, previously denoted by~$v$. We use~$v$ for a different object here.} then let~$v_t$, $t=0,h,2h,\ldots,\tau-h$, denote its time slices, where~$h = \tau/n$ for some~$n\in\N$. The backward difference operator~$\delta_t$ is given by $(\delta_t \aug{v})_t = h^{-1}(v_t-v_{t-h})$. By denoting~$\omega := e^{2\pi i h/\tau}$, the eigenvectors of~$\delta_t$ are the vectors $\psi_k = (1,\omega^k,\omega^{2k},\ldots,\omega^{(n-1)k})^T$, $k=0,\ldots,n-1$, with corresponding eigenvalues $\lambda_k := \tfrac{1-\omega^{-k}}{h}$.

Let~$\aug{G}$ denote the (Ulam) discretized augmented generator, and~$\aug{v}$ its eigenvector, such that~$\aug{G}\aug{v} = \lambda \aug{v}$. Further, consider~$\aug{w}$ with~$w_t = v_t\psi_k(t)$. A short computation shows that
\begin{equation}
(\aug{G}\aug{w})_t = \lambda w_t + \frac{\omega^{-k}-1}{h}v_{t-h}\psi_k(t) = \lambda w_t - \lambda_k v_{t-h}\psi_k(t)\,,
\label{eq:tildeGwt}
\end{equation}
which can be seen as a discrete analogue to~\eqref{eq:spectral_shift}, as for~$h\to 0$ we have~$\tfrac{\omega^{-k}-1}{h}\to -2\pi ik/\tau$. Note the time shift in~$\aug{v}$ occurring in~\eqref{eq:tildeGwt}. If~$h$ is small and~$v_t$ sufficiently smooth (in~$t$), then \eqref{eq:tildeGwt} shows that~$\aug{w}$ is close to being an eigenvector at~$\lambda - \lambda_k$, since~$v_t\approx v_{t-h}$.

If we have a discrete equivariant measure not changing in time, i.e.~$G_t v_t = 0$ with~$v_t = v$ for some~$v$ for all~$t$, then the associated~$\aug{w}$ are indeed eigenfunctions of~$\aug{G}$ at~$\lambda - \lambda_k$. This is the case e.g.\ for divergence-free non-autonomous vector fields, where the constant density is stationary. This property is inherited by Ulam's discretization of the generator, since by Gau{\ss}' theorem the total inward and outward fluxes cancel out on the boundary of every set~$B_i$, and thus the constant vector is in the null space of the discrete generator.

\subsection{Hybrid generator} \label{ssec:hybrid}

Rather than approximating the space $\tau S^1\times X$ with a single scheme, we approximate time-dependence using Fourier modes\footnote{In order to obtain a pure real approximation of real objects, for every Fourier frequency~$m$ one has to include the mode with the frequency~$-m$ too. Thus, we use an odd number of modes, $M$.} $e^{2\pi i m t/\tau}, m=-(M-1)/2,\ldots,(M-1)/2$; hence we arrive at a hybrid collocation-Ulam scheme.
The advantages of the Fourier representation are that the time derivative is explicitly known and the space spanned by this finite set of modes is invariant under time differentiation. Moreover, if the temporal change of the vector field is sufficiently smooth, due to the spectral convergence properties of Fourier collocation we expect to obtain good results with only a few modes in time direction, i.e.\ very small~$M$.

In the spatial direction we use Ulam's method for the generator \cite{FrJuKo13}.
Partition~$X$ into $\{B_1,\ldots,B_N\}$, with each $B_n$, $n=1,\ldots,N$, closed with piecewise smooth boundary, and nonempty interior. We allow~$B_j\cap B_k$ to intersect at their common boundary. The finite-dimensional estimate of~$\IG_t$ is an~$N\times N$ matrix~$G(t)$.
We are interested in the projection of the augmented generator~$\aug{\IG}$ onto the space spanned by the (orthogonal) functions $\{\aug{\tilde\psi}_{n,m}\}_{n=1,\ldots,N;\ m=-(M-1)/2,\ldots,(M-1)/2}$, with $\aug{\tilde\psi}_{n,m}(t,x) = |B_n|^{-1}\,e^{2\pi i m t/\tau}\mathbbm{1}_{B_n}(x)$. Note, that to avoid a clash in notation, the Lebesgue measure of some set~$B$ in this section is denoted by~$|B|$.

Since we want to use collocation methods, the Fourier modes shall be represented by a different basis, which is more convenient both for the assembling of the discrete operator and for the evaluation of functions given in that basis. Let $\{\ell_m\}_{m= 1 ,\ldots,M}$ denote the \emph{Lagrangian basis} of the subspace spanned by the Fourier modes with respect to the equidistant collocation nodes $t_l = \tau (l-1)/M$, $l = 1,\ldots,M$. That is, $\ell_m(t_l) = \delta_{l,m}$, where we use the usual Kronecker notation. The basis functions with respect to which we set up our discretization is $\aug{\psi}_{n,m} = |B_n|^{-1}\mathbbm{1}_{B_n}\ell_m$, $n=1,\ldots,N$, $m=1,\ldots,M$.


Now, to discretize~$\augIG$ over the hybrid basis, we employ a Petrov--Galerkin method with $\aug{\psi}^*_{k,\ell}(t,x) =\mathbbm{1}_{B_k}(x)\delta(t-t_l)$, $k = 1,\ldots,N$, $l=1,\ldots,M$. Putting all this into \eqref{eq:PetrovGalerkin}, we obtain the $(n,m),(k,l)$-th entry of the discrete augmented generator~$\aug{G}$,
\begin{equation}
\aug{G}_{(n,m),(k,l)} = -\ell_m'(t_l)\delta_{k,n} + G(t)_{n,k}\delta_{m,l}\,.
\label{eq:hybrid generator}
\end{equation}
%
%
The expected benefit of the hybrid scheme is that we can compute the generator(s) on~$X$ at various times instead of computing on the higher-dimensional space~$\aug{X} = \tau S^1\times X$. If the variation of the vector field in time is sufficiently smooth, then we expect high accuracy of the discretization already with a small number of time collocation points.

Just as in section~\ref{ssec:discrete spectrum Ulam}, we can search for ``companion eigenmodes'' in case of our hybrid discretization~\eqref{eq:hybrid generator}. If~$\aug{v}$ is an eigenfunction of~$\aug{G}$ for the eigenvalue~$\lambda$, then consider~$\aug{w}$ with~$w_t = v_t\psi_k(t)$, where~$\psi_k$ was defined in section~\ref{ssec:discrete spectrum Ulam}. The fact that the derivative of the functions~$\psi_k$, $-\tfrac{M-1}{2}\le k\le \tfrac{M-1}{2}$, is reproduced exactly by spectral differentiation implies together with~\eqref{eq:hybrid generator} that~$\aug{w}$ is an eigenfunction for eigenvalue~$\lambda - \lambda_k$, with~$\lambda_k = 2\pi i k\tau^{-1}$. This matches the analytical shift of the augmented generator, cf.~\eqref{eq:spectral_shift}.

\subsection{Example 1: a periodically driven double gyre flow}
\label{ssec:DG}

We consider the non-autonomous system~\cite{FrPa14}
\begin{equation}
\begin{aligned}
\dot x &= -\pi A \sin\left(\pi f(t,x)\right) \cos(\pi y) \\
\dot y &= \pi A \cos\left(\pi f(t,x)\right) \sin(\pi y)\frac{df}{dx}(t,x),
\end{aligned}
\label{eq:DoubleGyre}
\end{equation}
where $f(t,x)=\alpha \sin(\omega t)x^2+(1-2\alpha\sin(\omega t))x$. We fix the parameter values $A=0.25$, $\alpha=0.25$ and~$\omega=2\pi$, hence the vector field has period~1. The system preserves the Lebesgue measure on~$X = [0,2]\times [0,1]$. Explicit diffusion is added to the system (cf.~\eqref{eq:SDE}) with~$\ep = 0.1$ and reflecting boundary conditions. Equation~\eqref{eq:DoubleGyre} describes two counter-rotating gyres next to each other (the left one rotates clockwise), with the vertical boundary between the gyres oscillating periodically.

We consider the Ulam discretization of the augmented generator, $\aug{G}$, on a uniform~$30\times 100\times 50$ partition of the augmented state space~$\aug{X} = S^1\times X$. The first couple of eigenvalues with the largest real part are (as computed in Matlab by the command \texttt{eigs(G,11,'LR')})
\begin{Verbatim}[commandchars=\\\{\}]
  -0.0000 + 0.0000i
  -0.0832 + 0.0000i
  -0.3160 + 1.1437i
  -0.3160 - 1.1437i
  -0.3663 + 0.0000i
  \underline{-0.6556 + 6.2374i}
  \underline{-0.6556 - 6.2374i}
  \underline{-0.7362 + 6.2443i}
  \underline{-0.7362 - 6.2443i}
  -0.9609 + 7.3794i
  -0.9609 - 7.3794i
\end{Verbatim}
\begin{figure}[h]
\centering
\includegraphics[width=0.48\textwidth]{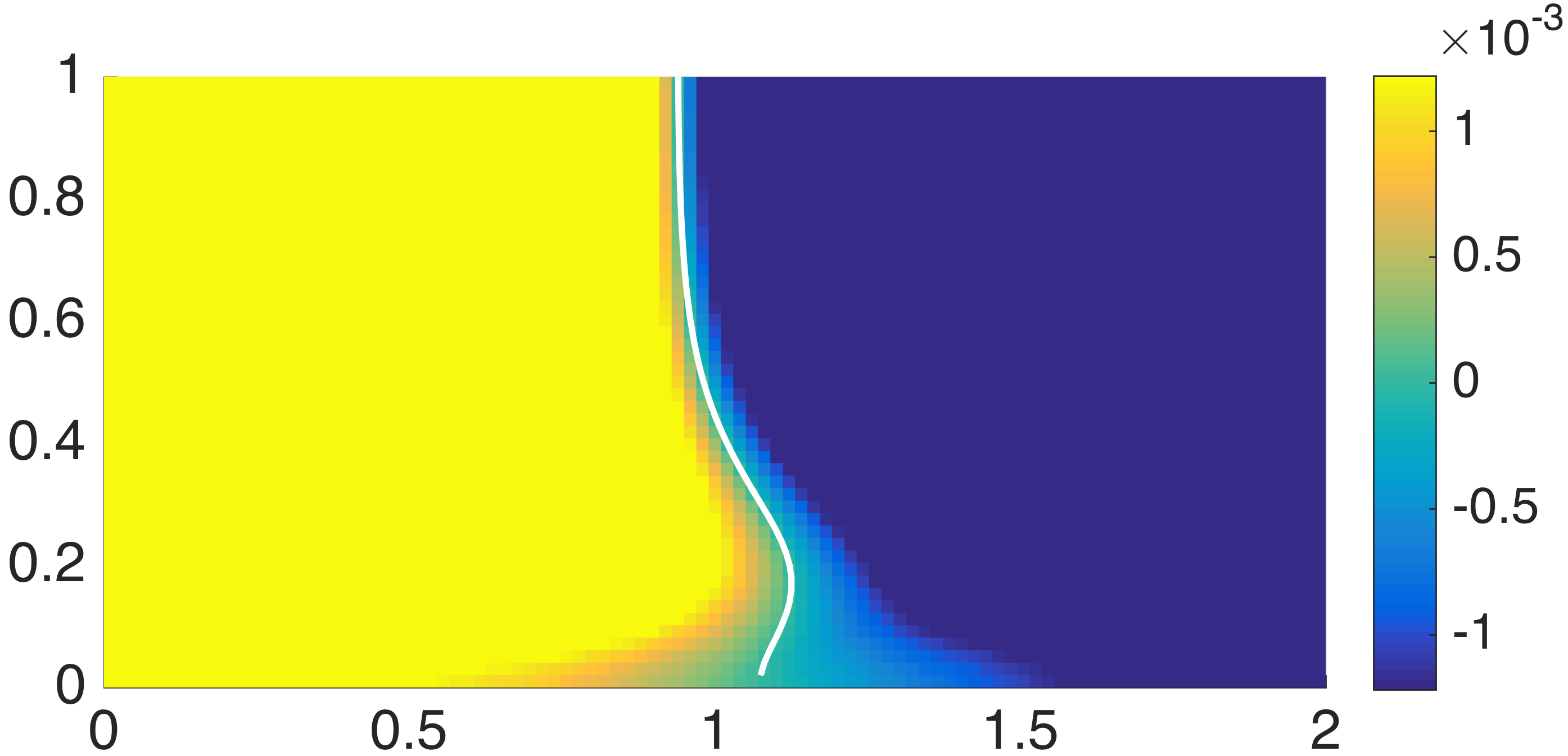}
\hfill
\includegraphics[width=0.48\textwidth]{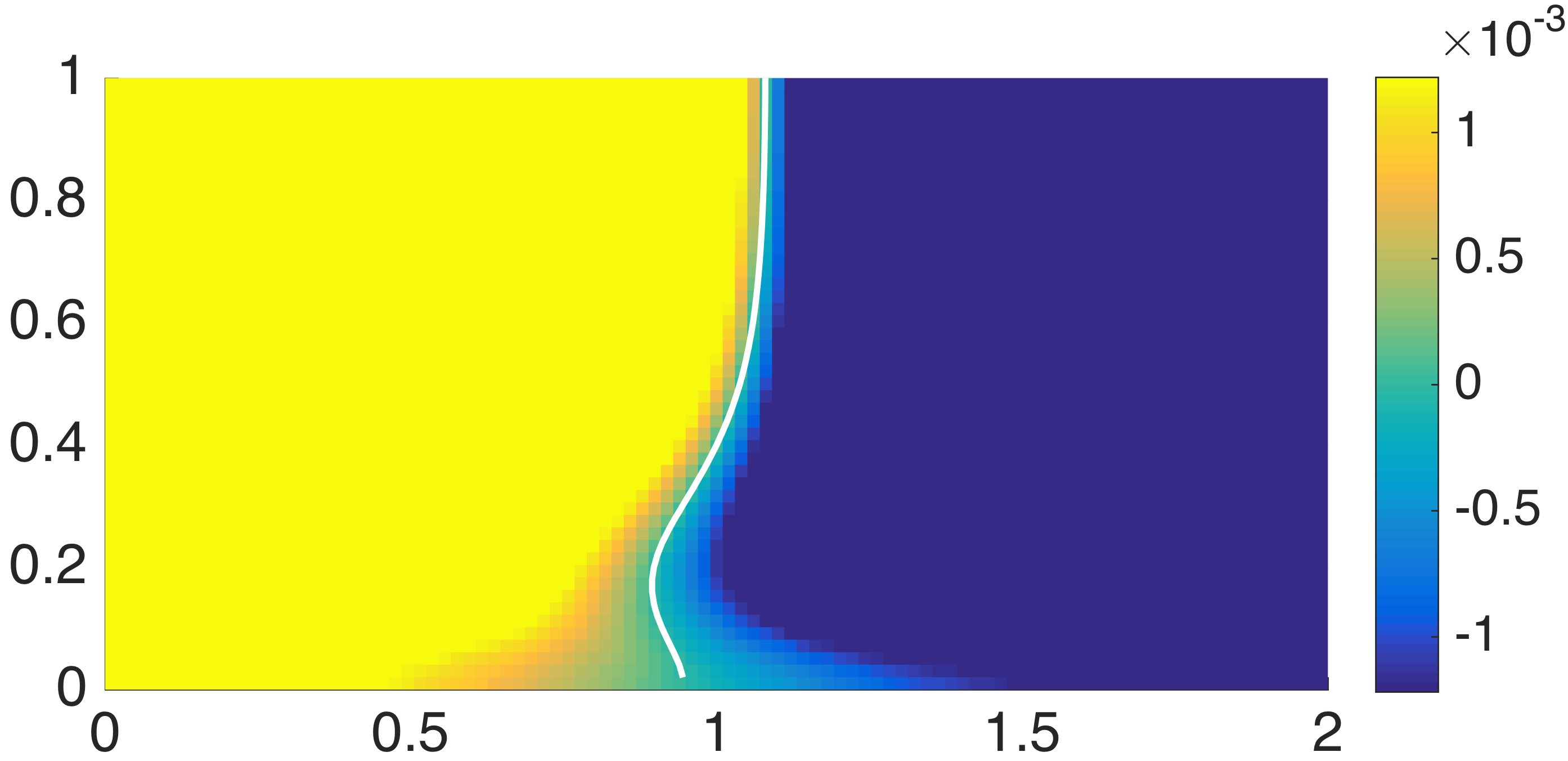}\\

\includegraphics[width=0.48\textwidth]{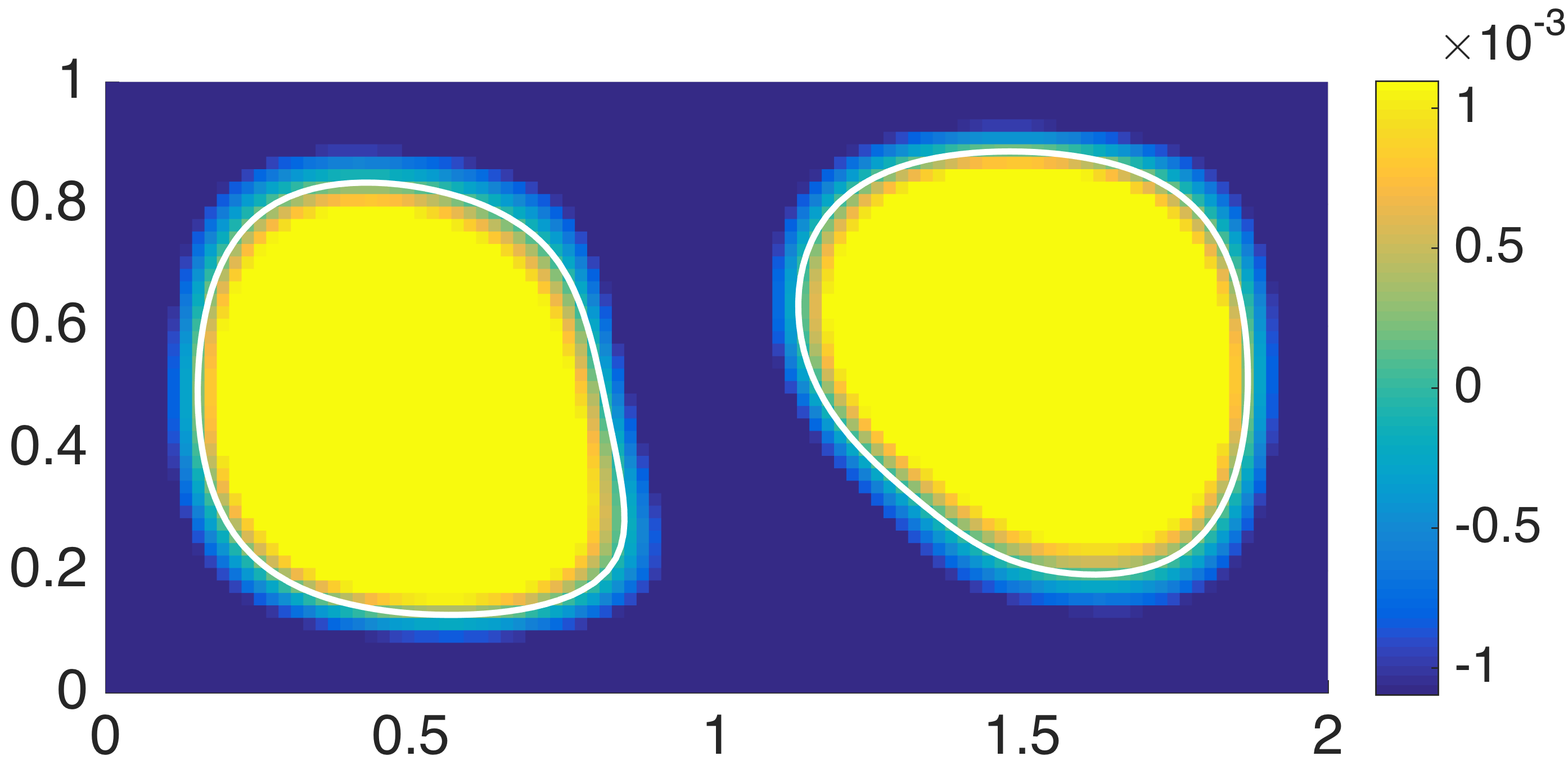}
\hfill
\includegraphics[width=0.48\textwidth]{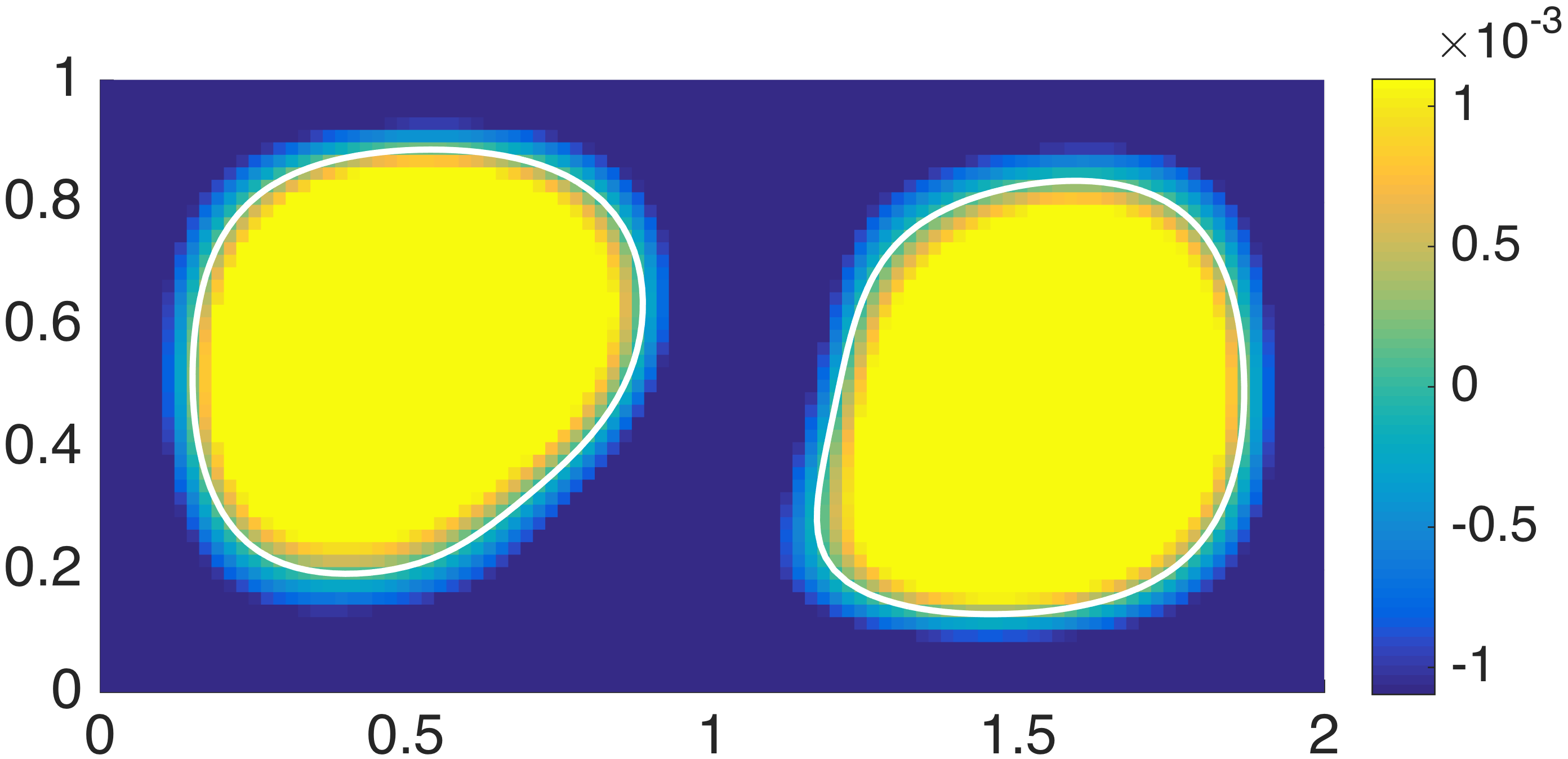}
\caption{Time slices of the eigenvectors for the largest nonzero real eigenvalues (top: largest real eigenvalue $-0.0832$, bottom: second largest real eigenvalue $-0.3663$) at time $t=0$ (left) and $t=0.5$ (right). The white contour indicates the zero-level curve.}
\label{fig:DoubleGyre}
\end{figure}
Since the constant density is invariant at every time instance, we expect~$-\lambda_k = -\tfrac{1-\exp(-2\pi i k/30)}{1/30}$ to be an eigenvalue for every~$k\in\Z$ with an eigenfunction which is constant in the spatial direction (cf.~section~\ref{ssec:discrete spectrum Ulam}). Indeed,~$-\lambda_{\pm 1}$ occurs among the computed eigenvalues, they are the top two underlined eigenvalues.

The second eigenvalue of~$\aug{G}$, about~$-0.0832$, indicates the most coherent family. The corresponding eigenvector~$\aug{v}$ is not constant in time, hence we cannot expect to find a companion eigenvalue of~$\aug{G}$ exactly at $-0.0832-\lambda_{\pm 1}= -0.7388 \pm 6.2374 i$, but we do anticipate an eigenvalue close by with an associated eigenvector close to~$\aug{v}\psi_{\pm 1}$ (the multiplication is meant pointwise for the temporal components). To check this, we compute correlations between the first 20 eigenvectors~$\aug{u}_n$, $n=1,\ldots,20$, and~$\aug{v}\psi_{\pm 1}$, i.e.
\[
c_n^{\pm} = \frac{\left\langle \aug{u}_n,\aug{v}\psi_{\pm 1}\right\rangle}{\|\aug{u}_n\|_2\,\|\aug{v}\psi_{\pm 1}\|_2},
\]
where~$\|\aug{u}\|_2$ is the standard Euclidean norm of the vector~$\aug{u}\in \R^{30\cdot 100\cdot 50}$.
The eigenvectors associated with the bottom two underlined eigenvalues, $-0.7362 \pm 6.2443 i$, yield correlations~$c_{5,6}^{\pm} \ge 0.99$, meanwhile the correlation with the other eigenfunctions does not exceed~$0.02$.

Figure~\ref{fig:DoubleGyre} shows two time slices for the first two subdominant eigenfunctions for real eigenvalues. The top row suggests that the most coherent splitting approximately separates the left and the right gyres from one another, while the bottom row illustrates the coherence of the cores of the gyres. Note, that the zero-level curve of the dominant eigenfunction is a smoothed version of the unstable manifold of the time-independent hyperbolic fixed point~$(1,1)$ of the unperturbed flow. This observation is consistent with, but not identical to, the results in~\cite{FrPa14}.
There, the computations were on a finite time interval, and the boundary between the dominant finite-time coherent sets was close to the stable (resp.\ unstable) manifold of the hyperbolic periodic point at the initial (resp.\ final) time.  Here, we consider coherence over an infinite time (as in~\cite{FrLlSa10}) and the boundaries are always approximately aligned with the \emph{unstable} manifold of the hyperbolic periodic point.

Between the first two real subdominant eigenvalues we find a pair of complex eigenvalues,~$-0.3160 \pm 1.1437i$, indicating a coherent family as well. Let~$\aug{u}$ be the eigenfunction for the third eigenvalue~$\mu = \alpha + \beta i = -0.3160 + 1.1437i$; we extract coherent families from it, as described in Remark~\ref{rem:complex}. To obtain the zero phase coherent families, we set
\[
A^{\pm}_t = \{\pm \P_{0,t}u_0^{\rm Re}\ge 0\} = \left\{\mathrm{Re}(\pm e^{i\beta t}u_t) \ge 0\right\}\,.
\]
Remark~\ref{rem:complex} and Theorem~\ref{thm:nonauto_escrate} show that the escape rate from this family is smaller than~$0.3160$. Figure~\ref{fig:DoubleGyreComplex} shows this coherent family at four different times.
\begin{figure}[htb]
\centering
\includegraphics[width=0.48\textwidth]{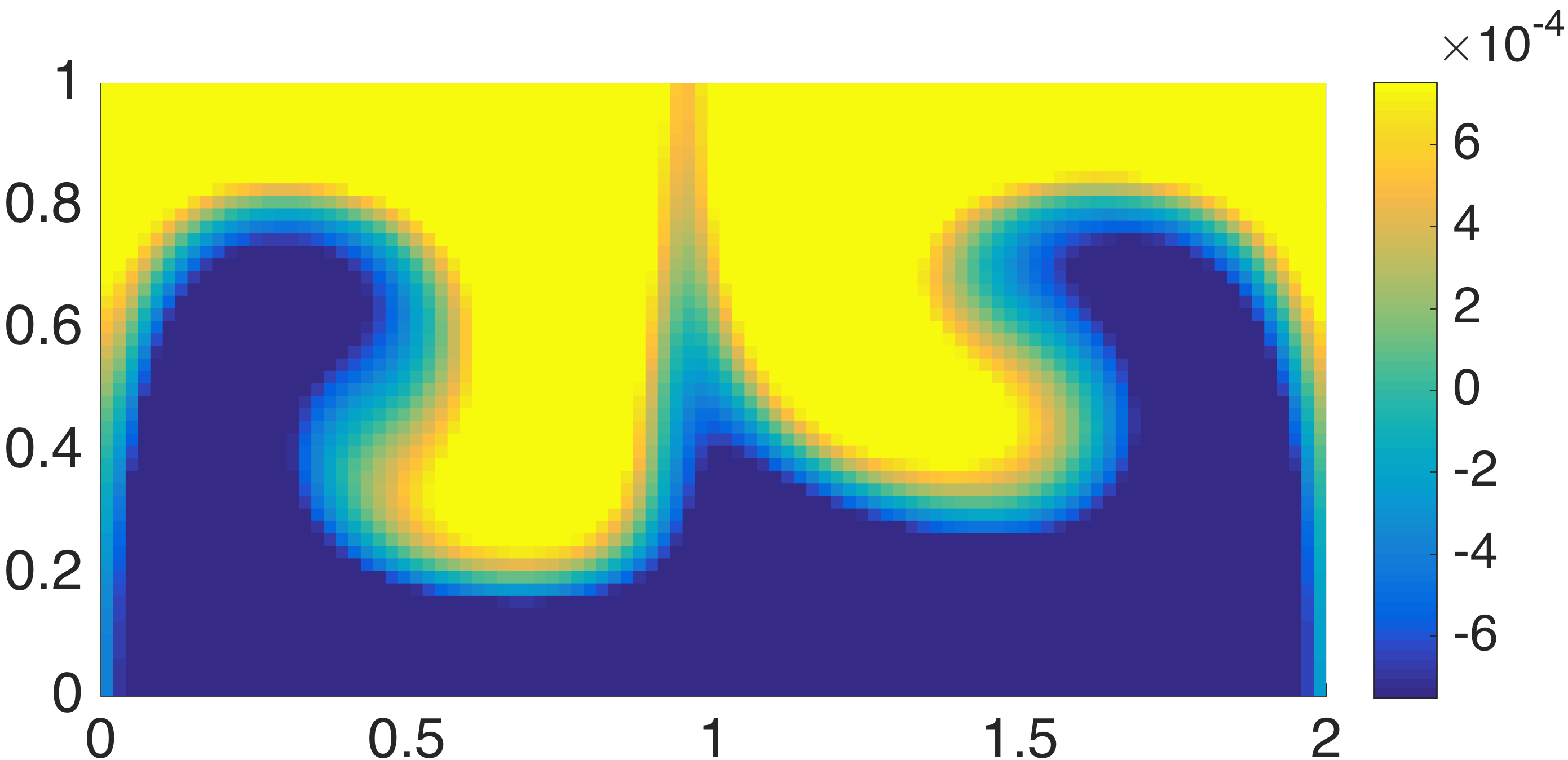}
\hfill
\includegraphics[width=0.48\textwidth]{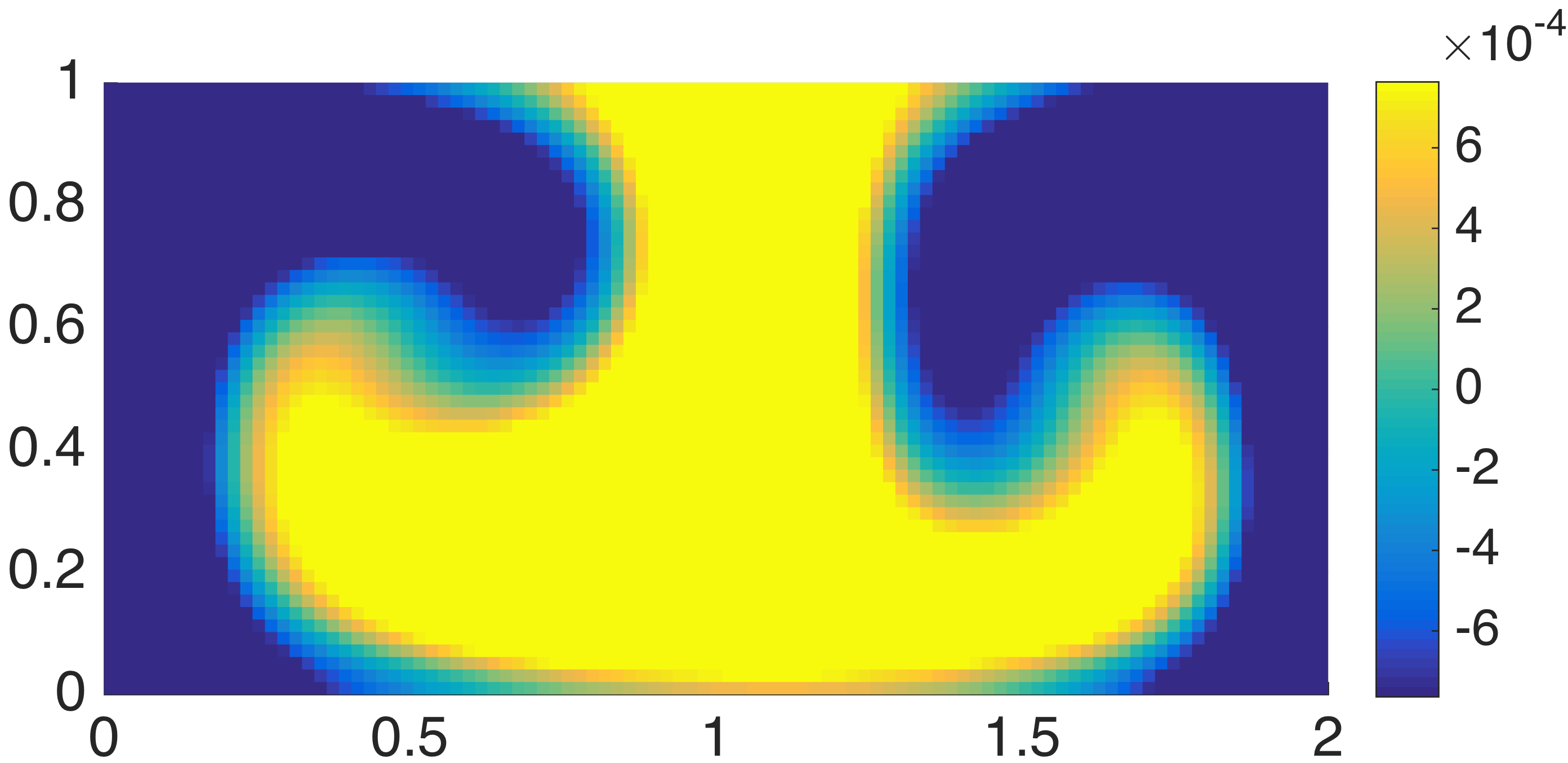}\\

\includegraphics[width=0.48\textwidth]{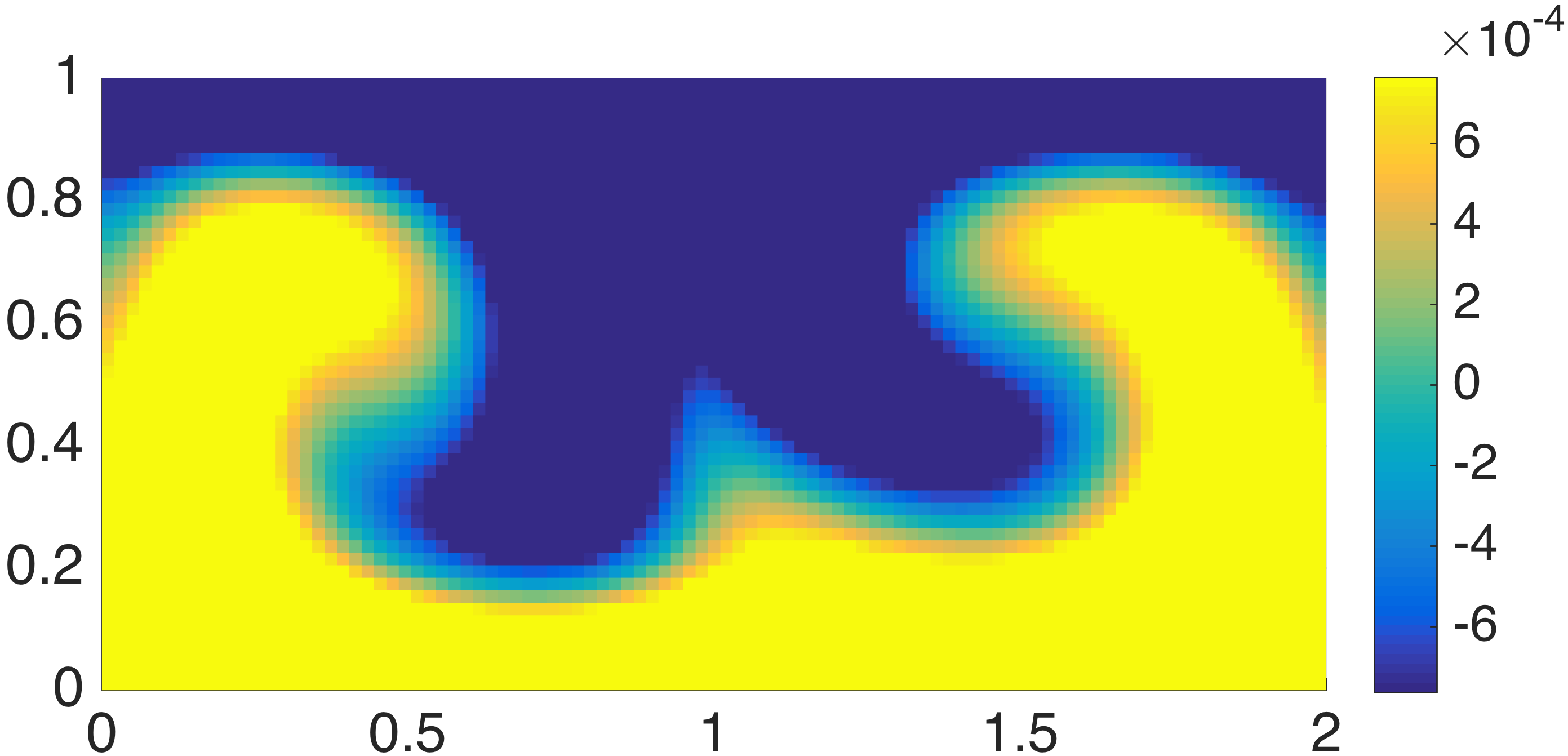}
\hfill
\includegraphics[width=0.48\textwidth]{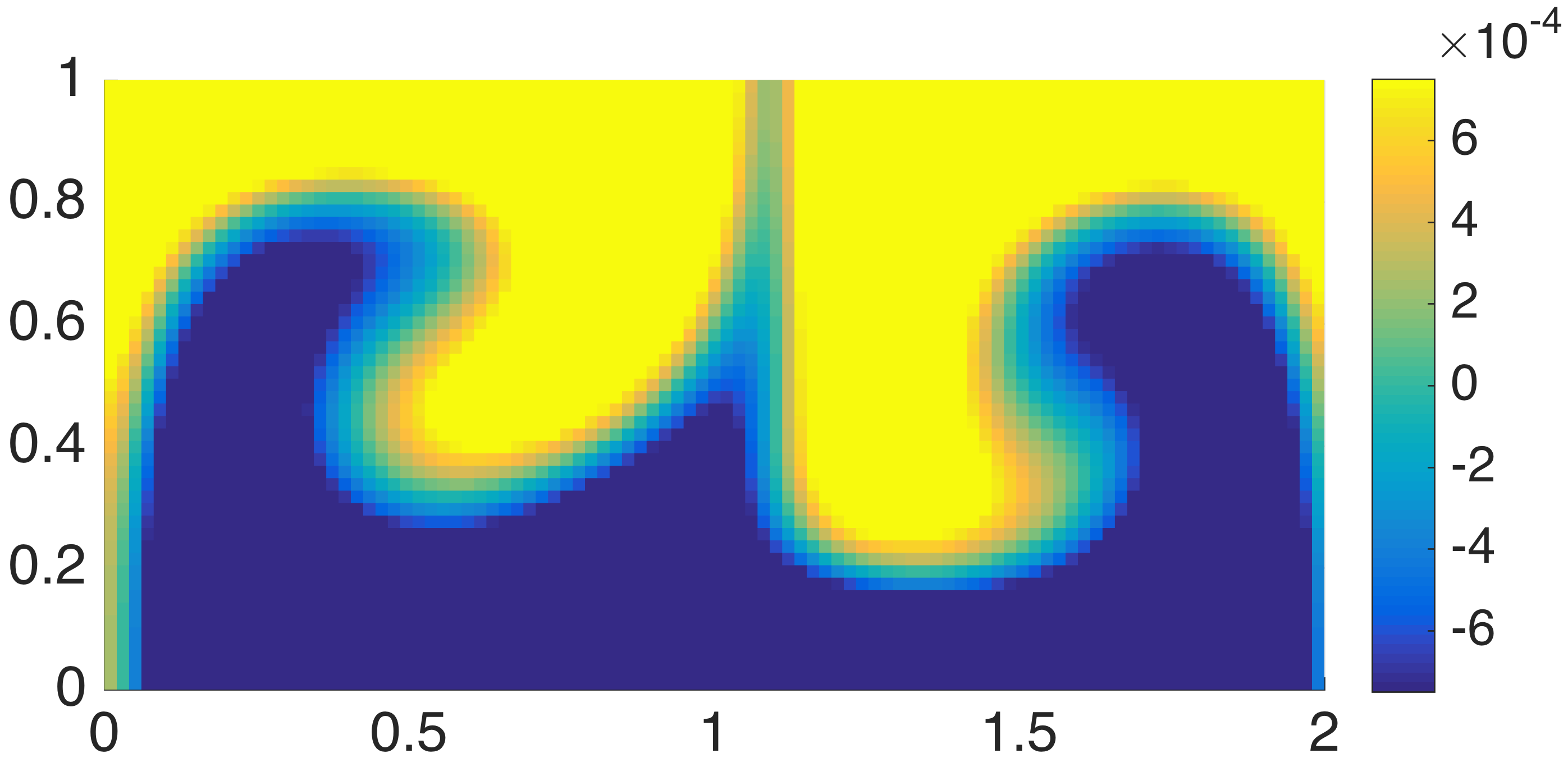}
\caption{The functions~$\mathrm{Re}(e^{i\beta t}u_t)$, indicating the coherent families, for four different times~$t=0,\,1.5,\,3,\,5.46$ (top left, top right, bottom left, and bottom right, respectively).}
\label{fig:DoubleGyreComplex}
\end{figure}
The period of the phase of the family is~$2\pi/\beta \approx 5.49$, and we see in Figure~\ref{fig:DoubleGyreComplex} that after this time the coherent sets seem to have approximately completed one revolution around the centers of the gyres. This time approximately corresponds to the period of trajectories starting on the zero-level curves at the bottom of Figure~\ref{fig:DoubleGyre} around the centers of the associated gyres (this is verified by numerical simulation; not shown here).

Next, we test Theorem~\ref{thm:nonauto_escrate} numerically. For this we choose~50000 uniformly distributed random points in~$X$, of which~25050 lie in the positive support of the~$t=0$ slice of the eigenfunction~$\aug{v}$ for eigenvalue~$-0.0832$; i.e.~in~$\{v_0\ge 0\}$. We integrate all points from~$s=0$ to~$t=10$ (that is, 10 periods) with the Euler--Maruyama scheme (stepsize $1/30$, reflecting boundary conditions). At the end we estimate the escape rate from the family~$\{A_r^{\pm}\}_{r\in S^1}$ by
\[
\hat{E}(\{A_r^{\pm}\}) = -\frac{1}{t-s}\log\left(\frac{\# \text{points that stayed in }A_{r\ \mathrm{mod}\ 1}^{\pm} \text{ for }r=0,\tfrac{1}{30},\tfrac{2}{30},\ldots,10}{\# \text{points that started in } A_0^{\pm}}\right)\,.
\]
After averaging over five runs, we obtain~$\hat{E}(\{A_r^+\}) = 0.0657$ and~$\hat{E}(\{A_r^-\}) = 0.0645$, both being smaller than the associated bound by the eigenvalue~$-\lambda = 0.0832$. Figure~\ref{fig:DoubleGyreEscape1} shows the fraction of ``surviving points'' in time for different realizations of the noise process, with the theoretical bound for the slope given by the corresponding eigenvalue.
Figure~\ref{fig:DoubleGyreEscape2} shows the dynamical evolution of points starting in the coherent families~$\{A_r^{\pm}\}$ for two different times.
\begin{figure}[htb]
\centering
\includegraphics[width=0.6\textwidth]{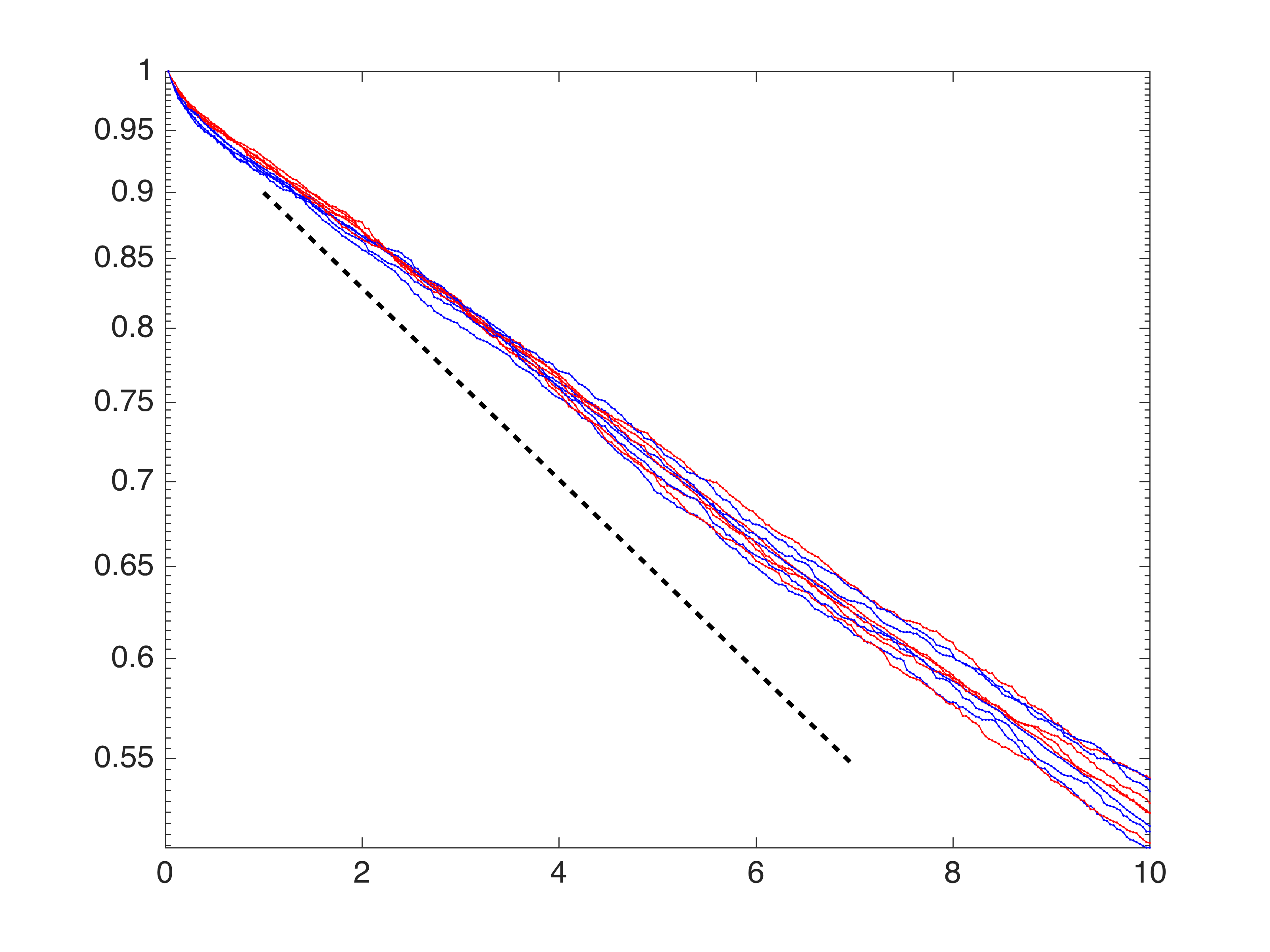}

\caption{Numerical simulation of escape rates at anchor time~$s=0$. We show the fraction of points that stay in the coherent family until time~$t$ versus~$t$ for five test runs. Blue curves correspond to escape rates from~$\{A_r^+\}$, red to those from~$\{A_r^-\}$. The dashed line has slope given by the second eigenvalue.}
\label{fig:DoubleGyreEscape1}
\end{figure}
\begin{figure}[htb]
\centering
\includegraphics[width=0.48\textwidth]{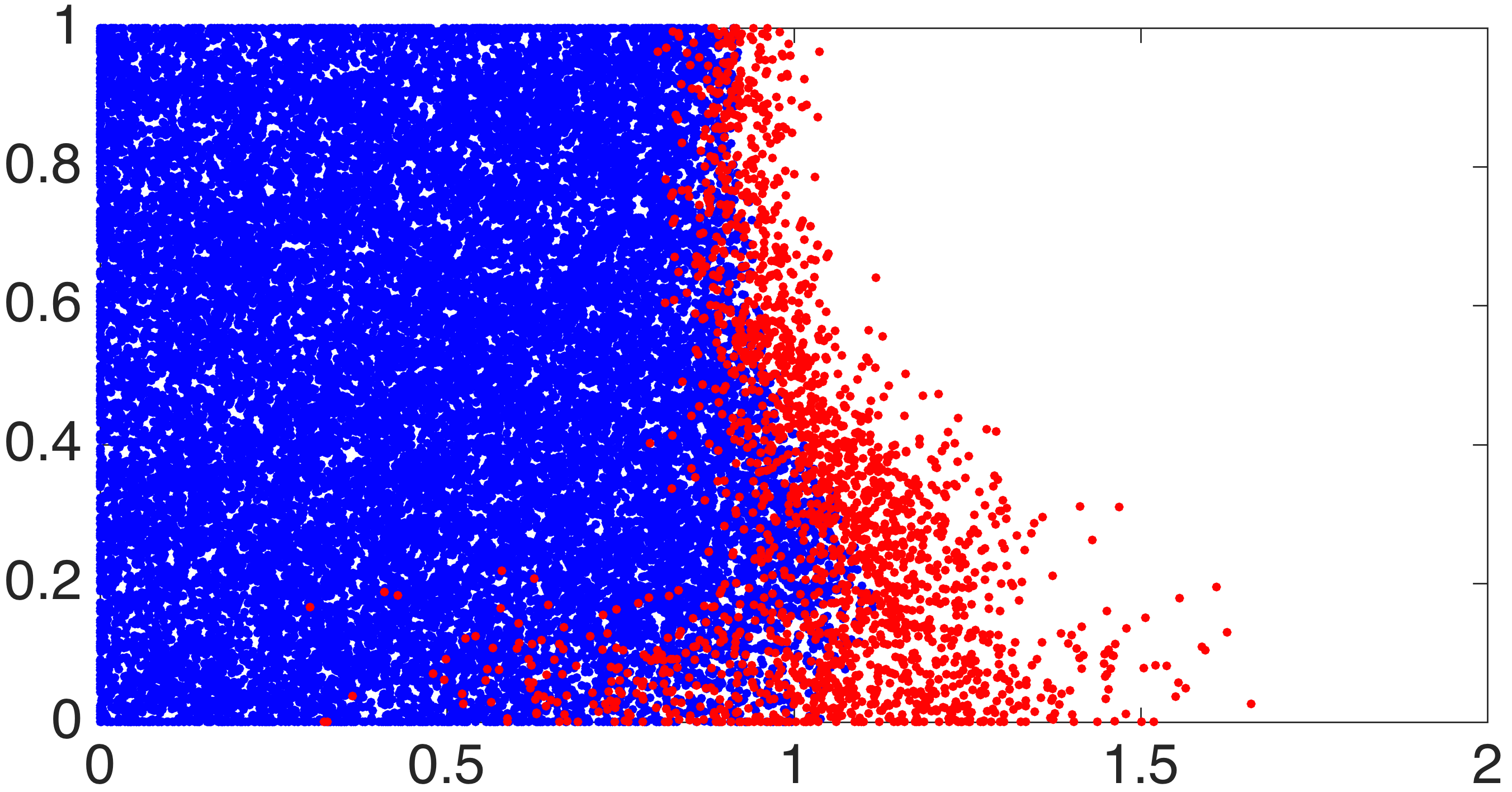}
\hfill
\includegraphics[width=0.48\textwidth]{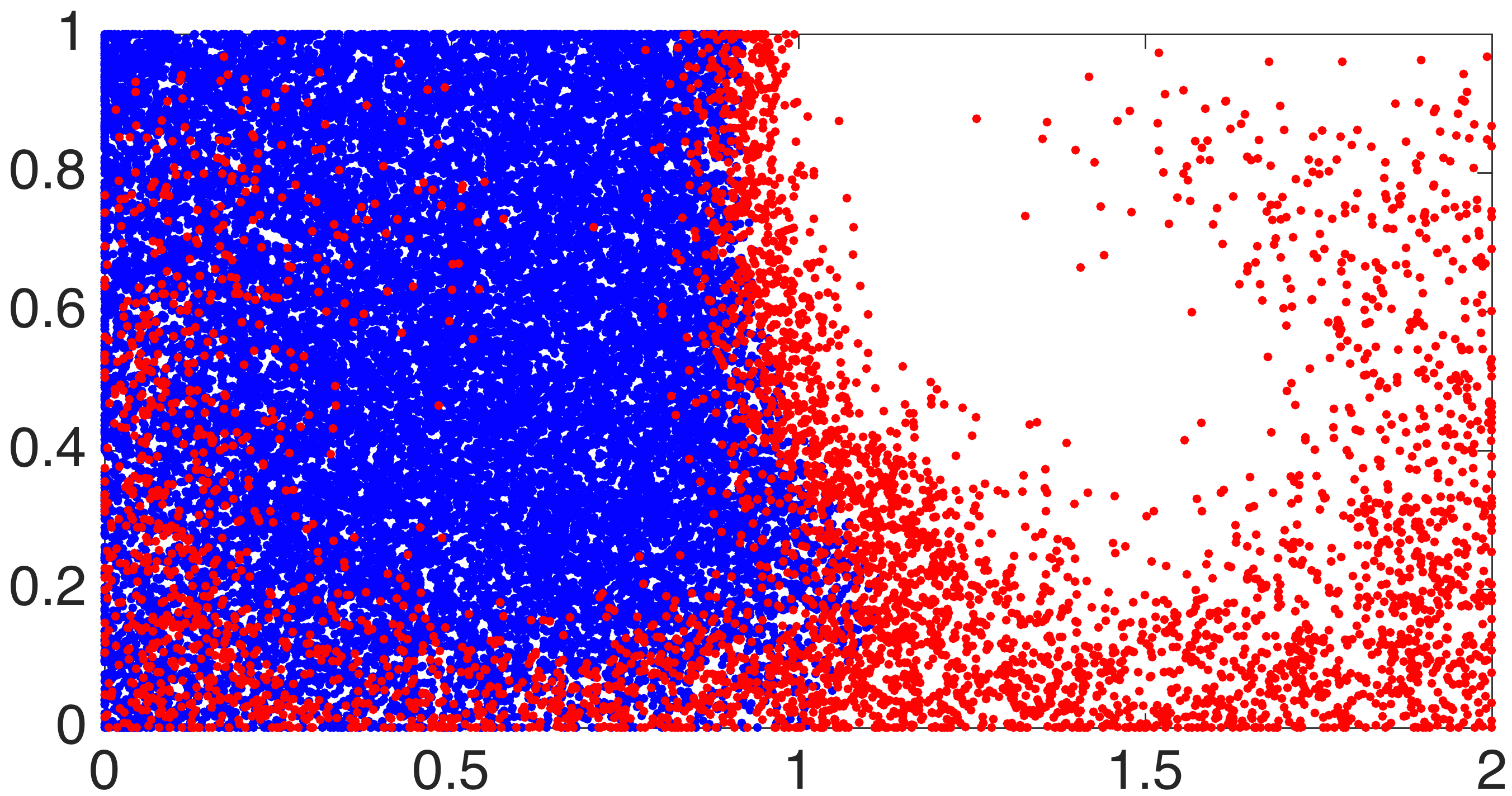}\\
\includegraphics[width=0.48\textwidth]{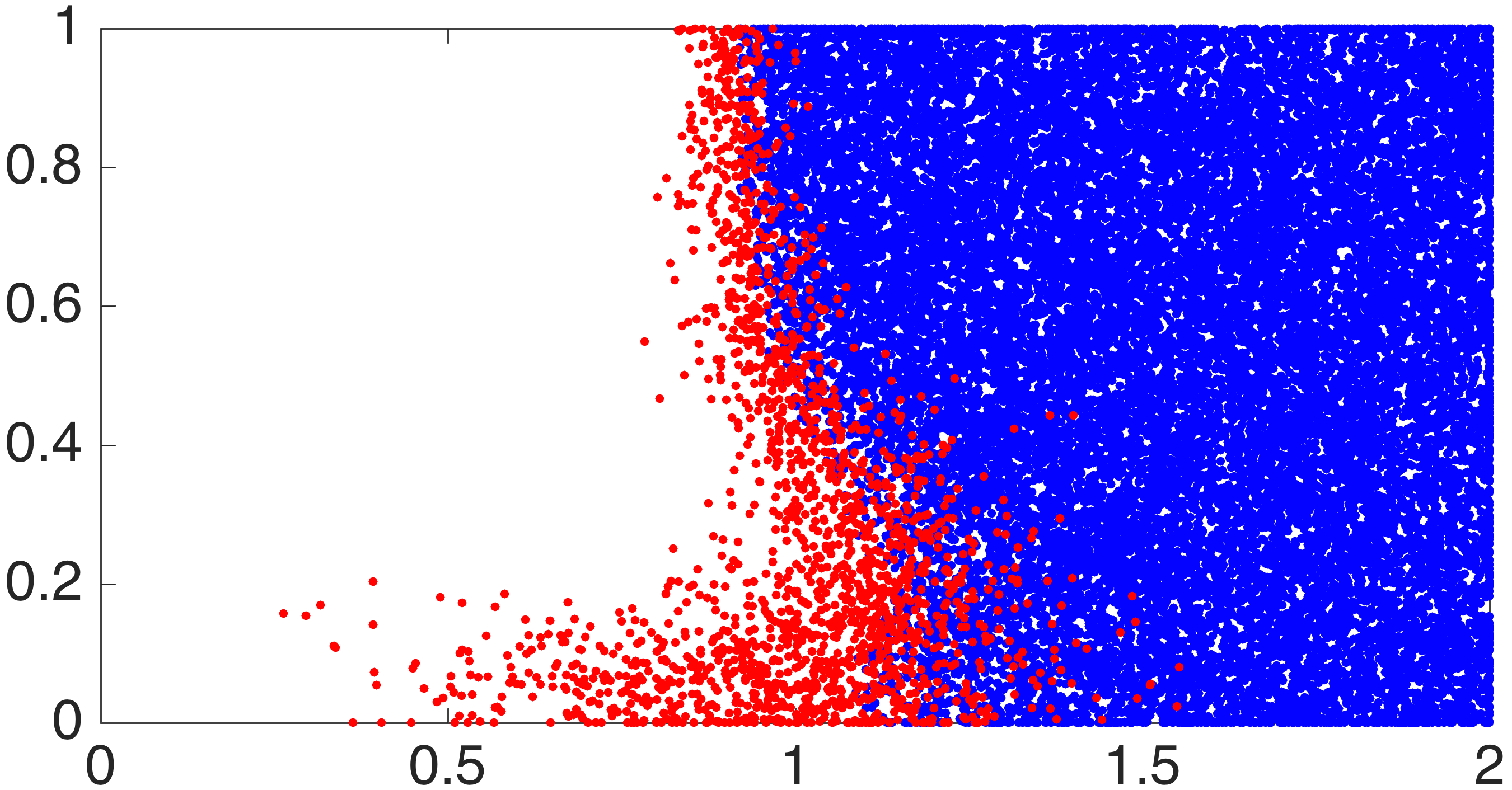}
\hfill
\includegraphics[width=0.48\textwidth]{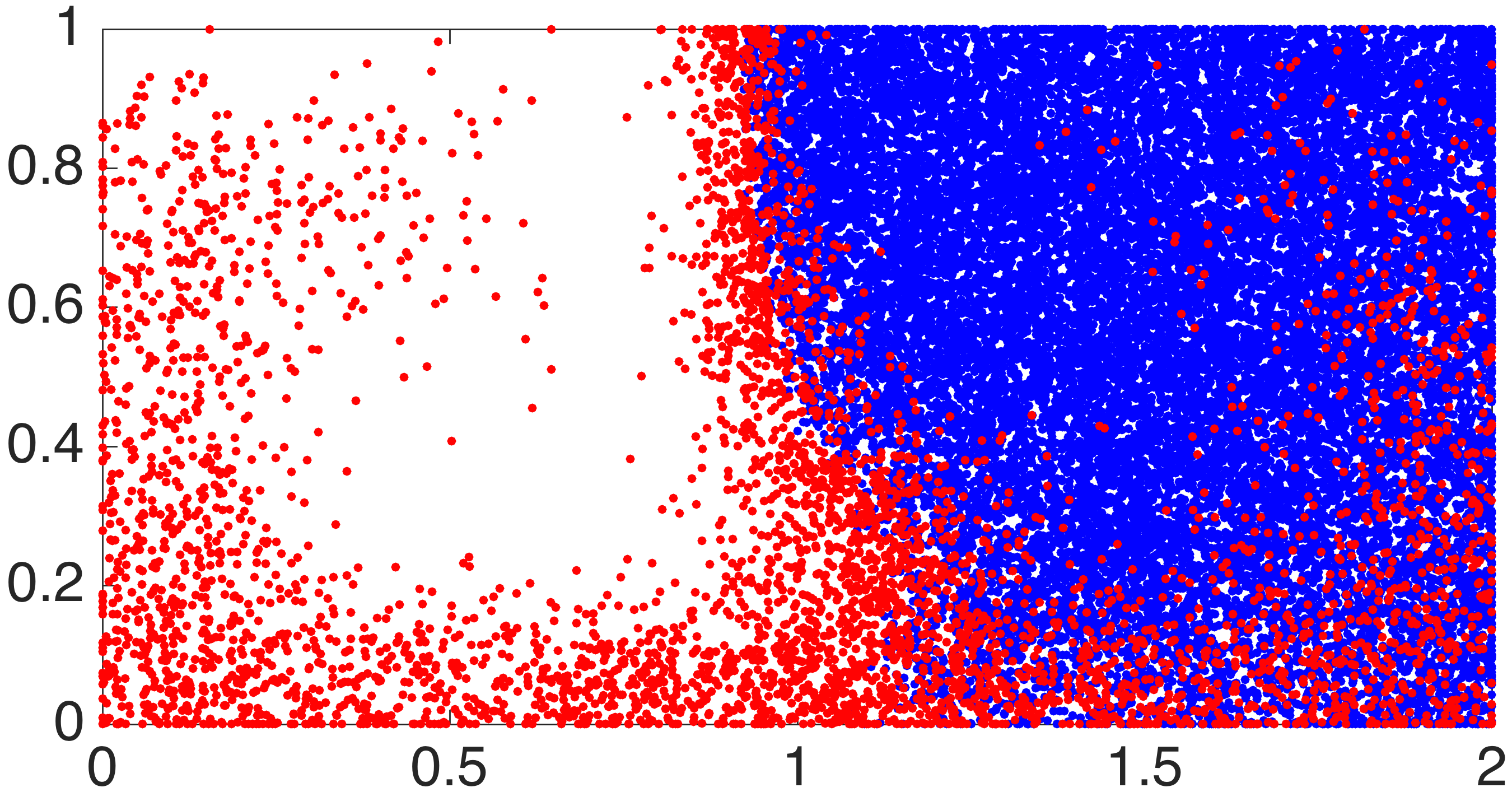}
\caption{Numerical simulation of escape rates at anchor time~$s=0$. The points colored blue stayed in the coherent family~$\{A_r^{\pm}\}$ for all times until time~$t=1$ (left) and~$t=3$ (right). The points that left the family at least once until time~$t$ are colored red. Top: coherent family~$\{A_r^+\}$, bottom: coherent family~$\{A_r^-\}$.}
\label{fig:DoubleGyreEscape2}
\end{figure}

\begin{remark}
In~\cite{froyland_padberg_09,FrPa14} a line-search along different level sets is used to obtain optimal finite-time coherent sets. In the current setup we can employ a similar procedure. Recall from Theorem~\ref{thm:1=2} and expression~\eqref{eq:instoutflow} that the cumulative outflow flux from the coherent family over one time period is given by the instantaneous outflow flux from the augmented set in augmented space. The entries of Ulam's discretization of the augmented generator are exactly box-to-box instantaneous flow rates, cf.~section~\ref{ssec:GeneratorUlam}. Hence, for any sub- or superlevel set of a given eigenfunction, the cumulative outflow flux can be computed from the discrete augmented generator~$\aug{G}$ in complexity that is linear in the number of boxes. Then, one would search for a level set optimizing the cumulative outflow flux to augmented volume ratio.
\end{remark}

\subsection{Example 2: periodically perturbed Bickley jet}

We consider a perturbed Bickley jet as described in~\cite{RypEtAl07}. This is an idealized zonal jet approximation in a band around a fixed latitude, assuming incompressibility, on which two traveling Rossby waves are superimposed.
The dynamics is given by
\[
\dot x = -\frac{\partial\Psi}{\partial y},\quad \dot y = \frac{\partial\Psi}{\partial x},
\]
with stream function
\[
\Psi(t,x,y) = -U_0 L \tanh\big(\frac{y}{L}\big)
 + U_0L\,\mathrm{sech}^2\big(\frac{y}{L}\big) \sum_{n=2}^3 A_n\cos\left(k_n\left(x- c_n t\right)\right)\,.
\]
The constants are chosen as in~\cite{RypEtAl07}, the length unit is Mm (1 Mm = $10^6$ m), the time unit is days. Then
\[
U_0 = 5.4138,\quad L = 1.77, \quad A_2 = 0.1, \quad A_3 = 0.3\,.
\]
We set~$k_n = 2n/r_e$ with~$r_e = 6.371$. The phase speeds~$c_n$ of the Rossby waves are modified slightly so that the forcing is periodic with the smallest common period~$\tau = 9\, \rm days$. We choose~$c_2 = 0.2054\, U_0$ and $c_3 = 0.4108\, U_0$. The state space is periodic in the~$x$ coordinate, and is given by~$X = \pi r_e S^1\times [-4,4]$.

Compared with our previous example, the spatial scale of dynamics is finer, and the temporal change in the vector field includes higher frequencies. Thus, we need higher resolutions for an increased accuracy, hence we employ the hybrid discretization of the augmented generator from section~\ref{ssec:hybrid} with~$\ep=0.1$. We resolve time with~$21$ Fourier modes (such that frequencies from~$-10$ to~$10$ are present), and space with a~$300\times 120$ uniform grid (resulting in almost square boxes).

Since the hybrid discretization allows for purely imaginary ``companion'' eigenvalues to occur by the shifts~$\lambda_k= \lambda_1 k = 0.698 k\,i$ (see the end of section~\ref{ssec:hybrid}), we do not compute the eigenvalues with the largest real part, but those with the smallest magnitude.\footnote{One could ask, what happens if the real eigenvalues we are interested in have larger modulus than the smallest companion eigenvalues? This would mean~$\lambda_2 < -2\pi/\tau$, and the corresponding coherence estimate would be quite miserable by yielding a survivor fraction of~$e^{-2\pi}\approx 0.002$ after one time period. In both of our examples this fraction is around~$0.9$. Note that rescaling time does not help, because this merely multiplies both the eigenvalues of the augmented generator and the companion shifts by the same scalar with which we sped up time. We leave the computational details to the reader.}
Calling \texttt{eigs(G,20,'SM')} the following eigenvalues, which we order according to their real parts.
\begin{Verbatim}[commandchars=\\\{\}]
   0.0000 + 0.0000i
  \fbox{-0.0138 + 0.0000i}
  \fbox{-0.0303 + 0.0000i}
  -0.0332 - 0.1131i
  -0.0332 + 0.1131i
  -0.0333 - 0.1131i
  -0.0333 + 0.1131i
  -0.0443 + 0.0000i
  -0.0671 - 0.0021i
  -0.0671 + 0.0021i
  -0.0684 - 0.0030i
  -0.0684 + 0.0030i
  \fbox{-0.1179 + 0.0000i}
  -0.1322 + 0.0000i
  -0.1431 - 0.1310i
  -0.1703 - 0.0461i
  -0.1703 + 0.0461i
  -0.1703 - 0.0461i
  -0.1703 + 0.0461i
  -0.1864 + 0.0000i
\end{Verbatim}
By comparing the shifts with the imaginary parts of the eigenvalues, we do not expect to have any companion eigenvalues in this list. Figure~\ref{fig:Bickley1} shows the~$t=0,1,2$ (top to bottom) time slices of the eigenfunctions for the framed eigenvalues. As time increases, the general pattern shown by the eigenfunctions shift from left to right.
\begin{figure}[htb]
\centering
\includegraphics[width = 0.32\textwidth]{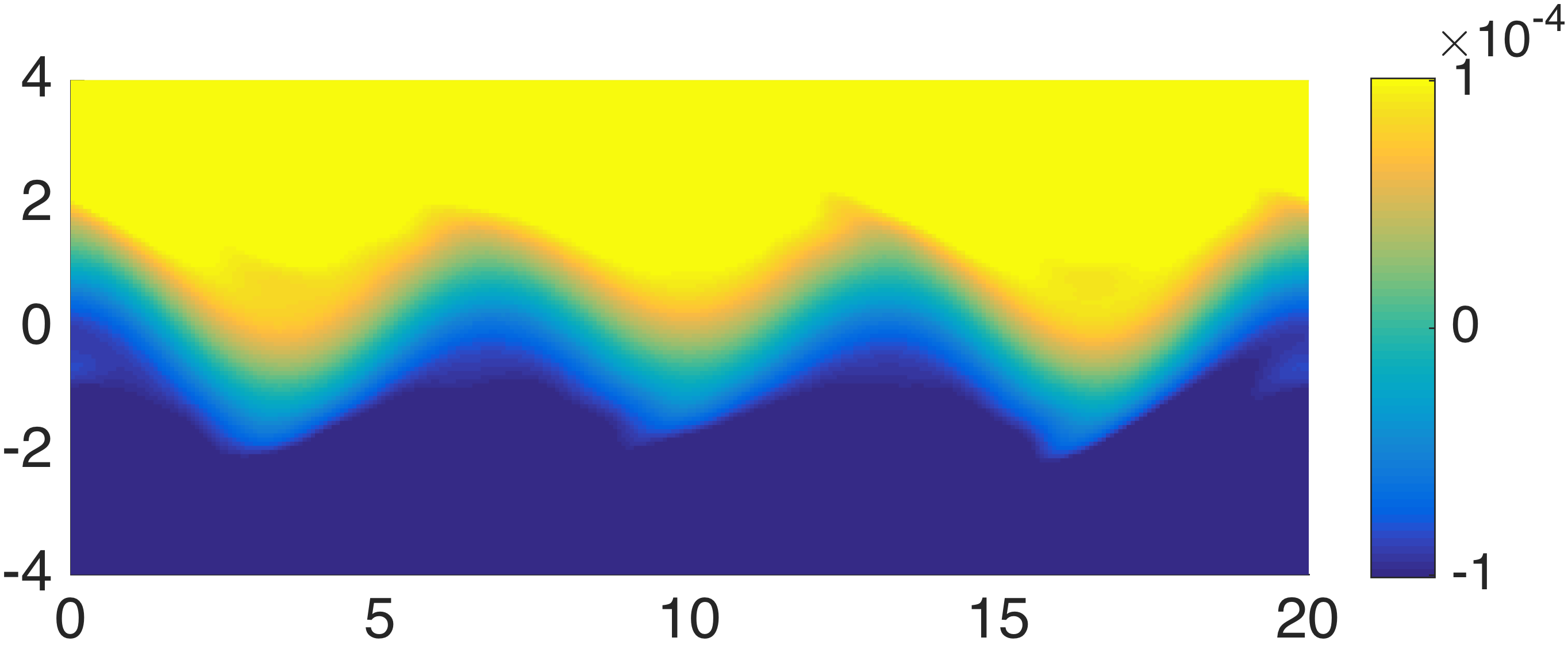} \hfill \includegraphics[width = 0.32\textwidth]{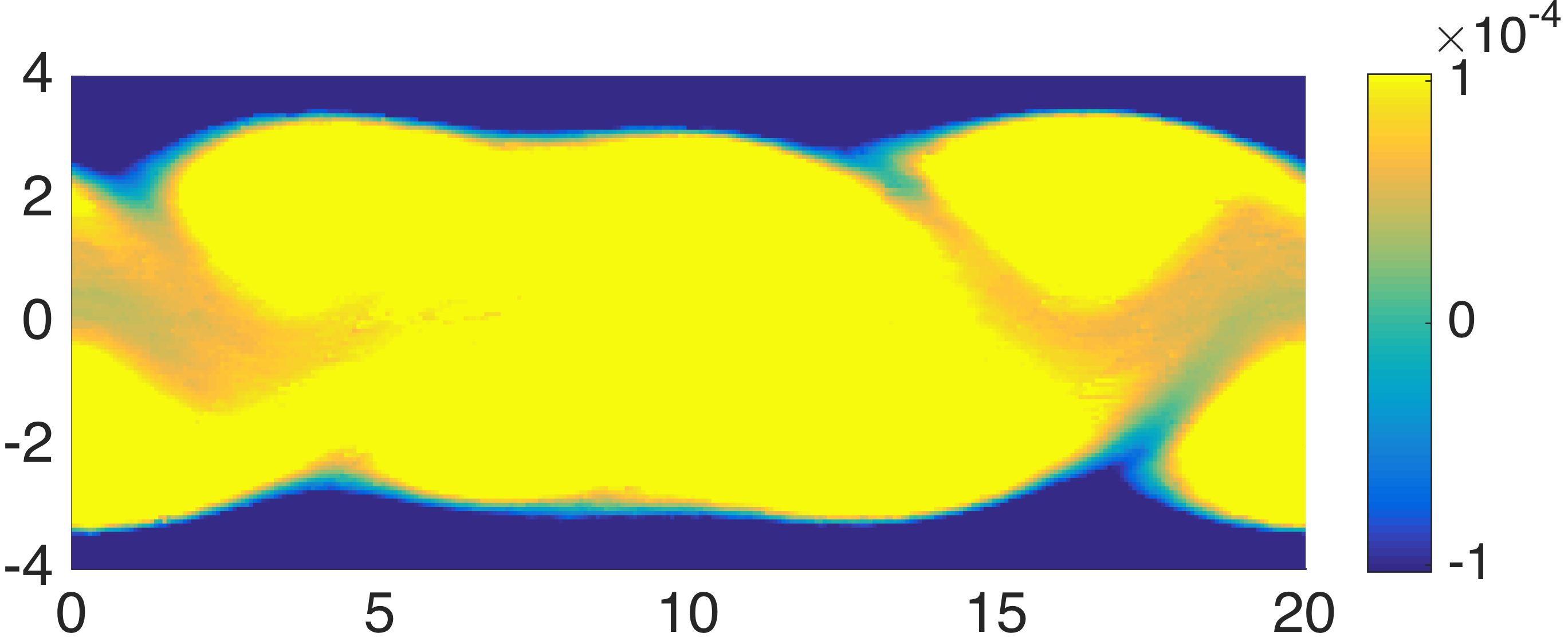} \hfill \includegraphics[width = 0.32\textwidth]{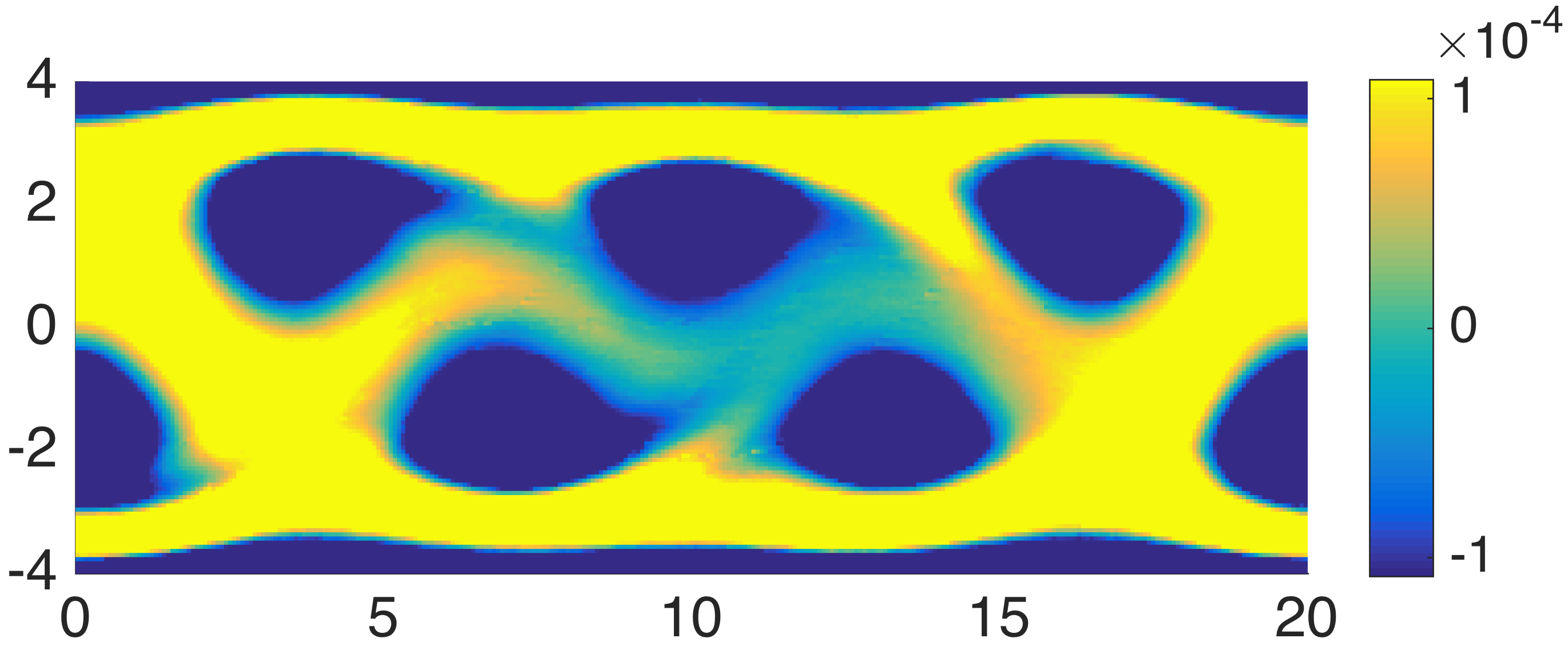}\\

\includegraphics[width = 0.32\textwidth]{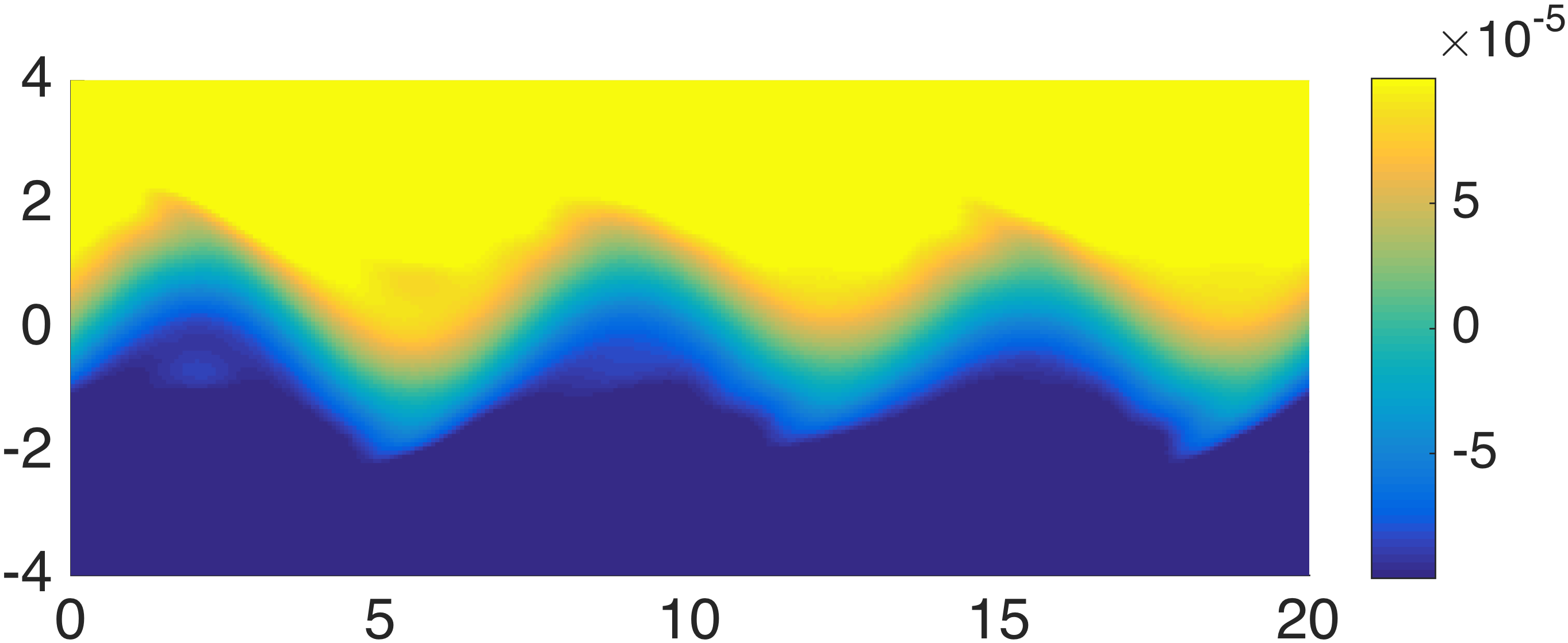} \hfill \includegraphics[width = 0.32\textwidth]{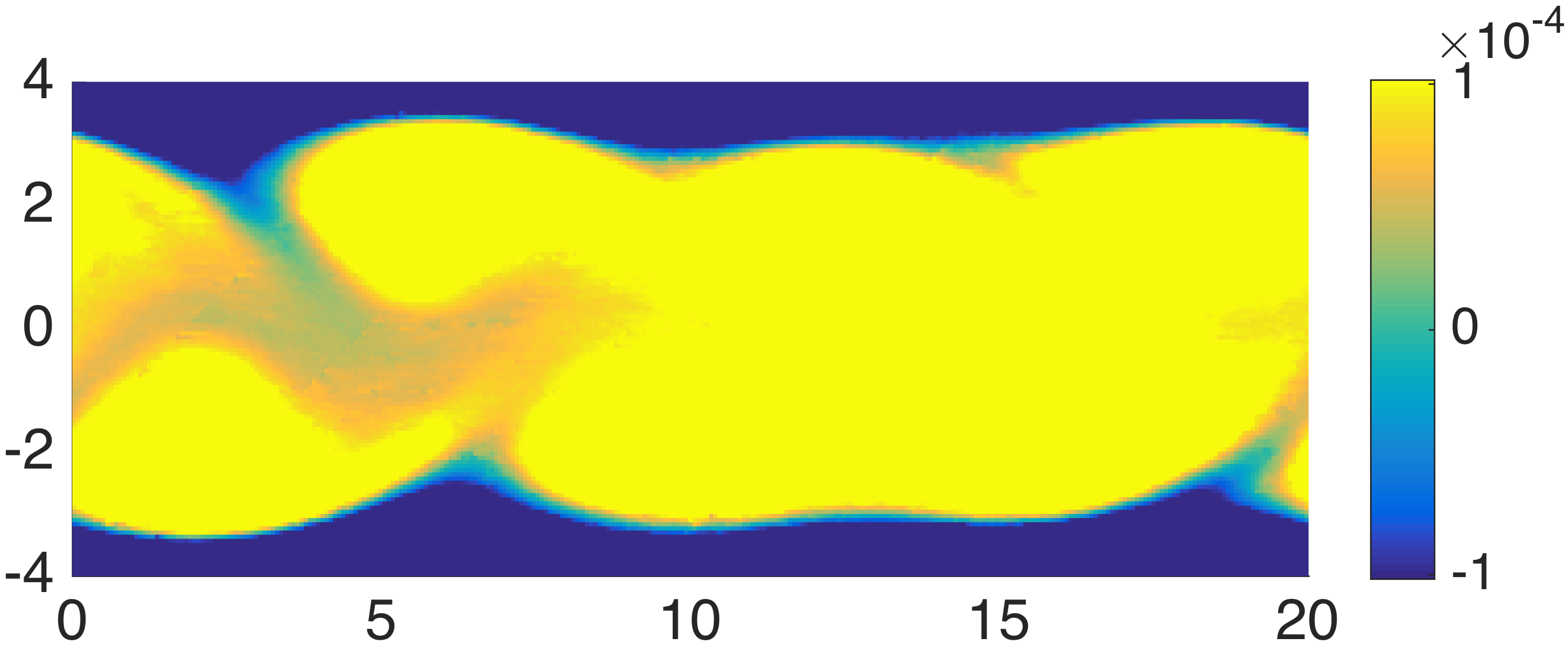} \hfill \includegraphics[width = 0.32\textwidth]{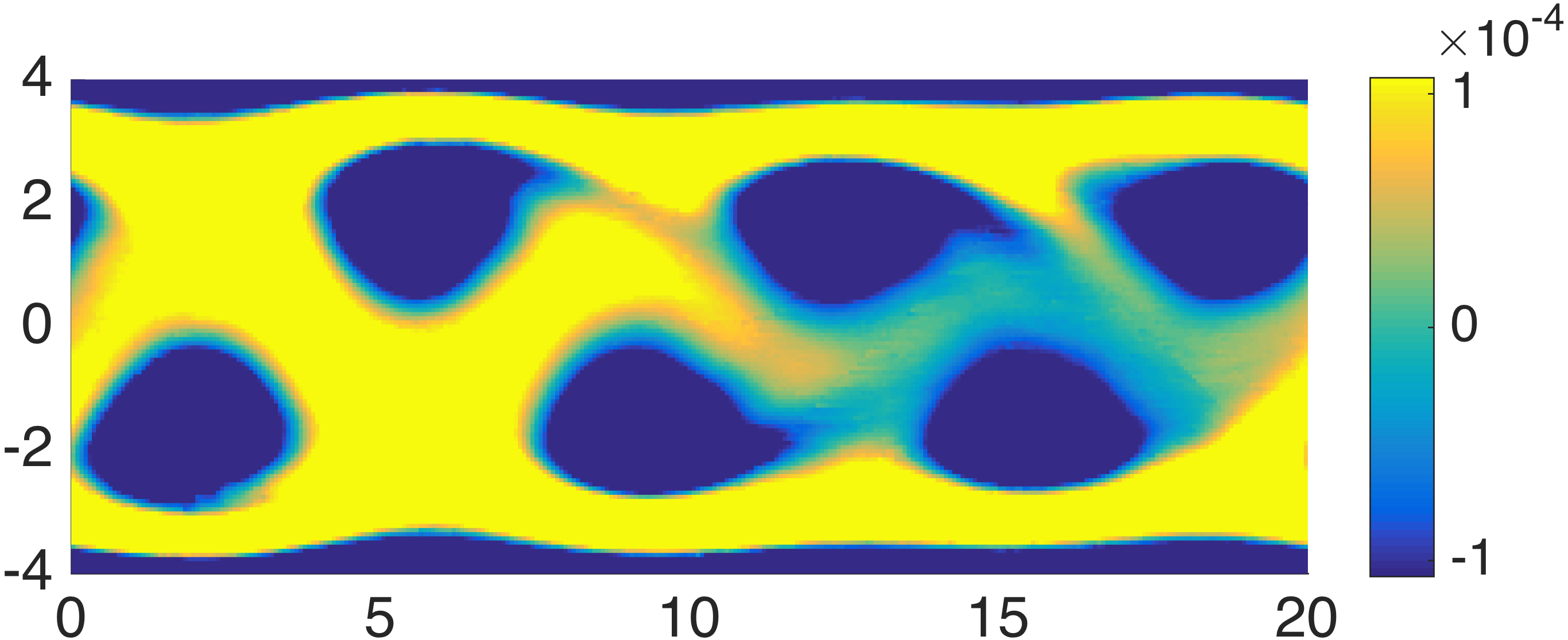}\\

\includegraphics[width = 0.32\textwidth]{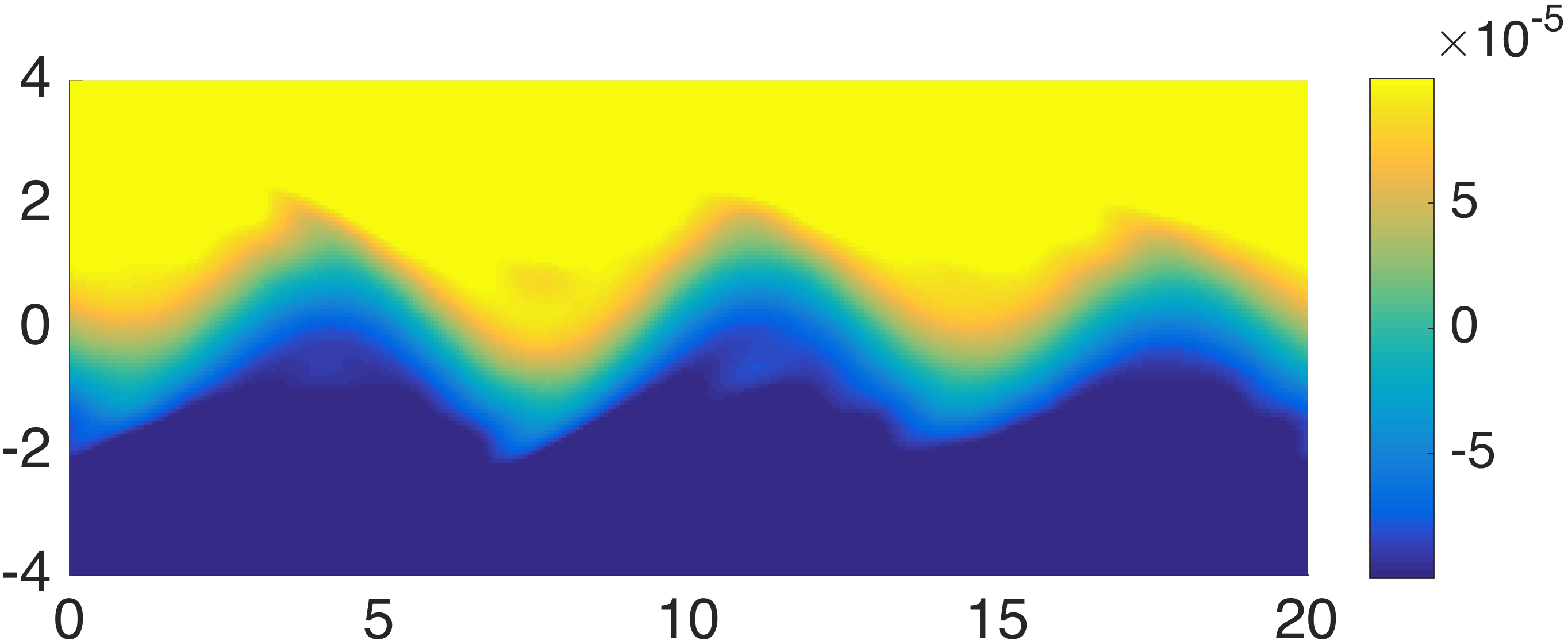} \hfill \includegraphics[width = 0.32\textwidth]{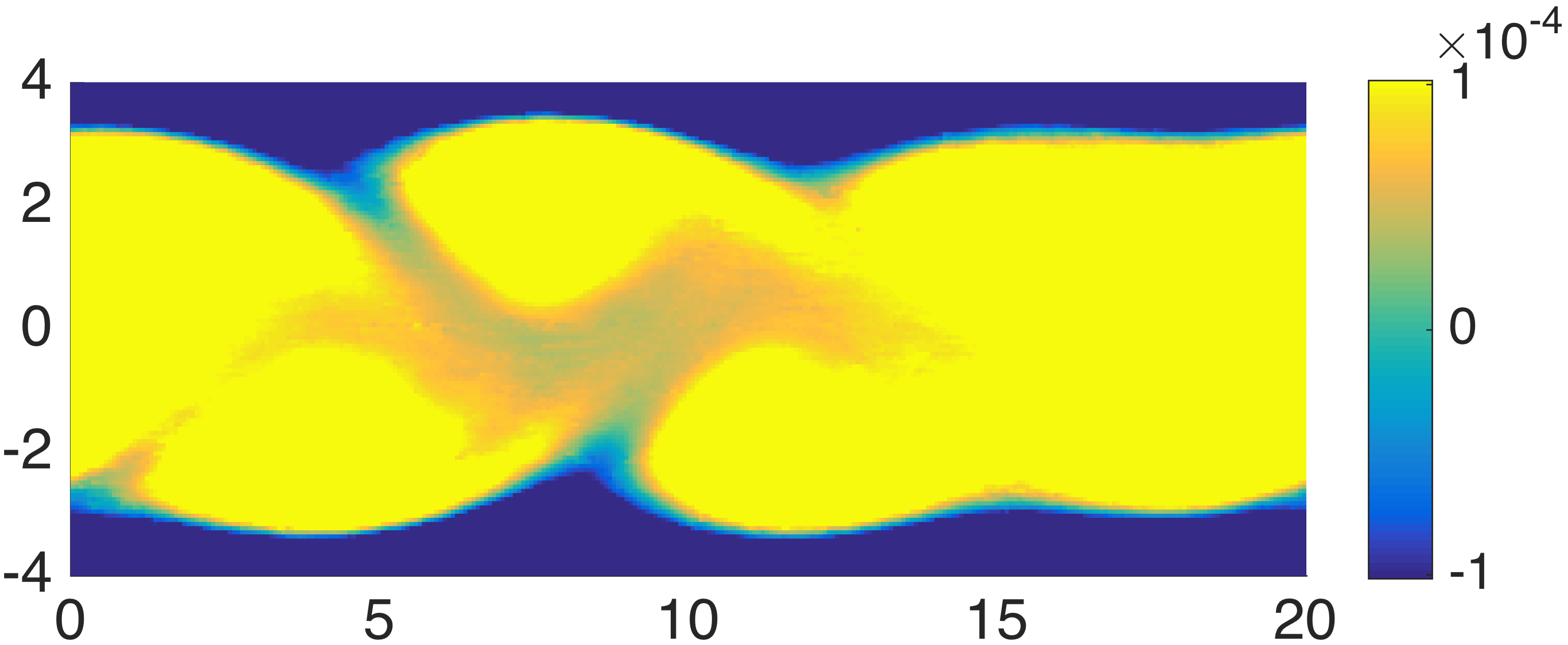} \hfill \includegraphics[width = 0.32\textwidth]{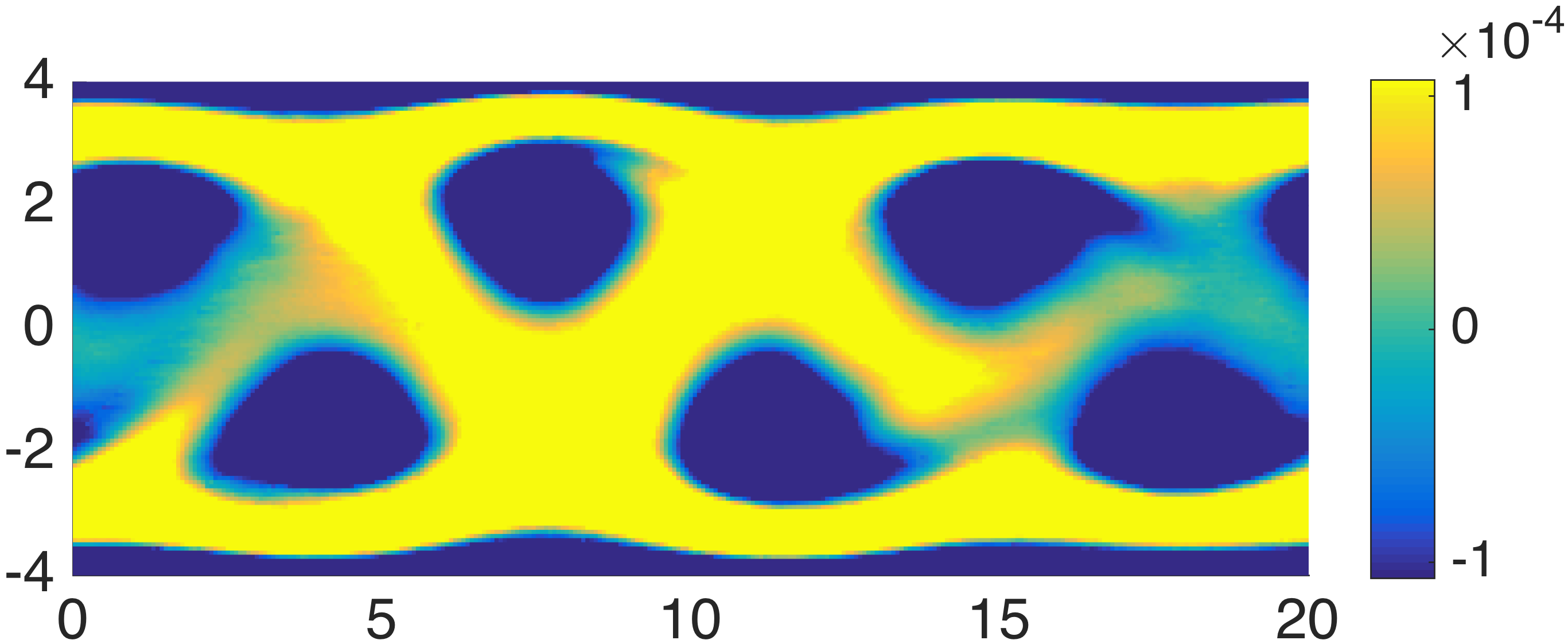}
\caption{Eigenfunctions of the hybrid discretized augmented generator at the second, third, and thirteenth eigenvalues (left to right), shown at time slices~$t=0, 1, 2$ (top to bottom).}
\label{fig:Bickley1}
\end{figure}
The eigenvalue bounds on the escape rates together with the corresponding eigenfunctions suggest that the least transport occurs across the meandering horizontal jet region around~$y=0$ ($2$nd eigenfunction, left column, Figure~\ref{fig:Bickley1}), while there is also low transport
between the extreme northern/southern boundaries, coloured dark blue, and the central horizontal region, coloured yellow ($3$rd eigenfunction, middle column, Figure~\ref{fig:Bickley1}). We also find six coherent vortices, coloured dark blue ($13$th eigenfunction, right column, Figure~\ref{fig:Bickley1}); these have a higher escape rate than the regions shown in the left and central columns of Figure~\ref{fig:Bickley1}.
Sampling-based numerical simulation of the escape rate from the most coherent family (i.e.\ that given by the eigenfunction in the left column of Figure~\ref{fig:Bickley1}; not shown) reveals escape rates of below~$0.0082$, which is again overestimated by the eigenvalue bound~$0.0138$, consistent with the theory. Figure~\ref{fig:BickleyVortex} shows the evolution of sample points starting from the top left vortex. 
\begin{figure}[htb]
\centering
\includegraphics[width = 0.45\textwidth]{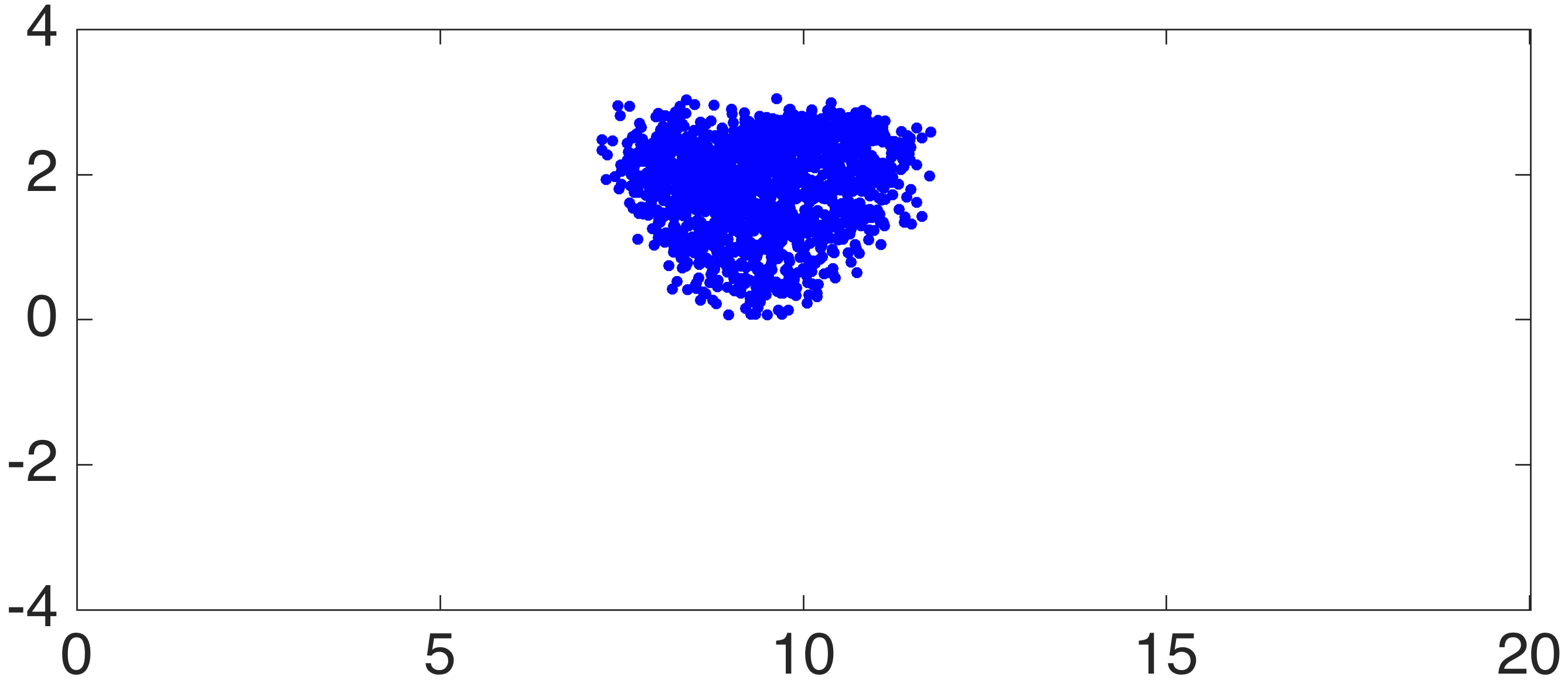} \hfill
\includegraphics[width = 0.45\textwidth]{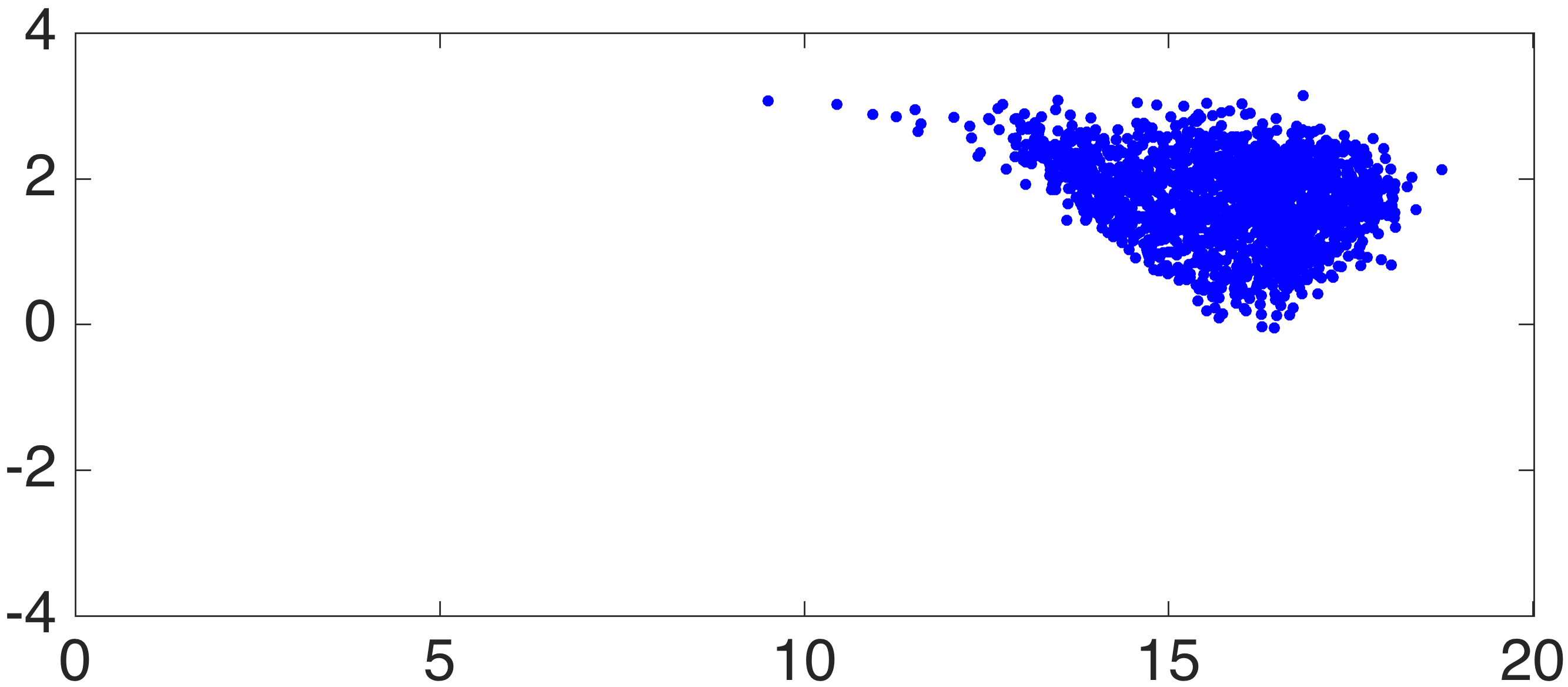} \\
\includegraphics[width = 0.45\textwidth]{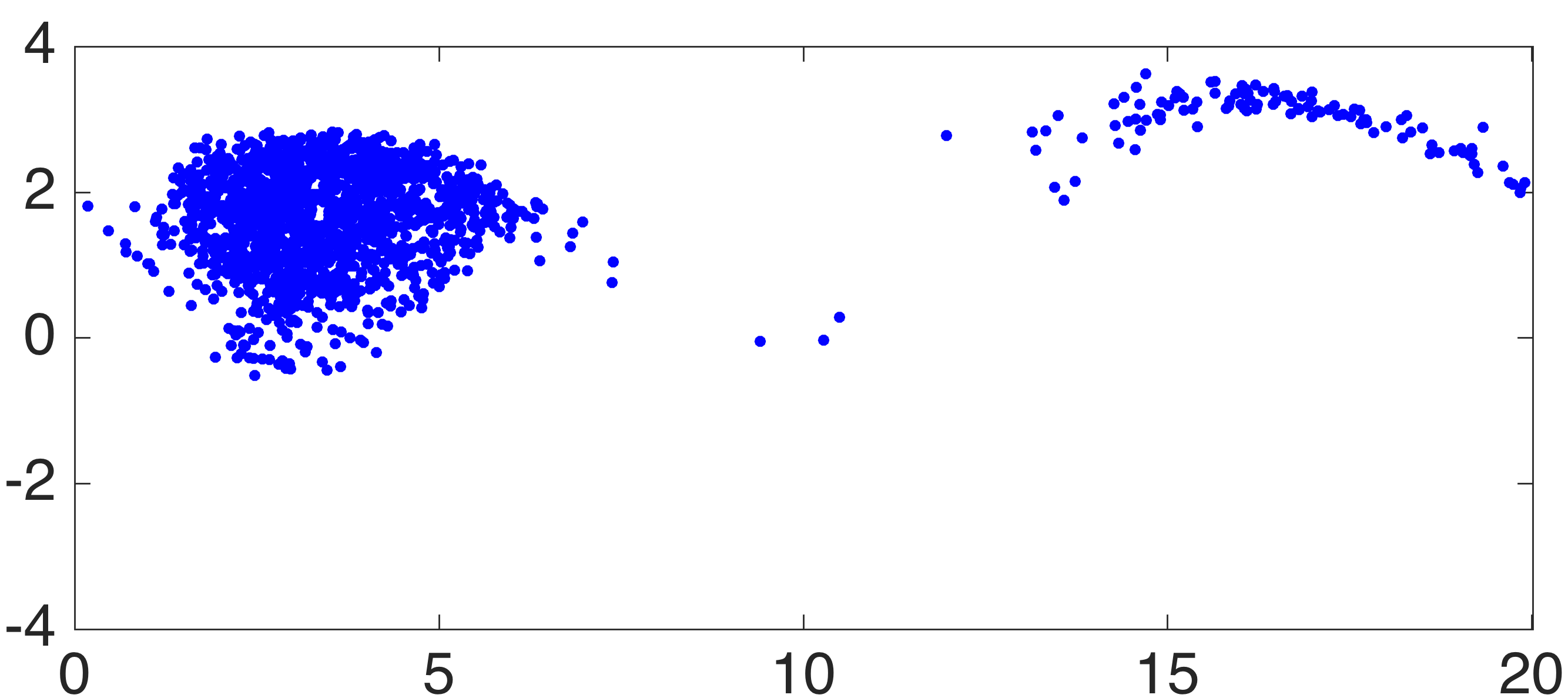} \hfill
\includegraphics[width = 0.45\textwidth]{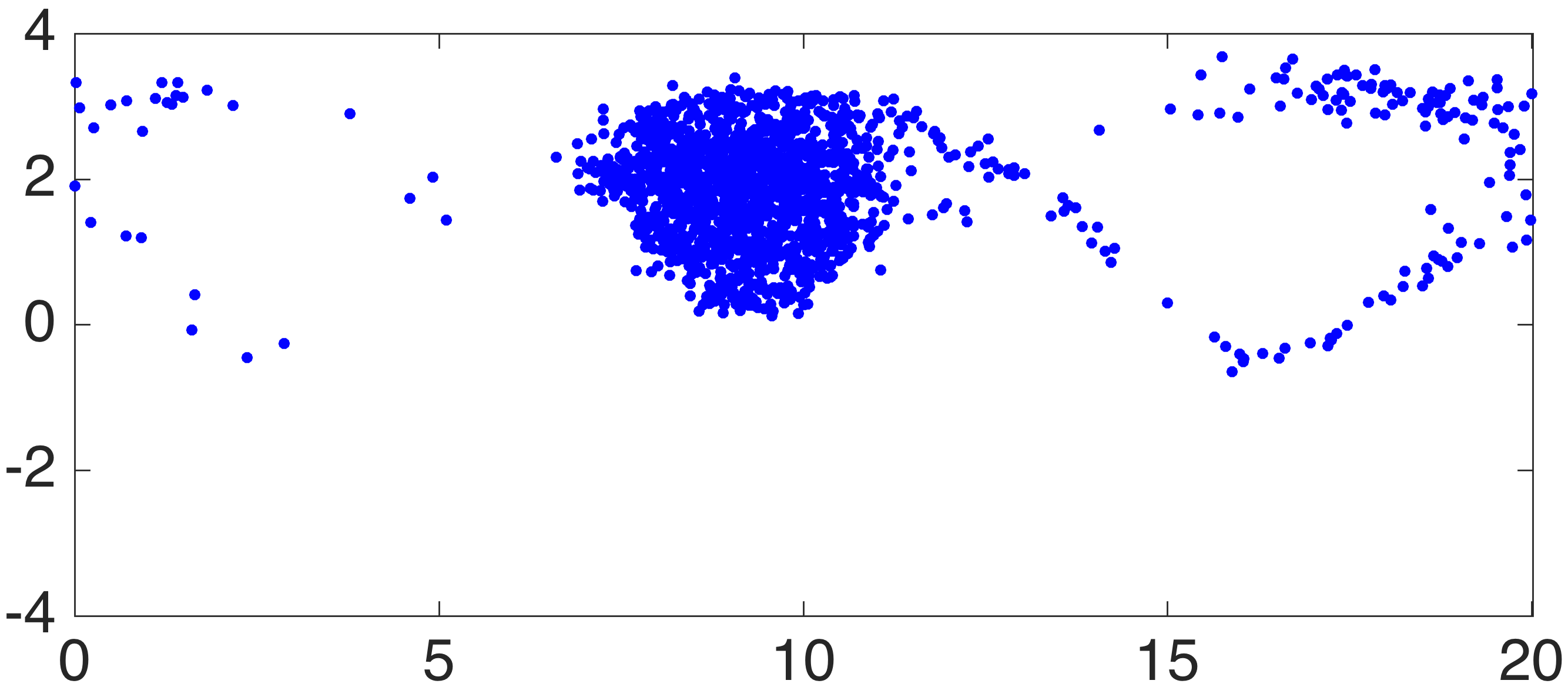}
\caption{Evolution of sample points starting from the top left vortex (cf.~the top right eigenfunction in Figure~\ref{fig:Bickley1}), shown at times~$t=3,6,9,12$ (top left, top right, bottom left, and bottom right, respectively).}
\label{fig:BickleyVortex}
\end{figure}

From the complex eigenvalues, the fourth (and fifth) and sixth (and seventh) are those which promise the most coherent sets. Since these eigenvalues are complex, we treat the corresponding eigenfunctions as described in Remark~\ref{rem:complex}. We choose the fourth eigenfunction with eigenvalue~$\mu = \alpha+\beta i = -0.0332 - 0.1131 i$; the sixth gives a similar but translated structure. We denote the corresponding eigenfunction by~$\aug{f}$, and Remark~\ref{rem:complex} tells us that
\[
A^{\pm}_t = \{\pm \P_{0,t}f_0^{\rm Re}\ge 0\} = \left\{\mathrm{Re}(e^{i\beta t}f_t) \ge 0\right\}
\]
are coherent families. The left column of Figure~\ref{fig:Bickley3} depicts the functions~$f_0^{\rm Re}$, $\P_{0,3}f_0^{\rm Re}$, and~$\P_{0,6}f_0^{\rm Re}$. This shows us that the coherent northern and southern boundaries of the state space (already shown in Figure~\ref{fig:Bickley1}, middle column) decompose into ``coherent tongues'', which move slowly from left to right.
The phase repeats every~$2\pi/0.1131\approx 55.55$ days, much longer than the driving period of~$9$ days.
Because~$55.55$ is not a multiple of~$9$, when the phase restarts, the driving will already be part way through its cycle. The spatial structures approximately return to the same~$x$-position after~$55.55$ days.

The right column of this figure shows a numerical simulation of the escape from the family~$\{A_t^+\}_{t\ge 0}$. In order not to pick points from the central horizontal region where~$\aug{f}$ is orders of magnitude smaller than its mean modulus, we initialize trajectories only where~$f_0^{\rm Re} \ge 3\cdot 10^{-4}$.

\begin{figure}[htb]
\centering
\includegraphics[width = 0.53\textwidth]{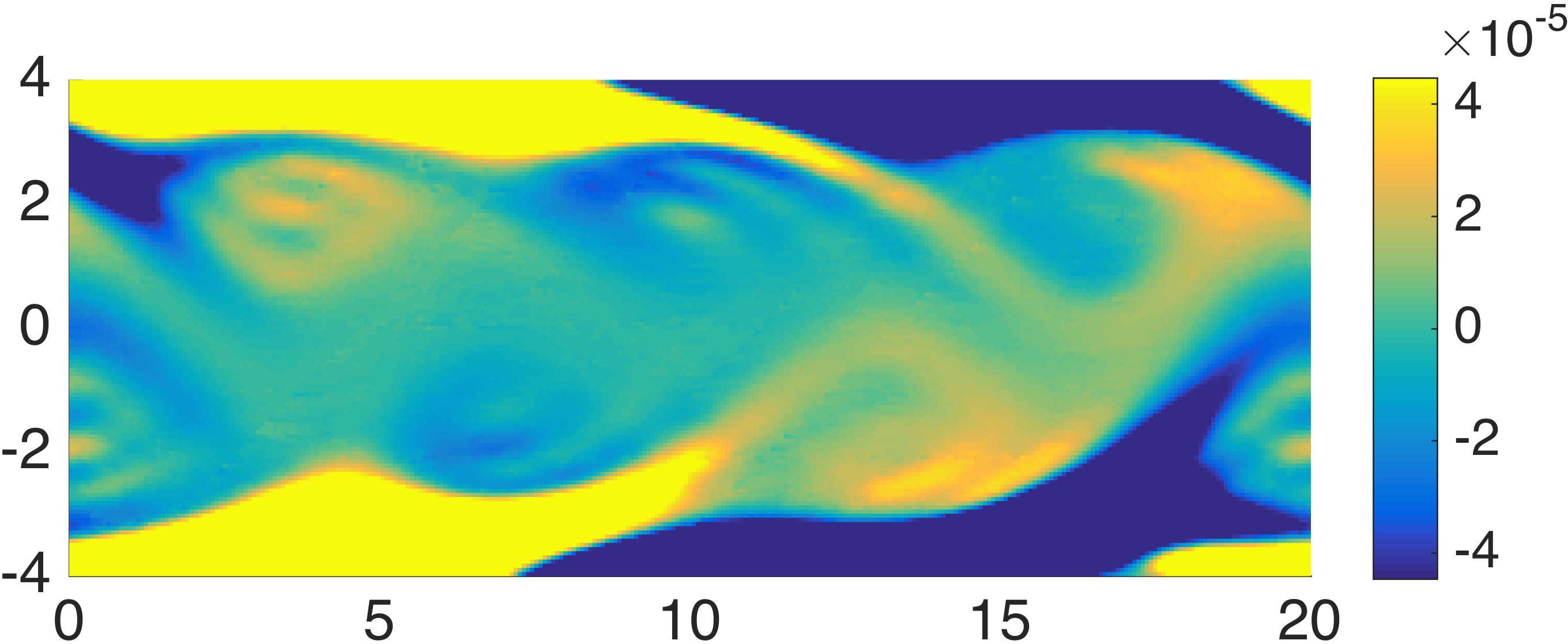} \hfill \includegraphics[width = 0.46\textwidth]{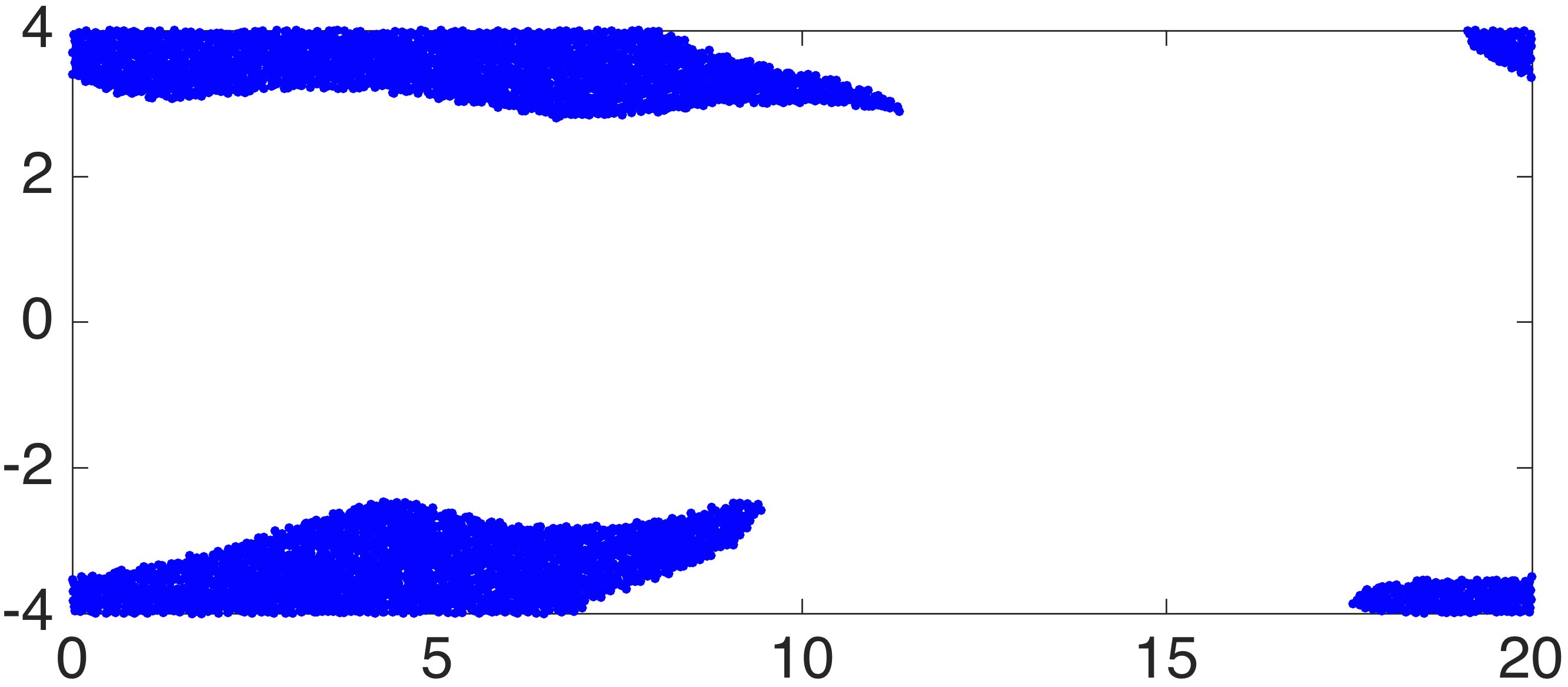} \\
\includegraphics[width = 0.53\textwidth]{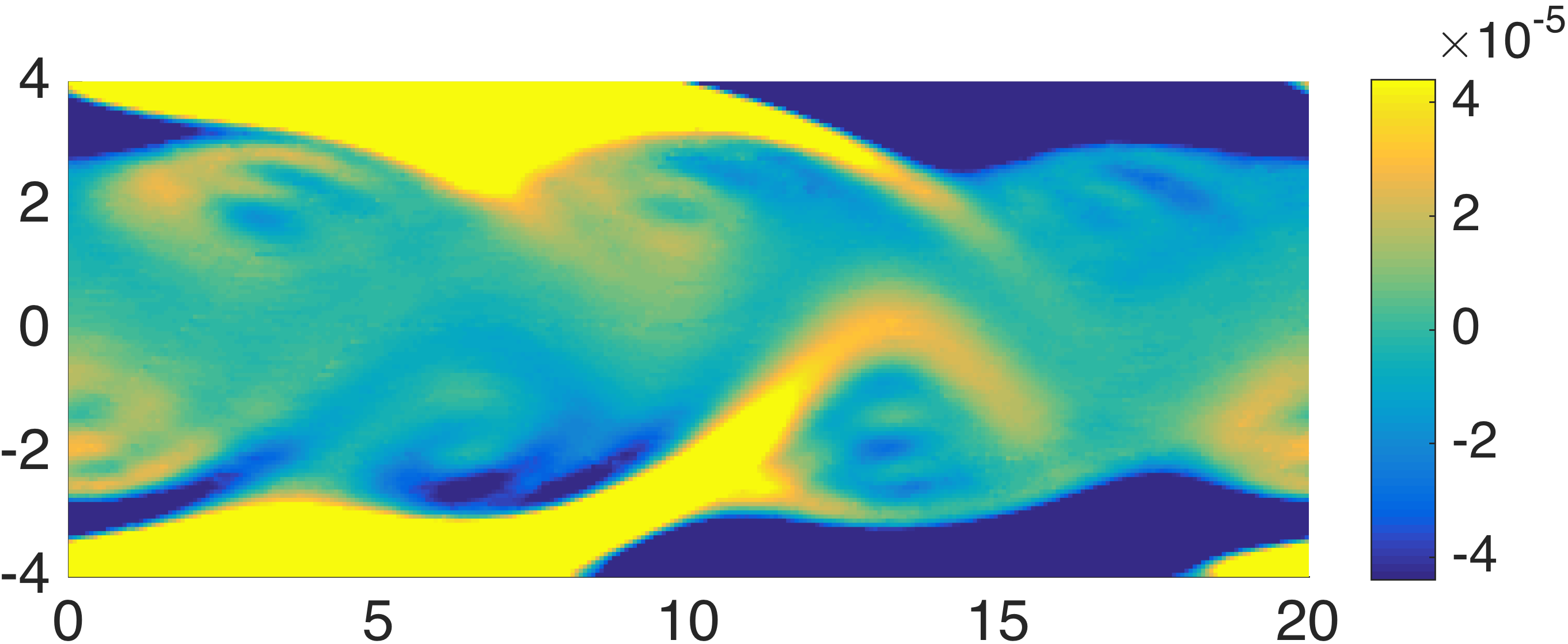} \hfill \includegraphics[width = 0.46\textwidth]{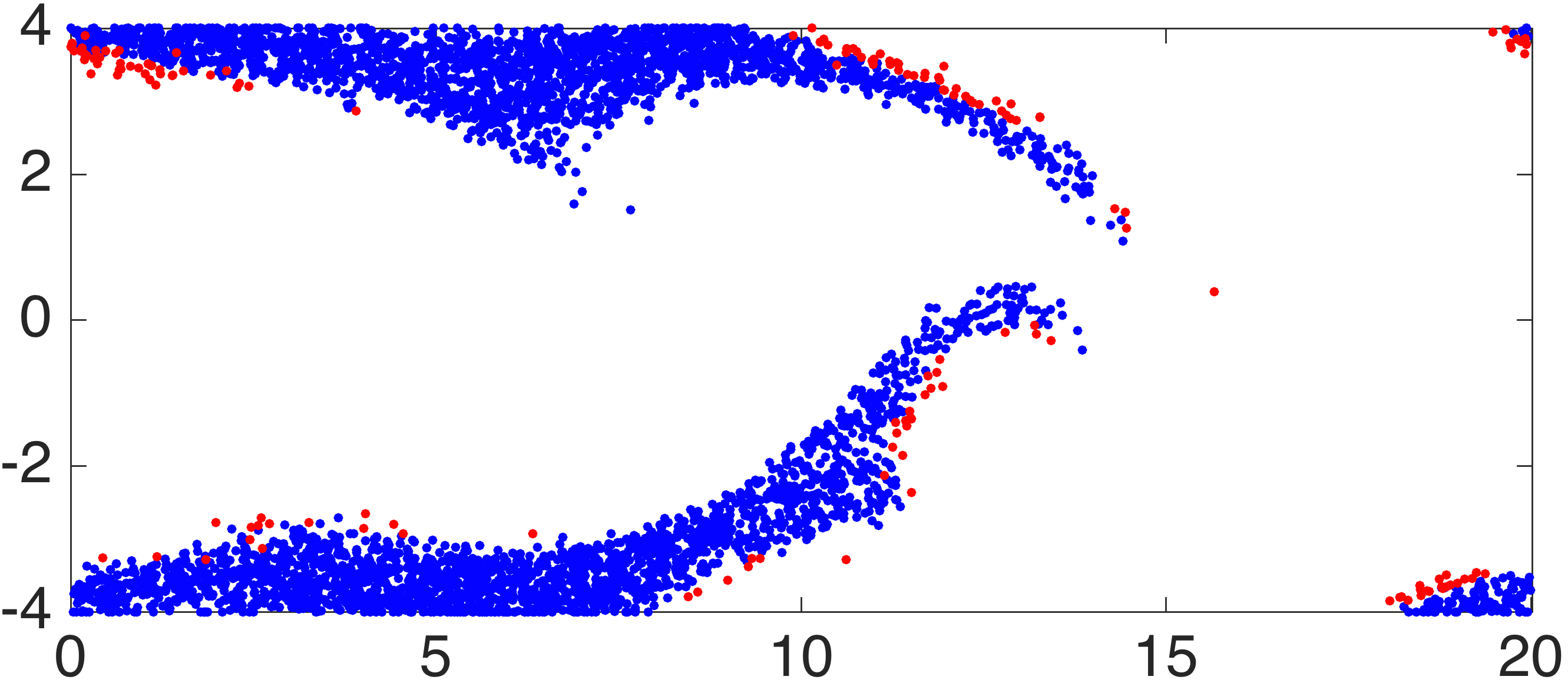} \\
\includegraphics[width = 0.53\textwidth]{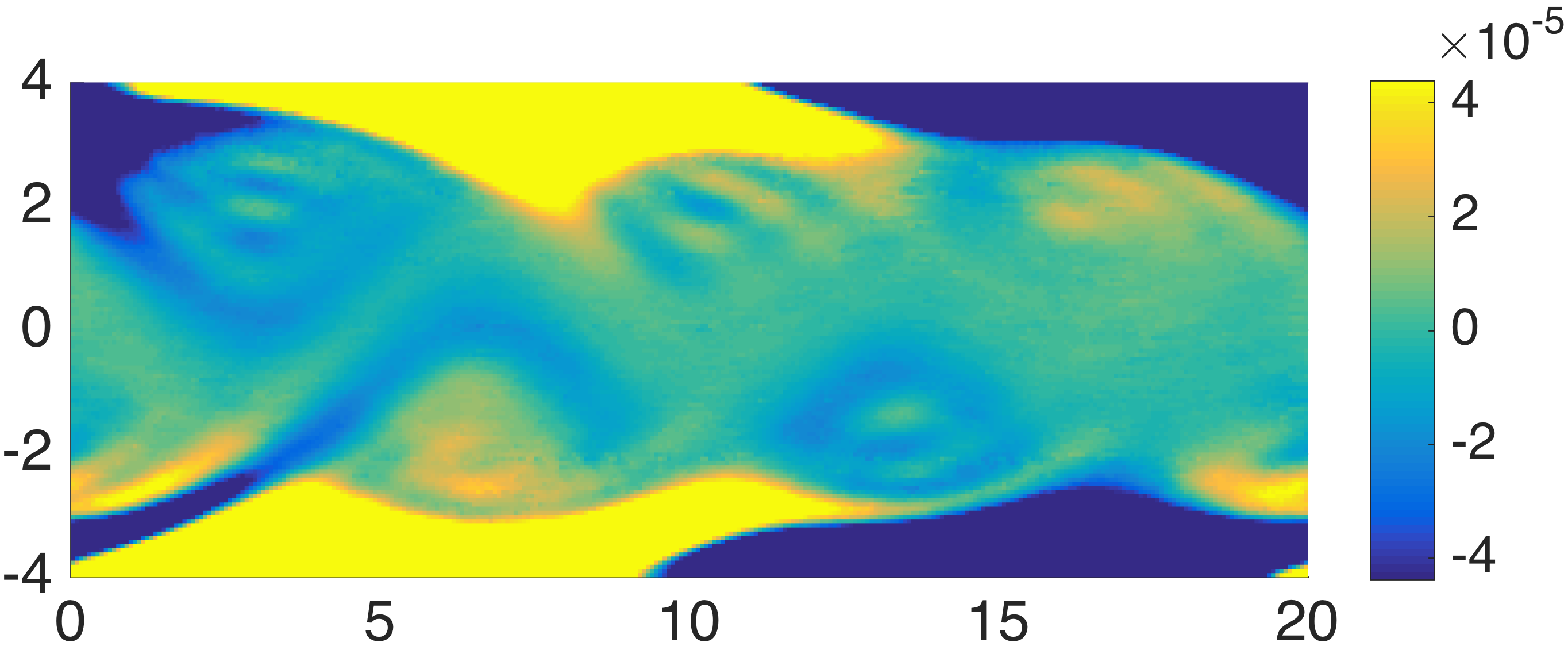} \hfill \includegraphics[width = 0.46\textwidth]{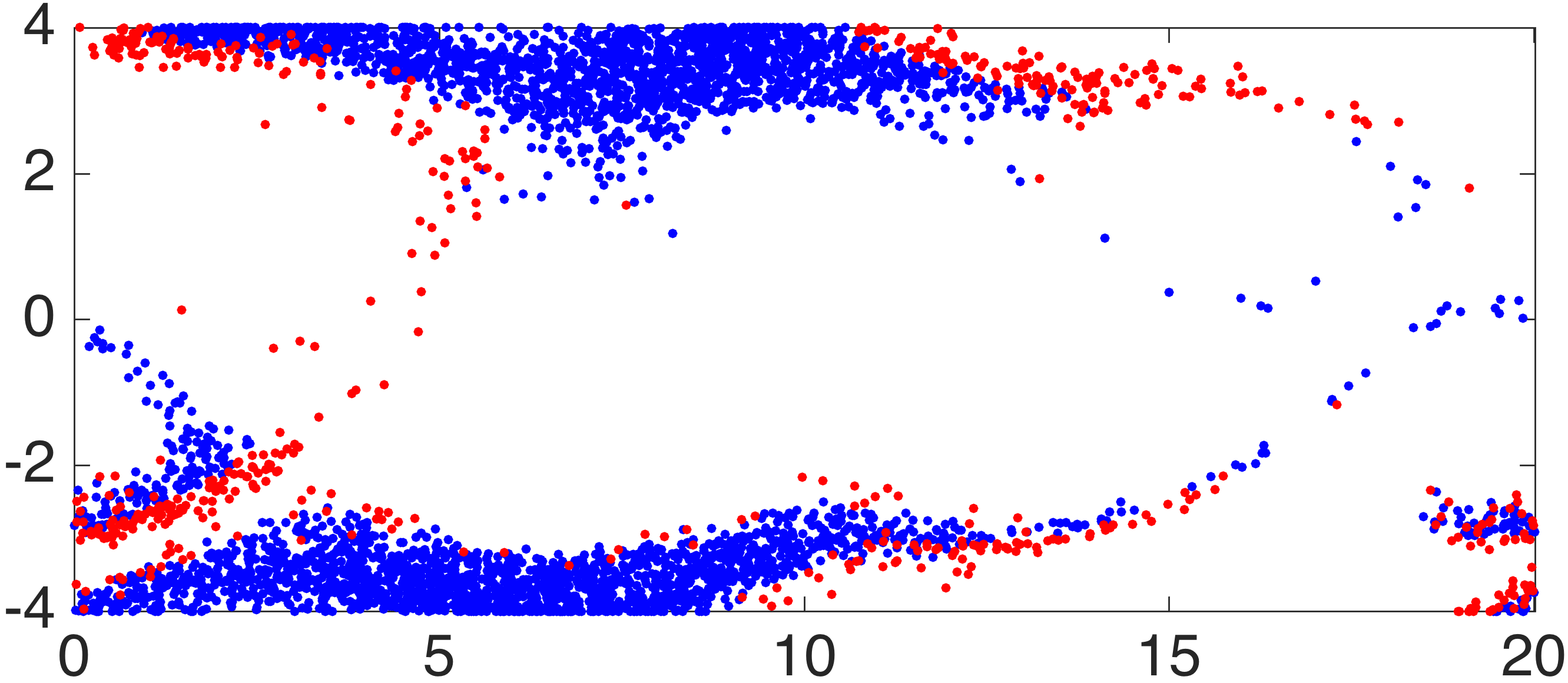}
\caption{Left: real part of the~$s=0$ time slice of the ninth eigenfunction of the hybrid discretized augmented generator evolved to times~$t=0, 3, 6$ (top to bottom). Right: escape rate simulation; trajectories shown at the corresponding times. The blue points stayed in the coherent family until the time when they are shown, the red ones have left the family prior to this time.}
\label{fig:Bickley3}
\end{figure}

\subsection{Numerical comparison with Ulam's method}
\label{ssec:Ulam4DG}

In this section we compare the results of the augmented generator method for the periodically driven double gyre, obtained in section~\ref{ssec:DG}, with the ``standard'' Ulam's method, as described in section~\ref{ssec:Galerkin}.

To this end we assemble the Ulam matrix~$P_n(s,t)$ in the usual way: via sampling-based approximation of its entries. Let us recall that Lemma~\ref{lem:spectral_con} gives us an analytic connection between the augmented generator~$\augIG$ and the one-period transfer operator~$\P_{s,s+\tau}$. Thus, we will consider the Ulam approximation of this transfer operator,~$P = P_n(0,1)$ (we omit the discretization subscript, and the times, since these will be fixed throughout this section).

In order to obtain results comparable with our previous ones, we discretize the state space~$X = [0,2]\times[0,1]$ uniformly into~$100\times 50$ boxes. The \emph{transition rates} between the boxes, i.e.\ the entries of the matrix~$P$,
\[
P_{ij} = \prob_{x_s\sim\mathrm{unif}(B_j)}(x_t\in B_i)\,,
\]
are computed via sampling. For every box~$B_j$ we choose~$N = 2500$ random test points\footnote{For comparison, in~\cite{FrPa14}, the finite-time (as opposed to infinite-time in the present study) coherent set experiments for the double gyre used a grid of $128\times 64$ boxes and~$10000$ points per box. These~$10000$ points comprised~$400$ uniformly distributed points per box, and each of these~$400$ points had an associated~$25$ points describing an~$\epsilon$-ball neighbourhood ($\epsilon=0.02$). Then, these points were integrated with a fourth-order Runge--Kutta scheme with constant stepsize~$0.01$ for a time span of length~$1$, resulting in a total number of vector field evaluation about~$8\cdot 10^9$. This procedure approximates a deterministic flow of 1 period, followed by uniform noise in an $\epsilon$-ball. In the present paper we instead simulate an SDE for 1 period.} $x_s^{(1)},x_s^{(2)}, \ldots, x_s^{(N)}$ at the initial time, and we set
\[
P_{ij} = \frac{1}{N}\sum_{k=1}^N\mathbbm{1}_{B_i}(x_t^{(k)})\,,
\]
where~$x_t^{(k)}$, $k=1,\ldots,N$, are independent realizations of the underlying stochastic process computed by the Euler--Maruyama method~\cite{KlPl92} (with reflecting boundary) for step-size~$1/30$ (the temporal direction was also discretized in~$30$ boxes in section~\ref{ssec:DG}). Note that this leads to~$375\cdot 10^6$ evaluations of the vector field, a factor~$25$ times more than in the case of the augmented generator, where we used a two-dimensional Gau{\ss} quadrature with~$4\times 4$ nodes on each of the~$6$ faces of the three-dimensional boxes. Note that since the augmented vector field is constant in the temporal direction, we could calculate the transition rates in this direction analytically, thus effectively reducing the need for quadrature to~$4$ from the~$6$ faces. Even larger savings can be obtained by applying the hybrid discretization from section~\ref{ssec:hybrid}: not only can we work with lower temporal resolutions, but also the generators~$G(t)$ in~\eqref{eq:hybrid generator} are computed on the lower-dimensional space~$X$ instead of~$\aug{X}$.

We expect the dominant eigenvalues and eigenfunctions of~$P$ to be associated with the eigenvalues and eigenfunctions of the augmented generator~$\aug{G}$, predicted by Lemma~\ref{lem:spectral_con} in the analytical case. The eigenvalues with the largest modulus are shown in Table~\ref{tab:UlamEV}, together with their logarithmic transforms, and the corresponding eigenvalues of the augmented generator~$\aug{G}$.
\begin{table}[h]
\begin{tabular}{|c|c|c|}
\hline
$\lambda_i$ & $\tau^{-1}\log(\lambda_i)$ & $\mu_i$\\ \hline
\texttt{1.0000 + 0.0000i} &  \texttt{ 0.0000 + 0.0000i} & \texttt{-0.0000 + 0.0000i} \\
\texttt{0.9150 + 0.0000i} & \texttt{-0.0888 + 0.0000i} & \texttt{-0.0832 + 0.0000i} \\
\texttt{0.3993 + 0.6678i} & \texttt{-0.2509 + 1.0319i} & \texttt{-0.3160 + 1.1437i} \\
\texttt{0.3993 - 0.6678i} & \texttt{-0.2509 - 1.0319i} & \texttt{-0.3160 - 1.1437i} \\
\texttt{0.7616 + 0.0000i} & \texttt{-0.2723 + 0.0000i} & \texttt{-0.3663 + 0.0000i} \\
\texttt{0.6929 + 0.0000i} & \texttt{-0.3669 + 0.0000i} & \texttt{-0.5169 + 0.0000i}\\ \hline
\end{tabular}
\caption{Left column: dominant eigenvalues~$\lambda_i$ of the Ulam matrix~$P$. Middle column: logarithms of the~$\lambda_i$. Right column: corresponding eigenvalues~$\mu_i$ of the Ulam discretization~$\aug{G}$ of the augmented generator.}
\label{tab:UlamEV}
\end{table}
The upwind type spatial discretization introduces so-called numerical diffusion in~$\aug{G}^{\rm drift}$ (as discussed in~\cite{FrJuKo13}), adding to the mixing in the system.\footnote{We conjecture that the second eigenvalue of~$\aug{G}$, $-0.0832$, is not smaller than the log-transform of the corresponding eigenvalue of~$P$, because (i) the zero-level curve of the corresponding eigenfunction has a significant vertical component, and (ii) the vector field is approximately vertical near the zero-level curve. This likely leads to a reduction in numerical diffusion in~$\aug{G}^{\rm drift}$ across the zero-level curve, when compared to lower eigenvalues, since numerical diffusion occurs locally, where the vector field has a significant component transversal to a box face.
}
This is the reason for the augmented generator having eigenvalues (from the third on) with smaller real parts than the corresponding log-transformed eigenvalues of the Ulam matrix.
Figure~\ref{fig:DG_Ulam1} shows the second, third, and fifth eigenfunctions of~$P$.
\begin{figure}[htb]
\centering
\includegraphics[width = 0.32\textwidth]{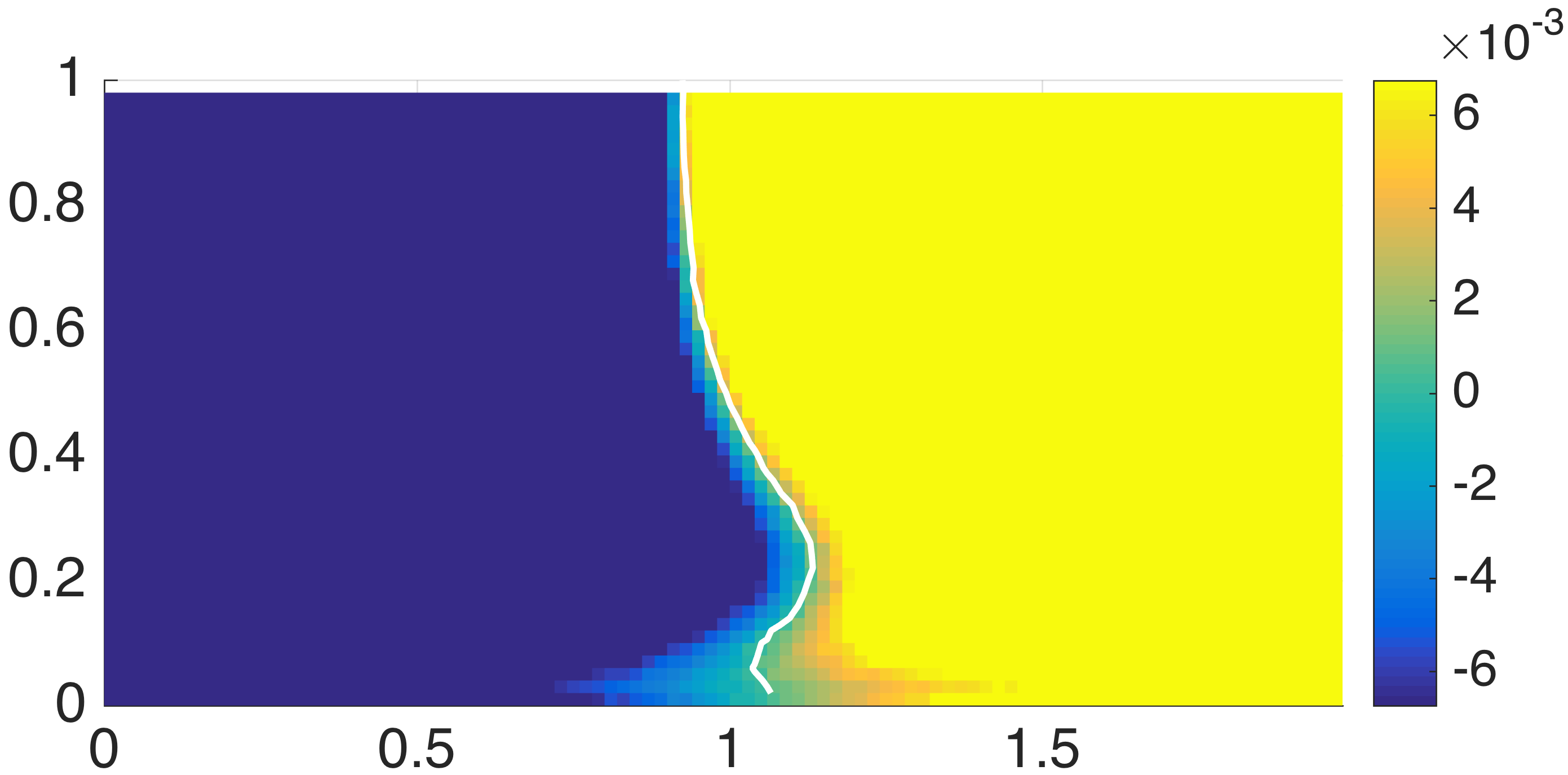}
\hfill
\includegraphics[width = 0.32\textwidth]{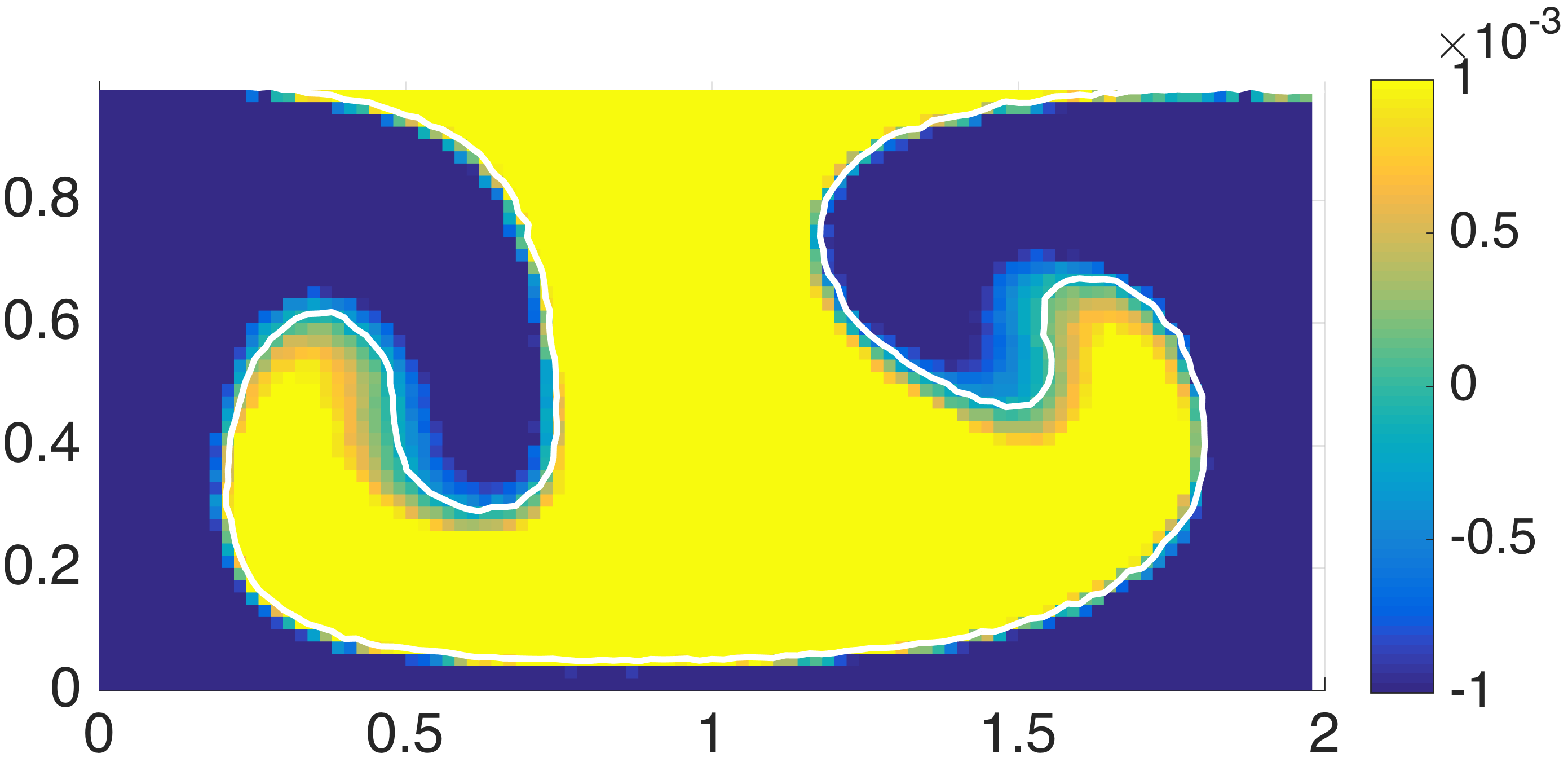}
\hfill
\includegraphics[width = 0.32\textwidth]{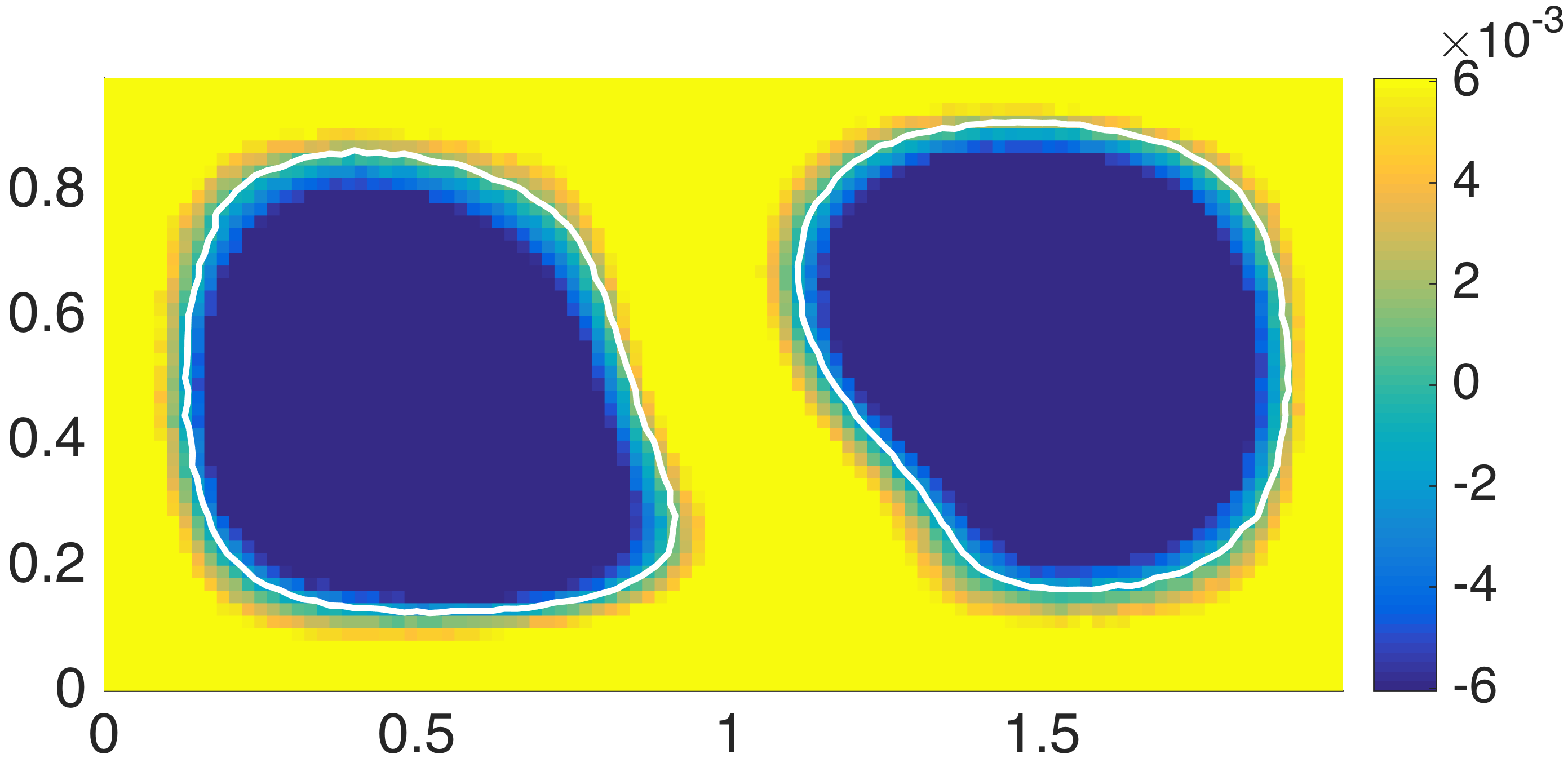}
\caption{From left to right: second, third (shown is the real part), and fifth eigenfunctions of the Ulam discretization~$P$ of the transfer operator. The white contours indicate the zero-level curves.}
\label{fig:DG_Ulam1}
\end{figure}
All the eigenfunctions are, as expected, very close to those of the augmented generator~$\aug{G}$; cf.~Figures~\ref{fig:DoubleGyre} and~\ref{fig:DoubleGyreComplex}. The third, complex eigenfunction differs only in the phase, which is arbitrary. If we repeat the computation now only with~$100$ sample points per box, such that the overall number of vector field evaluations matches that for the augmented generator, we obtain the eigenfunctions given in Figure~\ref{fig:DG_Ulam2}.
\begin{figure}[htb]
\centering
\includegraphics[width = 0.32\textwidth]{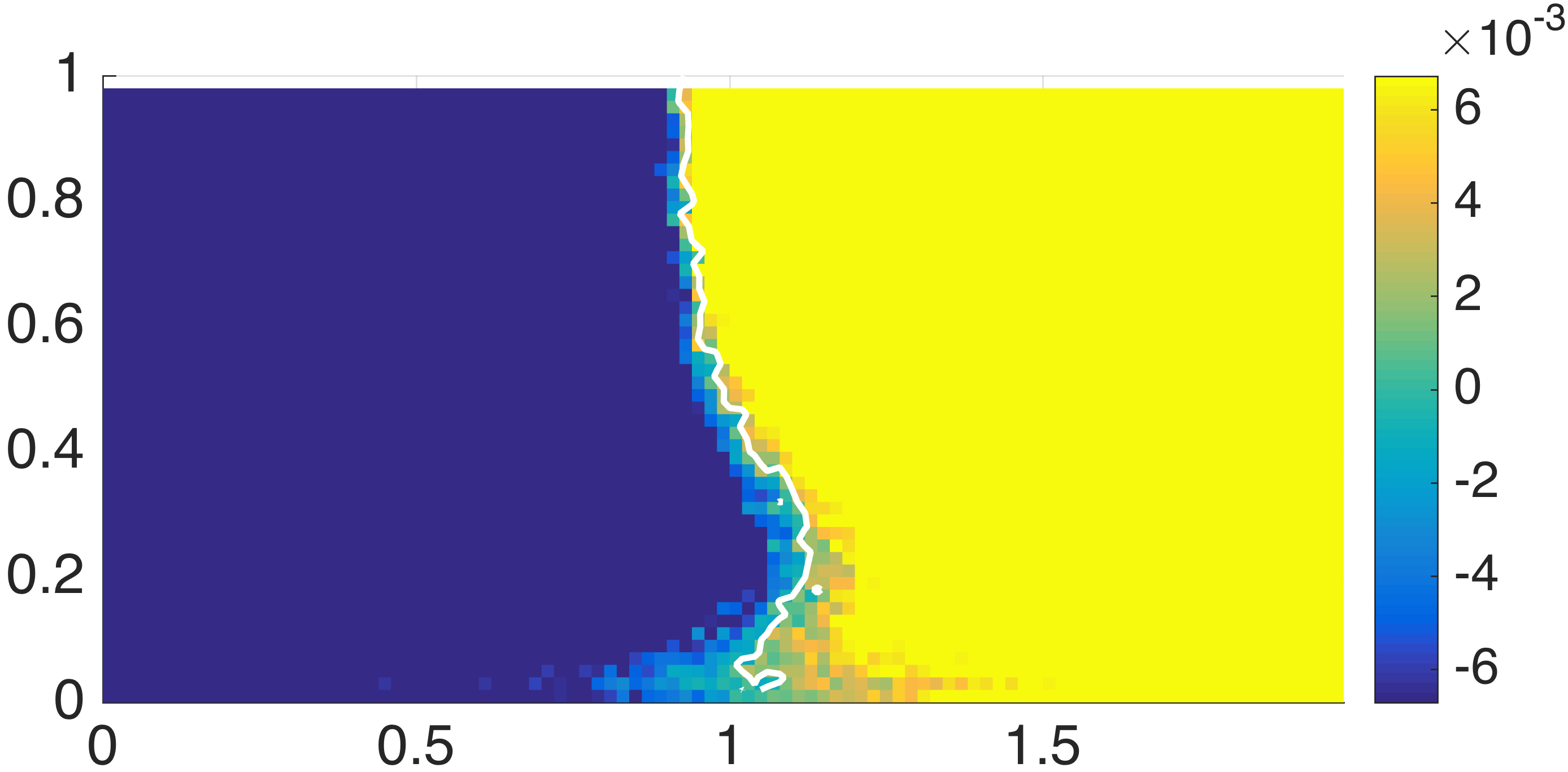}
\hfill
\includegraphics[width = 0.32\textwidth]{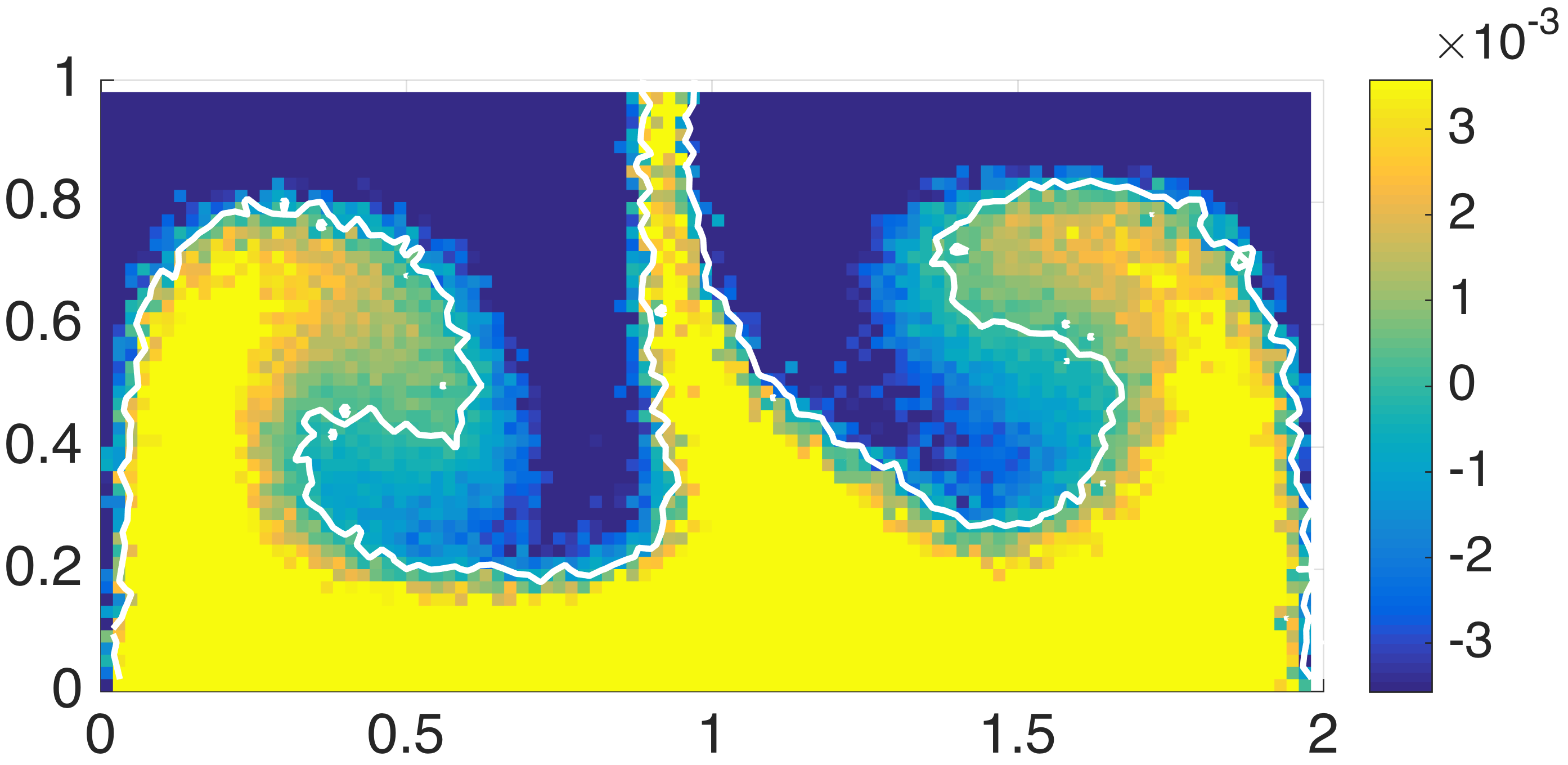}
\hfill
\includegraphics[width = 0.32\textwidth]{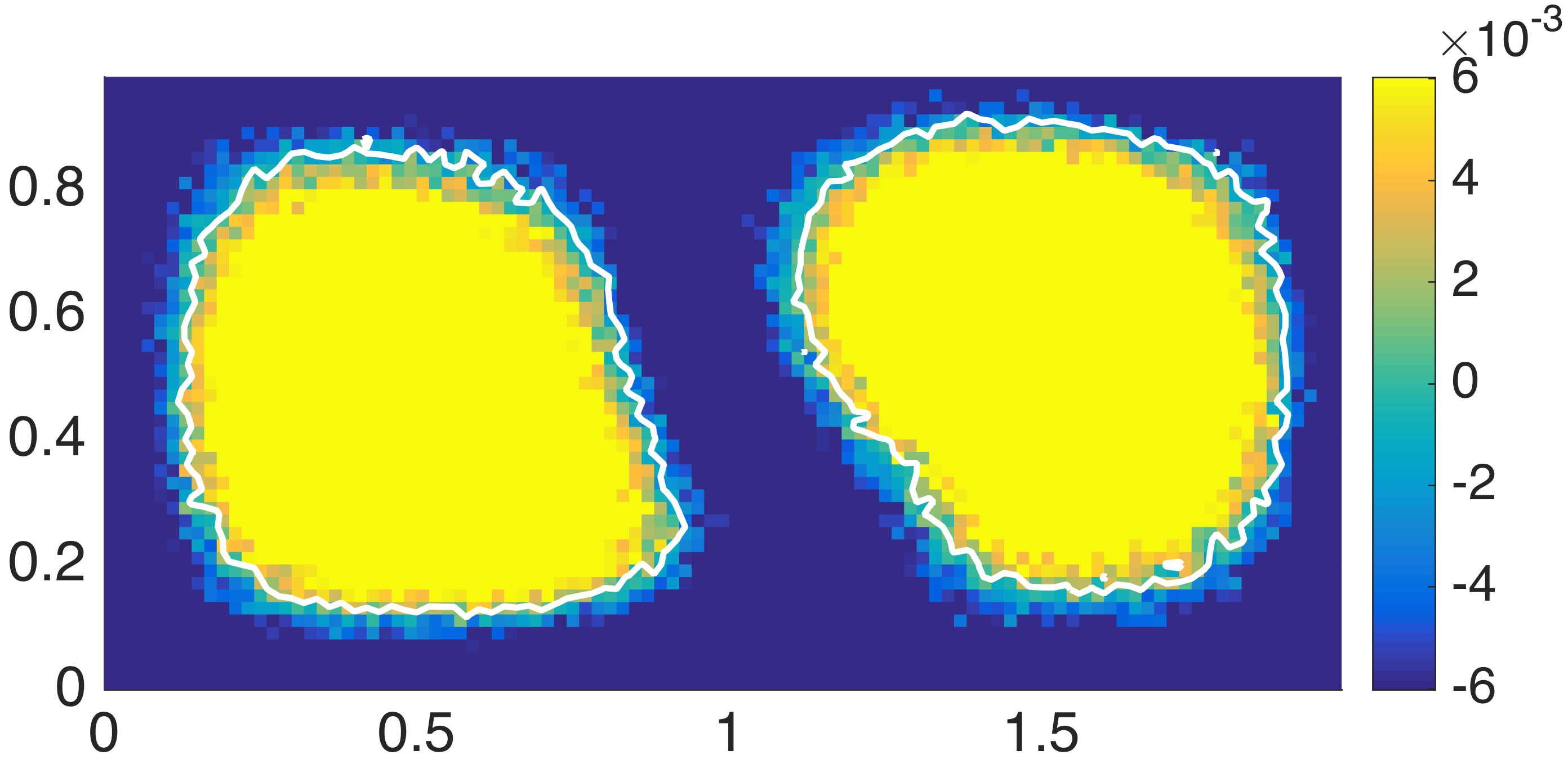}
\caption{From left to right: second, third (shown is the real part), and fifth eigenfunctions of the Ulam discretization~$P$ of the transfer operator, now computed with a 25 times smaller number of sample points as those in Figure~\ref{fig:DG_Ulam1}. The white contours indicate the zero-level curves.}
\label{fig:DG_Ulam2}
\end{figure}
Note that the smoothness of the numerical eigenfunctions has deteriorated due to an insufficient sampling; the zero-level curves have also lost smoothness.
In general, the longer the time span~$(s,t)$, the more sampling points have to be integrated in order to resolve the noise sufficiently, due to exponential expansion in the flow. Certainly, a larger time span necessitates a larger number of basis functions in the temporal direction for the augmented generator too. It is not yet clear whether this number also scales exponentially in time, or a linear growth is sufficient to maintain the same level of accuracy. The augmented generator incorporates diffusion as an additive differential operator, and guarantees for smoothness of the results.

\section*{Acknowledgments}

GF is supported by an ARC Future Fellowship. PK thanks the UNSW School of Mathematics and Statistics for hospitality at an early stage of this work, and an ARC Discovery Project for supporting this visit. 
The authors thank Michael~Kratzer for helpful comments regarding the proof of Lemma~\ref{lem:auxlemma1}, Roland Schnaubelt for pointing out useful references, and Martin Plonka for his careful proofreading.

\begin{appendix}

\section{Proofs}
\subsection{Proof of Theorem \ref{thm:1=2}} \label{app:proof:thm:1=2}
\begin{proof}[Proof of Theorem~\ref{thm:1=2}]
Let~$D_t a$ and~$D_r a$ denote the derivatives of the parametrizing function~$a$ with respect to~$t\in \tau S^1$ and~$r\in R$, respectively. Then, we have for the accumulated outflow flux:
\begin{multline} \label{eq:cumoutflow2}
\int_{\tau S^1}\int_{\partial A_t}\left\langle v(t,x)-w(t,x),n_t(x)\right\rangle^+ d\sigma(x)dt = \hspace*{15em} \\
 = \int_{\tau S^1}\int_R\langle v\left(t,a\left(t,r\right)\right)-D_t a\left(t,r\right),n_t\left(a\left(t,r\right)\right)\rangle^+ g_t(r)\, dr dt,
\end{multline}
where $g_t(r) = \det\left(D_ra(t,r)^{\rm T}D_ra(t,r)\right)^{1/2}$ is the Gram determinant.
The outer normal in the augmented space has the form
\[
\aug{n}(t,r):=\aug{n}(\aug{x}) = \begin{pmatrix}\alpha \\ \beta n_t(a(t,r))\end{pmatrix},
\]
with $\alpha,\beta\in\R$ satisfying $\alpha^2+\beta^2=1$ and
\[
\aug{n}(t,r)\perp \begin{pmatrix}1 \\ D_t a(t,r)\end{pmatrix}.
\]
Solving for $\aug{n}$ yields
\[
\aug{n}(t,r) = \left(1+\langle n_t\left(a\left( t,r\right)\right),D_t a\left(t,r\right)\rangle^2\right)^{-1/2}\begin{pmatrix}-\langle n_t(a(t,r)),D_t a(t,r)\rangle \\ n_t(a(t,r))\end{pmatrix}.
\]
The instantaneous outflow flux in augmented space, $\int_{\partial \aug{A}}\langle \aug{v}(\aug{x}),\aug{n}(\aug{x})\rangle^+ d\aug{\sigma}(\aug{x})$, now reads as
\begin{multline}	\label{eq:instoutflow2}
\int_{\tau S^1\times R} \left < \begin{pmatrix} 1\\ v(t,a(t,r)) \end{pmatrix},\begin{pmatrix} -\langle n_t(a(t,r)),D_t a(t,r)\rangle \\ n_t(a(t,r)) \end{pmatrix} \right>^+ \times\\
\times \frac{ \aug{g}(t,r)}{(1+\langle n_t(a(t,r)),D_t a(t,r)\rangle^2)^{1/2}}\, d(t,r),
\end{multline}
with~$\aug{g}(t,r) = \det\left(D_{(t,r)}a(t,r)^{\rm T}D_{(t,r)}a(t,r)\right)^{1/2}$, where~$D_{(t,r)}a$ denotes the total derivative of~$a$ with respect to the joint variable~$(t,r)$; i.e.~$D_{(t,r)}a = (D_ta,D_ra)$.
Comparing~\eqref{eq:cumoutflow2} with \eqref{eq:instoutflow2} shows that they are equal if and only if
\begin{equation}	\label{eq:outfloweq}
(1+\langle n_t(a(t,r)),D_t a(t,r)\rangle^2)^{1/2} g_t(r) = \aug{g}(t,r)\quad\text{for every }t\in S^1,\, r\in R.
\end{equation}
Using Lemma~\ref{lem:auxlemma1} below with~$M=D_ra(t,r)$, $m=D_ta(t,r)$ and~$n=n_t(a(t,r))$ proves~\eqref{eq:outfloweq} and completes the proof of~\eqref{eq:1=2}.

Now that we have shown equality in~\eqref{eq:1=2}, the independence of its left- and right-hand sides on the parametrization~$a$ is a simple corollary of the independence of surface integrals on parametrization. In particular, the right-hand side of~\eqref{eq:1=2},~$\int_{\partial \aug{A}}\langle \aug{v}(\aug{x}),\aug{n}(\aug{x})\rangle^+ d\aug{\sigma}(\aug{x})$, is a surface integral, and hence depends only on~$\aug{v}$ and~$\aug{A}$, and both these objects are parametrization-independent.
\end{proof}

\begin{lemma} \label{lem:auxlemma1}
Let~$M\in\R^{k\times k-1}$ have full rank, let~$n\in\R^k$ with~$n^{\rm T}M = 0$ and~$n^{\rm T}n=1$, and let~$m\in\R^k$. Then we have
\[
(1+(n^{\rm T}m)^2)\det(M^{\rm T}M) = \det\begin{pmatrix} 1+m^{\rm T}m & m^{\rm T}M \\ M^{\rm T}m & M^{\rm T}M \end{pmatrix}.
\]
\end{lemma}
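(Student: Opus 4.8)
The plan is to recognize the right-hand side as a Gram determinant of an augmented matrix, and then collapse it onto the left-hand side by a single determinant-preserving column operation, after decomposing $m$ along $\mathrm{col}(M)$ and its one-dimensional orthogonal complement $\mathrm{span}(n)$.

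The first step is the factorization
\[
\begin{pmatrix} 1+m^{\rm T}m & m^{\rm T}M \\ M^{\rm T}m & M^{\rm T}M\end{pmatrix} = D^{\rm T}D, \qquad D := \begin{pmatrix} 1 & 0 \\ m & M\end{pmatrix}\in\R^{(k+1)\times k},
\]
so the right-hand side equals $\det(D^{\rm T}D)$, the squared $k$-dimensional volume of the parallelepiped spanned by the columns of $D$ (equivalently, by Cauchy--Binet, the sum of squares of the $k\times k$ minors of $D$). This already makes the identity geometrically transparent: the last $k-1$ columns of $D$ span a copy of $\mathrm{col}(M)$ inside $\{0\}\times\R^k$, of $(k-1)$-volume $\sqrt{\det(M^{\rm T}M)}$, while the column $(1,m)^{\rm T}$ supplies the height, i.e.\ its distance to that subspace, which is $\sqrt{1+\mathrm{dist}(m,\mathrm{col}(M))^2}=\sqrt{1+(n^{\rm T}m)^2}$, since $M$ has full column rank and $n$ is a unit normal to $\mathrm{col}(M)$ in $\R^k$.

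To turn this into a short algebraic proof, I would use the hypotheses: $\mathrm{rank}(M)=k-1$ and $n^{\rm T}M=0$, $n\neq 0$ force $\mathrm{span}(n)=\mathrm{col}(M)^\perp$, hence $m = Ma + (n^{\rm T}m)\,n$ with $a := (M^{\rm T}M)^{-1}M^{\rm T}m$. Substituting this into the block matrix and using $M^{\rm T}n=0$ and $n^{\rm T}n=1$ rewrites it as
\[
\begin{pmatrix} 1+a^{\rm T}Ka+(n^{\rm T}m)^2 & a^{\rm T}K \\ Ka & K\end{pmatrix}, \qquad K:=M^{\rm T}M.
\]
Now apply the column operation ``first column $\mapsto$ first column $-\sum_i a_i\cdot(\text{later column }i)$'', which preserves the determinant, clears the bottom-left block ($Ka-Ka=0$), and cancels $a^{\rm T}Ka$ in the top-left entry, leaving $\begin{pmatrix} 1+(n^{\rm T}m)^2 & a^{\rm T}K \\ 0 & K\end{pmatrix}$; its determinant is $(1+(n^{\rm T}m)^2)\det K=(1+(n^{\rm T}m)^2)\det(M^{\rm T}M)$, which is the claim.

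The only point needing a little care — the main obstacle, minor though it is — is justifying the decomposition $m = Ma+(n^{\rm T}m)n$ and that the coefficient of $n$ equals exactly $n^{\rm T}m$. This rests on $M$ having full column rank, so that $\mathrm{col}(M)$ is precisely $(k-1)$-dimensional and $\mathrm{col}(M)^\perp=\mathrm{span}(n)$, together with $\|n\|=1$. Everything after that is routine bookkeeping with the $2\times2$ block structure.
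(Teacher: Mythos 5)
Your proof is correct and follows essentially the same route as the paper's: both recognize the block matrix as the Gram matrix of $\bigl(\begin{smallmatrix}1&0\\ m&M\end{smallmatrix}\bigr)$ and exploit $\mathrm{col}(M)^{\perp}=\mathrm{span}(n)$ to replace $m$ by its normal component $(n^{\rm T}m)n$ via a determinant-preserving elementary operation (the paper's congruence by $A$ with $u=-a$ is the two-sided version of your single column operation, which indeed suffices here since block-triangularity already factors the determinant). No gaps.
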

\begin{proof}
Note that
\[
\begin{pmatrix} 1+m^{\rm T}m & m^{\rm T}M \\ M^{\rm T}m & M^{\rm T}M \end{pmatrix} = \begin{pmatrix} 1 & m^{\rm T}\\ 0 & M^{\rm T}\end{pmatrix} \begin{pmatrix} 1 & 0\\ m & M \end{pmatrix}.
\]
Our strategy will be to use a transformation matrix which has determinant one and changes the $m$ on the off-diagonal blocks in the factors to a multiple of $n$. This will impose in the product a zero off-diagonal block, since $n^{\rm T}M=0$. Hence, let $u\in\R^{k-1}$ be such that $Mu+m = (n^{\rm T}m) n$. This is possible since $M$ has full rank and by $n^{\rm T} M=0$ it holds $\text{Range}(M)^{\perp} = \R n$. Define $A:=\begin{pmatrix} 1 & 0\\ u & I_{k-1\times k-1} \end{pmatrix}$, and note $\det(A)=1$. By this,
\begin{eqnarray*}
\det\left(\begin{pmatrix} 1 & m^{\rm T} \\ 0 & M^{\rm T}&\end{pmatrix} \begin{pmatrix} 1 & 0\\ m & M \end{pmatrix}\right) & = & \det\left(A^{\rm T}\begin{pmatrix} 1 & m^{\rm T} \\ 0 & M^{\rm T}&\end{pmatrix} \begin{pmatrix} 1 & 0\\ m & M \end{pmatrix}A\right)\\
& = & \det\left(\begin{pmatrix} 1 & (n^{\rm T}m) n^{\rm T} \\ 0 & M^{\rm T}&\end{pmatrix} \begin{pmatrix} 1 & 0\\ (n^{\rm T}m) n & M \end{pmatrix}\right) \\
& = & \det \begin{pmatrix} 1 + (n^{\rm T}m)^2 & 0\\ 0 & M^{\rm T}M&\end{pmatrix}\\
& = & \left(1 + (n^{\rm T}m)^2\right)\det(M^{\rm T}M).
\end{eqnarray*}
\end{proof}

\subsection{Proof of Proposition \ref{prop:nice=measurable}} \label{app:nice=measurable}

Clearly, $A_{s,t} \subseteq A^n:=\bigcap_{i=1}^n \phi_{r_i,s}A_{r_i}$, and $A^{n+1}\subseteq A^n$. We show that\footnote{For a sequence of sets $(A^n)_{n\in\N}$ and a set $A$ we write $A^n\downarrow A$, if $A^{n+1}\subseteq A^n$ for every~$n\in\N$, and $\bigcap_{n\in\N}A^n =  A$.} $A^n\downarrow A_{s,t}$. Let us fix some~$x\in X$. Note that since the sets $A_r$ are closed, we have
\[
x\notin A_{s,t} \quad\Leftrightarrow\quad \exists\,r_*:\, x\notin \phi_{r_*,s}A_{r_*} \quad\Leftrightarrow\quad d(x,\phi_{r_*,s}A_{r_*})\ge \epsilon \text{ for some }\epsilon > 0\,.
\]
Due to property (b) of the niceness, the function $r\mapsto d(x,\phi_{r,s}A_r)$ is continuous, hence there is a $\delta>0$ such that $d(x,\phi_{r,s}A_r)>\epsilon/2$ for all $r\in(r_*-\delta,r_*+\delta)$. Then, there is an~$N\ge 1$ such that $\{r_1,\ldots,r_n\} \cap (r_*-\delta,r_*+\delta) \neq \emptyset$ for every~$n\ge N$, hence
\[
x\notin  \bigcap_{i=1}^n \phi_{r_i,s}A_{r_i} = A^n.
\]
Thus,~$A^n\downarrow A_{s,t}$ as~$n\to\infty$. As~$A_{s,t}$ is a countable intersection of measurable sets, is is measurable itself.

By the nestedness of the $A^n$, i.e.\ $A^{n+1}\subseteq A^n$ for $n\ge 1$, and the $\sigma$-additivity of~$m$ we have
\begin{eqnarray*}
m(A_{s,t}) & = & m(A^1) - \sum_{n=1}^{\infty} m(A^n\setminus A^{n+1}) \\
	 & = & \lim_{k\to\infty}
	 \left( m(A^1) - \sum_{n=1}^{k-1} m(A^n\setminus A^{n+1})\right)\\
	 & = & \lim_{k\to\infty} m(A^k)\,.
\end{eqnarray*}

\subsection{Proof of Theorem \ref{thm:escrate_indep}} \label{app:indeptime}

We begin with a lemma.
\begin{lemma}\label{lem:trafo}
Let $\varphi:X\to X$ be a diffeomorphism on a compact set~$X$. For every measurable $A\subset X$ one has
\[
|D\varphi|_{\min}\, m(A) \le m(\varphi(A)) \le |D\varphi|_{\max}\, m(A)\,,
\]
where $0<|D\varphi|_{\min} = \min_{x\in X}|\det D\varphi(x)|$, and $|D\varphi|_{\max} = {\max_{x\in X}|\det D\varphi(x)|<\infty}$.
\end{lemma}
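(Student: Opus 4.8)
The plan is to reduce the statement to the change-of-variables (substitution) theorem for $C^1$-diffeomorphisms, and then exploit compactness of $X$ to replace the Jacobian by its extremal values.

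First I would record the elementary facts about the constants. Since $\varphi$ is a diffeomorphism, the map $x\mapsto|\det D\varphi(x)|$ is continuous on the compact set~$X$, hence attains a minimum and a maximum there; these are precisely $|D\varphi|_{\min}$ and $|D\varphi|_{\max}$. Invertibility of $D\varphi(x)$ at every point gives $|\det D\varphi(x)|>0$ for all $x$, so $|D\varphi|_{\min}>0$, and $|D\varphi|_{\max}<\infty$ trivially.

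Next, for a measurable $A\subseteq X$ I would apply the substitution theorem. Because $\varphi$ restricted to the compact set~$X$ is Lipschitz with Lipschitz inverse, it maps Lebesgue measurable sets to Lebesgue measurable sets (and null sets to null sets), so $\varphi(A)$ is measurable and the $C^1$ change-of-variables formula applies to $\mathbbm{1}_{\varphi(A)}$. Using $\mathbbm{1}_{\varphi(A)}\circ\varphi=\mathbbm{1}_A$, this yields
\[
m(\varphi(A))=\int_X\mathbbm{1}_{\varphi(A)}(y)\,dm(y)=\int_X\mathbbm{1}_A(x)\,|\det D\varphi(x)|\,dm(x)=\int_A|\det D\varphi(x)|\,dm(x).
\]
Finally I would bound the integrand pointwise by $|D\varphi|_{\min}\le|\det D\varphi(x)|\le|D\varphi|_{\max}$ for $x\in A$, integrate over~$A$, and use $\int_A dm=m(A)$ to obtain the claimed two-sided inequality.

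The only step needing a word of care is the applicability of the substitution formula to an \emph{arbitrary} measurable set $A$ (rather than an open or compact one) together with the measurability of $\varphi(A)$; this is dispatched by the Lipschitz-on-compacts remark above, after which the standard theorem applies verbatim. Everything else is routine, so I expect no substantial obstacle here.
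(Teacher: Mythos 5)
Your proposal is correct and follows essentially the same route as the paper: apply the change-of-variables formula to $\mathbbm{1}_{\varphi(A)}$ to get $m(\varphi(A))=\int_A|\det D\varphi|\,dm$, then bound the integrand by its extrema, which exist by continuity of $|\det D\varphi|$ on the compact set $X$. Your extra remark on why $\varphi(A)$ is Lebesgue measurable for an arbitrary measurable $A$ (via the Lipschitz/null-set argument) is in fact slightly more careful than the paper's one-line appeal to continuity of $\varphi^{-1}$.
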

\begin{proof}
Note that since~$\varphi$ is a diffeomorphism, $\varphi^{-1}$ is a continuous map, hence $\varphi(A)$ is a measurable set. By the integral transformation theorem
\[
m(\varphi(A)) = \int_{\varphi(A)}1\,dx = \int_A 1\,|\det D\varphi(x)|\,dx,
\]
from which the inequalities follow immediately. Since~$\varphi$ is a diffeomorphism, $D\varphi$ is continuous and everywhere nonsingular, hence by the compactness of~$X$ the continuous function $|\det(D\varphi)|$, which is bounded away from zero, attains its min, which is positive, and its max, which is finite.
\end{proof}
Next, by splitting the intersection, we obtain for $s_1\le s_2\le t$
\[
\bigcap_{r=s_1}^t\phi_{r,s_1}A_r = \left(\bigcap_{r=s_1}^{s_2}\phi_{r,s_1}A_r\right)\,\cap\,\left(\phi_{s_2,s_1}\bigcap_{r=s_2}^t\phi_{r,s_2}A_r\right)\,,
\]
which reads in our shorthand notation as
\begin{equation}
A_{s_1,t} = A_{s_1,s_2} \cap \phi_{s_2,s_1}A_{s_2,t}\,.
\label{eq:expand_remset}
\end{equation}
This implies
\begin{equation}
m(A_{s_1,t}) \le \min\left\{m(A_{s_1,s_2}), m(\phi_{s_2,s_1}A_{s_2,t})\right\} \le m(\phi_{s_2,s_1}A_{s_2,t})\,.
\label{eq:measbound}
\end{equation}
Using continuity of the vector field~$v$ over~$\tau S^1 \times X$ we have that~$\phi_{s_2,s_1}$ is a diffeomorphism for every~$s_2\ge s_1$, and $|\det D\phi_{s_2,s_1}|$ is continuous in~$s_1, s_2$. Hence there are~$J_{\min}>0$ and~$J_{\max}<\infty$ such that~$J_{\min} \le |\det D\phi_{s_2,s_1}(x)| \le J_{\max}<\infty$ for every~$s_1, s_2\in\R\slash \tau\Z$ and~$x\in X$. Lemma~\ref{lem:trafo} with~\eqref{eq:measbound} yields
\begin{corollary} \label{cor:measbound}
For $s_1\le s_2\le t$ we have
\[
m(A_{s_1,t}) \le J_{\max}\, m(A_{s_2,t})\,.
\]
\end{corollary}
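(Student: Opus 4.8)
The corollary itself asks for no new work beyond chaining two facts already in hand. Equation~\eqref{eq:measbound} gives $m(A_{s_1,t}) \le m(\phi_{s_2,s_1}A_{s_2,t})$, and Lemma~\ref{lem:trafo}, applied to the diffeomorphism $\varphi=\phi_{s_2,s_1}$, gives $m(\phi_{s_2,s_1}A_{s_2,t}) \le |D\phi_{s_2,s_1}|_{\max}\,m(A_{s_2,t}) \le J_{\max}\,m(A_{s_2,t})$. Concatenating these yields the claim. The only point that needs care is that $J_{\max}$ is a \emph{single} constant valid for all admissible pairs $(s_1,s_2)$: one restricts to flow times $s_2-s_1$ lying in a single period and then invokes compactness of $\tau S^1\times X$ together with the $C^1$-dependence of the flow on its arguments, exactly as in the paragraph preceding the corollary.

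The reason for isolating the corollary is that it drives the proof of Theorem~\ref{thm:escrate_indep}, which I would complete as follows. Fix $s_1\le s_2$ with $s_2-s_1<\tau$ (the case $s_1=s_2$ being trivial). Taking logarithms in Corollary~\ref{cor:measbound}, dividing by $t-s_1$, and letting $t\to\infty$, the term $\frac{\log J_{\max}}{t-s_1}$ vanishes, while
\[
\frac{1}{t-s_1}\log m(A_{s_2,t}) = \frac{t-s_2}{t-s_1}\cdot\frac{1}{t-s_2}\log m(A_{s_2,t}), \qquad \frac{t-s_2}{t-s_1}\longrightarrow 1 .
\]
Hence $\limsup_{t\to\infty}\frac{1}{t-s_1}\log m(A_{s_1,t}) \le \limsup_{t\to\infty}\frac{1}{t-s_2}\log m(A_{s_2,t})$, i.e.\ $E(\{A_r\}_{r\ge s_1}) \ge E(\{A_r\}_{r\ge s_2})$.

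For the reverse inequality I would exploit periodicity. Using that $v$ and $\{A_\theta\}$ have period $\tau$, one checks $A_{s+\tau,\,t+\tau}=A_{s,t}$ (shift the intersection index and use $\phi_{r+\tau,s+\tau}=\phi_{r,s}$ and $A_{r+\tau}=A_r$), and therefore $E(\{A_r\}_{r\ge s+\tau})=E(\{A_r\}_{r\ge s})$, so the escape rate is a $\tau$-periodic function of the anchor time. Taking $s_1,s_2\in[0,\tau)$, we have $s_2\le s_1+\tau$, and applying the inequality just proved to the ordered pair $(s_2,\,s_1+\tau)$ gives $E(\{A_r\}_{r\ge s_2})\ge E(\{A_r\}_{r\ge s_1+\tau}) = E(\{A_r\}_{r\ge s_1})$; combining with the previous step gives equality.

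The only genuine obstacle is bookkeeping rather than analysis: one must keep all flow times within one period so that the single uniform Jacobian constant $J_{\max}$ applies, and one must invoke the $\tau$-periodicity of the escape rate in order to upgrade the one-sided estimate coming from Corollary~\ref{cor:measbound} to the claimed equality. The $\limsup$ manipulation itself is routine, since $\log J_{\max}$ is additive and washes out under the Cesàro-type normalization by $1/(t-s_1)$, and $\frac{t-s_2}{t-s_1}\to 1$.
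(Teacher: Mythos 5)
Your proof of the corollary is correct and is exactly the paper's argument: chain the inclusion $A_{s_1,t}\subseteq \phi_{s_2,s_1}A_{s_2,t}$ from~\eqref{eq:measbound} with Lemma~\ref{lem:trafo}, using the uniform Jacobian bound $J_{\max}$ obtained from compactness of $\tau S^1\times X$ and continuity of $|\det D\phi_{s_2,s_1}|$ in its arguments. The subsequent deduction of Theorem~\ref{thm:escrate_indep} (one-sided inequality from the corollary, reverse inequality via $\tau$-periodicity applied to the pair $(s_2,s_1+\tau)$) also coincides with the paper's Appendix~\ref{app:indeptime}.
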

Due to the monotonicity of the logarithm this corollary yields
\[
\log m(A_{s_1,t}) \le \log J_{\max} + \log m(A_{s_2,t})\,.
\]
Taking the $-\limsup_{t\to\infty}\tfrac1t$ of both sides, we obtain
\begin{equation}
E(\{A_r\}_{r\ge s_1}) \ge E(\{A_r\}_{r\ge s_2}),\qquad \text{for all }s_1\le s_2\,.
\label{eq:escrateineq1}
\end{equation}
Observe that due to periodicity~$E(\{A_r\}_{r\ge s}) = E(\{A_r\}_{r\ge s+\tau})$, since for~$t\ge s+\tau$ one has
\[
\bigcap_{r=s+\tau}^t \phi_{r,s+\tau}A_r = \bigcap_{r=s}^{t-\tau} \phi_{r+\tau,s+\tau}A_{r+\tau} = \bigcap_{r=s}^{t-\tau}\phi_{r,s}A_r\,,
\]
the first equation resulting from a change of variable $r\to r+\tau$, while the second one coming from periodicity of both the vector field and the family of sets. With this we have from~\eqref{eq:escrateineq1} that
\begin{equation}
E(\{A_r\}_{r\ge s_1}) \ge E(\{A_r\}_{r\ge s_2+\tau})=E(\{A_r\}_{r\ge s_2}),\qquad \text{for all }s_2\le s_1\le s_2+\tau\,.
\label{eq:escrateineq2}
\end{equation}
Equations~\eqref{eq:escrateineq1} and~\eqref{eq:escrateineq2} together yield Theorem~\ref{thm:escrate_indep}.

\subsection{Proof of Theorem \ref{thm:escrate_nonauto_aug}} \label{app:augrate}
Recall the identity~\eqref{eq:remainingset}. From this we have
\[
\aug{m}\left(\bigcap_{s=0}^t\aug{\phi}_{-s}\aug{A}\right) = \int_0^\tau m(A_{s,s+t})\,ds\,.
\]
Using Corollary~\ref{cor:measbound} twice gives
\[
\tau J_{\max}^{-1}\, m(A_{0,s+t}) \le \int_0^{\tau} m(A_{s,s+t})\,ds \le \tau J_{\max}\, m(A_{\tau,s+t})
\]
for sufficiently large~$t$. Taking the $-\limsup_{t\to\infty}\tfrac1t\log$ of both sides and using Theorem~\ref{thm:escrate_indep} yields the claim.

%

\subsection{Proof of Theorem \ref{thm:cont_time_escrate}} \label{app:auto_escrate}

Fix $t>0$ and let $(r_i)_{i\in\N}$ be a dense sequence in $[0,t]$ such that $r_1 = t$; this latter condition is needed such that a decomposition as in~\eqref{eq:eventsplit} is always possible. Further, define the signed measure $\nu$ via $d\nu(x) = f(x)\,dm(x)$.

We consider the events
\[
\mathcal{E}_n := \left\{\omega\,\vert\, x_{r_i}(\omega)\in A^+,\ \forall\,i=1,\ldots,n\right\}\,.
\]
As in the proof of Proposition~\ref{prop:nice=measurable}, the continuity of sample paths and the closedness of~$A^+$ implies $\mathcal{E}_n\downarrow\mathcal{E}:=\bigcap_{r\in[0,t]} \{\omega\,\vert\, x_r(\omega)\in A^+\}$, hence the measurability of $\mathcal{E}$. The very same proof yields also $\prob_{\nu}(\mathcal{E}_n) \to \prob_{\nu}(\mathcal{E})$ as $n\to\infty$; where $\prob_{\nu} = \prob_{\nu^+} - \prob_{\nu^-}$.

Note that since~$\lambda<0$, one has $\int fdm = 0$, hence we can take~$f$ such that $\int f^+dm = \int_{A^+}f dm= 1$. It follows that for every $n\in\N$ we have~\cite{FrJuKo13}
\begin{eqnarray}
e^{\lambda t} & = & e^{\lambda t}\int_{A^+} f dm\nonumber\\
& = & \int_{A^+} \P_t f \nonumber\\
& = & \prob_{\nu}(x_t\in A^+) \nonumber\\
& = & \prob_{\nu}(x_{r_i}\in A^+,\ i=1,\ldots,n) + \sum_{j=2}^n \underbrace{\prob_{\nu}(x_{r_j}\notin A^+,\ x_{r_i}\in A^+,\ \forall\ r_i>r_j)}_{=: p_j}. \label{eq:eventsplit}
\end{eqnarray}
The last equality follows from the decomposition of the event~$\{x_{r_1} \in A^+\} = \{x_t \in A^+\}$ into disjoint events $\{x_{r_i}\in A^+ \text{ for all }r_i>r_j, \text{ but }x_{r_j}\notin A^+\}$, $j=2,\ldots,n$, and $\{x_{r_i}\in A^+ \text{ for all }i=1,\ldots,n\}$. One can show~\cite{FrJuKo13} that $p_j\le 0$, because the set of initial conditions $x_{r_j}\notin A^+$ is contained in the non-positive support of~$\nu$. It follows that
\begin{equation}
e^{\lambda t}\le \prob_{\nu}(x_{r_i}\in A^+,\ i=1,\ldots,n) = \prob_{\nu}(\mathcal{E}_n).
\label{eq:eventestim}
\end{equation}
Thus,
\[
e^{\lambda t} \le \lim_{n\to\infty} \prob_{\nu}(\mathcal{E}_n) = \prob_{\nu}(\mathcal{E})\,.
\]
From now on, the proof follows the lines of~\cite{FrSt10,FrSt13}. Noting that $\prob_{\nu}(\cdot)\le \|f\|_{L^{\infty}}\prob_{m}(\cdot)$, we obtain for every $t>0$ that
\[
\frac1t \log e^{\lambda t} \le \frac1t \log \prob_{\nu}(\mathcal{E}) \le \frac1t \left(\log \prob_m(\mathcal{E}) + \log\|f\|_{L^{\infty}}\right)\,.
\]
Taking the $-\limsup_{t\to\infty}$ of both sides we conclude the proof.

\subsection{Proof of Theorem~\ref{thm:nonauto_escrate}} \label{app:nonauto_escrate}

The proof follows the lines of that of Theorem~\ref{thm:cont_time_escrate} (Appendix~\ref{app:auto_escrate}). For a fixed $t>s$ and a sequence $(r_i)_{i\in\N}$, dense in $[s,t]$ and satisfying $r_1=t$, we consider the events
\[
\mathcal{E}_n:=\big\{\omega\,\vert\, x_{r_i}(\omega)\in A_{r_i}^+,\ \forall i=1,\ldots,n \big\}\,.
\]
The sufficiently niceness of the family~$\{A_r\}$ means by Definition~\ref{def:suff_niceness}~(ii), in particular, that the~$A_r^+$ are closed and either left- of right-continuous in~$r$, thus one can show that~$\mathcal{E}_n\downarrow \mathcal{E}:= \big\{\omega\,\vert\, x_r(\omega)\in A_r^+,\ \forall r\in [s,t]\big\}$.

Since $\P_{s,t}$ is a Markov operator (positive and integral preserving), we have ${\int\P_{s,t}f = 0}$ due to $\int f=0$, and hence $\tfrac12\|\P_{s,t}f\|_1 = \int_{A_t^+}\P_{s,t}f$. Let $\nu$ be the signed measure with $d\nu = f\,dm$, and for the ease of notation denote $\prob_{x_s\sim \nu}(\cdot)$ by $\prob_{\nu}(\cdot)$. Then we obtain in an analogous manner as above, that
\begin{equation}
\frac12 \|\P_{s,t}f\|_1 = \int_{A_t^+}\P_{s,t}f = \prob_{\nu}\big(x_t\in A_t^+\big) \le \prob_{\nu}(\mathcal{E}_n) \stackrel{n\to\infty}{\longrightarrow} \prob_{\nu}(\mathcal{E})
\end{equation}
As $f\in L^{\infty}$, it follows by the monotonicity of the logarithm that
\begin{eqnarray*}
\Lambda_s(f) & = & \limsup_{t\to\infty} \frac{1}{t-s}\left(\log \|P_{s,t}f\|_1 + \log\tfrac12\right) \\
& \le & \limsup_{t\to\infty} \frac{1}{t-s}\left(\log\prob_m(\mathcal{E}) + \log\|f\|_{\infty}\right) \\
& = & -E(\{A_r\})\,,
\end{eqnarray*}
completing the proof of $E(\{A_r\}) \le -\Lambda_s(f)$.

\subsection{Proof of Proposition \ref{prop:CohPoiB}} \label{app:decaybound1}

We start with proving the independence on the starting time.
\begin{lemma}	\label{lem:cycInterchange}
If $\lambda\in\sigma(\P_{s,s+\tau})$ for one $s\in \tau S^1$, then it holds for every~$s\in \tau S^1$.
\end{lemma}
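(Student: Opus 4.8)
The key point is that for two different anchor times the monodromy operators $\P_{s,s+\tau}$ differ only by the order in which two fixed bounded operators are multiplied, and products $BC$ and $CB$ have the same spectrum away from $0$. Since $\P_{s+\tau,t+\tau}=\P_{s,t}$ by $\tau$-periodicity of the vector field, it suffices to compare two anchor times $s_1<s_2$ lying in one period, say $s_1,s_2\in[0,\tau)$.

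First I would write out the two monodromy operators using the cocycle (evolution) identity $\P_{r,t}\P_{s,r}=\P_{s,t}$, valid for $s\le r\le t$. Taking the intermediate time $r=s_2$ gives $\P_{s_1,s_1+\tau}=\P_{s_2,s_1+\tau}\P_{s_1,s_2}$. Taking instead the intermediate time $r=s_1+\tau$ (which satisfies $s_2\le s_1+\tau\le s_2+\tau$) and then using periodicity $\P_{s_1+\tau,s_2+\tau}=\P_{s_1,s_2}$ gives $\P_{s_2,s_2+\tau}=\P_{s_1,s_2}\P_{s_2,s_1+\tau}$. Setting $B:=\P_{s_1,s_2}$ and $C:=\P_{s_2,s_1+\tau}$, both bounded (in fact contractions) on $L^1$, we have exhibited $\P_{s_1,s_1+\tau}=CB$ and $\P_{s_2,s_2+\tau}=BC$ with the \emph{same} $B,C$.

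Next I would invoke the standard fact that $\sigma(BC)\setminus\{0\}=\sigma(CB)\setminus\{0\}$: if $\lambda\neq 0$ and $\lambda\in\rho(BC)$, one verifies by direct multiplication that $\lambda^{-1}\bigl(I+C(\lambda I-BC)^{-1}B\bigr)$ is a two-sided inverse of $\lambda I-CB$, so $\lambda\in\rho(CB)$; the converse is symmetric. Thus $\lambda\in\sigma(\P_{s_1,s_1+\tau})\iff\lambda\in\sigma(\P_{s_2,s_2+\tau})$ for every $\lambda\neq 0$. At the level of eigenvalues this is even more elementary: $CBx=\lambda x$ with $x\neq 0$ and $\lambda\neq 0$ forces $Bx\neq 0$ and $BC(Bx)=\lambda(Bx)$; this is the version actually needed for $\lambda_2$ in Proposition~\ref{prop:CohPoiB}.

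It remains to deal with $\lambda=0$, for which the product trick alone is insufficient (for general $B,C$ one of $BC$, $CB$ can be invertible while the other is not). Here I would use Assumption~\ref{ass:regularity}: the transition density $q_{s,t}(\cdot,\cdot)$ is continuous on the compact set $\overline{X}\times\overline{X}$ for $t>s$, so $\P_{s,t}$ maps the unit ball of $L^1$ into a bounded, equicontinuous subset of $C(\overline{X})$, hence by Arzel\`a--Ascoli $\P_{s,t}$ is compact; on the infinite-dimensional space $L^1$ this forces $0\in\sigma(\P_{s,s+\tau})$ for every $s$. Combining the two cases yields $\sigma(\P_{s_1,s_1+\tau})=\sigma(\P_{s_2,s_2+\tau})$, and since $s_1,s_2$ were arbitrary, the lemma follows. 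The only real obstacle is the bookkeeping that pins down the decomposition with literally the same $B$ and $C$ (this dictates the choice of intermediate times and the single use of periodicity), together with noticing that $\lambda=0$ genuinely requires a separate argument rather than a naive spectral shift.
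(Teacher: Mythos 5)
Your proof is correct and follows essentially the same route as the paper: write $\P_{s_1,s_1+\tau}=\P_{s_2,s_1+\tau}\P_{s_1,s_2}$ and $\P_{s_2,s_2+\tau}=\P_{s_1,s_2}\P_{s_2,s_1+\tau}$ via the cocycle identity and $\tau$-periodicity, then use that $\sigma(BC)$ and $\sigma(CB)$ agree (away from $0$). Your separate treatment of $\lambda=0$ via compactness of $\P_{s,t}$ is a small but genuine improvement in rigor over the paper's bare appeal to ``cyclical interchangeability of factors,'' which strictly only covers nonzero spectrum; since the lemma is only ever used for the nonzero eigenvalue $\lambda_2$, this extra step is not needed downstream but is welcome.
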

\begin{proof}
Let $s_1,s_2\in \tau S^1$. Without loss $s_2>s_1$, as elements of~$[0,\tau)$. Since $\P_{s_1,s_1+\tau} = \P_{s_2,\tau+s_1}\P_{s_1,s_2}$, we have by the cyclical interchangeability of factors in the operator product without changing the spectrum of the product, that
\begin{eqnarray*}
\sigma(\P_{s_1,s_1+\tau}) & = & \sigma(\P_{s_2,s_1+\tau} \P_{s_1,s_2}) \\
			  & = & \sigma(\P_{s_1+\tau,s_2+\tau} \P_{s_2,s_1+\tau}) \\
			  & = & \sigma(\P_{s_2,s_2+\tau})\,.
\end{eqnarray*}
\end{proof}

Now we continue proving Proposition~\ref{prop:CohPoiB}. For simplicity, let us write $\lambda_2=\lambda_2(\P_{s,s+\tau})$. We show the first equality in~\eqref{eq:CohPoiB} by showing first~``$\ge$'', then~``$\le$''. One has (because the maximum is always taken with respect to functions satisfying ${\int f=0}$)

\begin{eqnarray}
\max_{f} \limsup_{t\to\infty}\frac{1}{t-s} \log \|P_{s,t}f\|_1 & \ge & \max_{f}\limsup_{k\to\infty}\frac{1}{k\tau}\log\left\|\P_{s,s+k\tau} f\right\|_1 \nonumber\\
& = & \max_{f}\limsup_{k\to\infty}\frac{1}{k\tau}\log\left\|\left[\P_{s,s+\tau}\right]^kf\right\|_1 \nonumber\\
	& = & \frac{1}{\tau} \log|\lambda_2| \label{eq:lambda2rate}
\end{eqnarray}
where the first inequality follows from restricting the~$\limsup$ to the times $\{k\tau\}_{k\in\N}$, and the next equation comes from the $\tau$-periodicity of the vector field. The final equality can be obtained by noting that~$\P_{s,s+\tau}$ is a compact operator (its spectrum has no accumulation points apart from $0$), all eigenfunctions~$g$ at eigenvalues~$\lambda\neq1$ satisfy $g\perp\mathbbm{1}$ (i.e.\ $\int g=0$), and using the Riesz (spectral) decomposition at the eigenvalue~$\lambda_2$, which yields that the expression in the norm grows at most as $|\lambda_2|^k$.

To show the other inequality, let~$\lfloor x\rfloor$ denote the integer part of~$x\in\R$, and define~$k(t):= \lfloor \tfrac{t-s}{\tau} \rfloor$. Then, by splitting the time interval $[s,t]$ into $[s,t-k(t)\tau]$ and $[t-k(t)\tau,t]$, we have
\begin{equation}
\|\P_{s,t}f\|_1 = \|\P_{t-k(t)\tau,t}\P_{s,t-k(t)\tau}f\|_1 \le \|\P_{t-k(t)\tau,t}\|_{1,\mathbbm{1}^{\perp}}\|\P_{s,t-k(t)\tau}f\|_1\,,
\label{eq:Psplit}
\end{equation}
where $\|\cdot\|_{1,\mathbbm{1}^{\perp}}$ denotes the induced $L^1$ operator norm on the (invariant) subspace $\{f\in L^1\,\vert\, \int f =0\}$. The invariance follows from~$\int\P_{s,r}f = 0$ for every~$r\ge s$ and~$\int f=0$.
Thus, \eqref{eq:Psplit} yields
\begin{multline}
\max_{f} \limsup_{t\to\infty} \frac{1}{t-s}\log\|\P_{s,t}f\|_1 \\
\hspace*{2em} \le \max_{f} \limsup_{t\to\infty} \frac{1}{t-s}\left(\log\|\P_{t-k(t)\tau,t}\|_{1,\mathbbm{1}^{\perp}} + \log \|\P_{s,t-k(t)\tau}f\|_1\right) \hfill \\
\hspace*{2em} \le \limsup_{t\to\infty} \frac{1}{t-s}\log\|\P_{t-k(t)\tau,t}\|_{1,\mathbbm{1}^{\perp}} + \max_{f} \limsup_{t\to\infty} \frac{1}{t-s} \log \|\P_{s,t-k(t)\tau}f\|_1 \hfill
\label{eq:ratesplit}
\end{multline}
Now, due to periodicity of the forcing,
\[
\P_{t-k(t)\tau,t} = \left[\P_{t-k(t)\tau,t-(k(t)-1)\tau}\right]^{k(t)},
\]
and as in~\eqref{eq:lambda2rate}, we have that the first term in~\eqref{eq:ratesplit} is equal to~$\frac{1}{\tau} \log|\lambda_2|$. For the second term, note that $t\mapsto \|\P_{s,t-k(t)\tau}f\|_1$ is a periodic function bounded from above by $\|f\|_1$ (this latter property being a consequence of the fact that~$\P_{s,r}$ is a Markov operator). Consequently, $\limsup_{t\to\infty} \frac{1}{t-s} \log \|\P_{s,t-k(t)\tau}f\|_1=0$ for every~$f$, and so is the right term in~\eqref{eq:ratesplit}. This concludes the proof of Proposition~\ref{prop:CohPoiB}.

\subsection{Proof of Lemma~\ref{lem:spectral_con} and Theorem~\ref{thm:IGcohPeriodic}}
\label{app:auggenproofs}

We start with a technical lemma collecting some properties of the transfer operator family. Extensive work has been done here~\cite{Kato61,DaPGr79,Paz83,AcqTe85,AcqTe87,Acq88,But92, Ama95,Lun95,Pruss2001}, unfortunately results on the~$L^1$-case in particular with explicit application to partial differential equations seem to be scarce. Here, we apply the results of~\cite{Tan96} and of~\cite{Ama83} to our specific setting.

\begin{lemma} \label{lem:analytic evolution}
Let Assumption~\ref{ass:regularity} hold, then the following are true.
\begin{enumerate}[(a)]

\item Let~$t_*>s$ be arbitrary. The abstract (nonautonomous) evolution equation
\begin{equation}
\partial_t u_t = \IG_t u_t,\quad u_s = f,
\label{eq:evo_eq}
\end{equation}
\begin{enumerate}[(i)]
\item has a unique solution~$u\in C\left(\left[s,t_*\right];L^1\right)\cap C^1\left(\left(s,t_*\right];L^1\right)$, such that~$u_t\in D(\IG_t)$ for~$s<t\le t_*$. It is given by~$u_t = \P_{s,t}f$ on~$L^1$,
\end{enumerate}
where the transfer operator family~$\P_{s,t}:L^1\to L^1$ satisfies
\begin{enumerate}[(i)]
\item[(ii)] $\P_{s,r}\P_{r,t} = \P_{s,t}$ for all $s\le r\le t$;
\item[(iii)] there is a~$C>0$ such that~$\|\partial_t \P_{s,t}\|_{L^1} = \|\IG_t\P_{s,t}\|_{L^1} \le \frac{C}{t-s}$ for all~$s<t$.
\end{enumerate}
\item The operator~$\P_{s,t}:L^1\to L^1$ is compact for every~$s<t$.
\end{enumerate}
\end{lemma}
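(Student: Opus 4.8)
The plan is to deduce everything from the classical theory of non\-autonomous parabolic evolution equations, specialized to our uniformly elliptic generator, and then to descend from $L^p$, $p>1$, to the endpoint $L^1$. Under Assumption~\ref{ass:regularity} the operator $\IG_t f = \tfrac{\ep^2}{2}\Delta f - v_t\cdot\nabla f - (\operatorname{div} v_t)\,f$ (the expansion of $\tfrac{\ep^2}{2}\Delta f - \operatorname{div}(f v_t)$) is uniformly elliptic with constant-coefficient principal part, lower-order coefficients continuous on $\overline X$ depending $C^1$ on $t$, and — because the reflecting boundary condition together with tangency of $v$ forces the homogeneous Neumann condition $\partial_n g|_{\partial X}=0$ in~\eqref{eq:FPeq} — it is realized with Neumann boundary conditions. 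Hence for each fixed $t$ it generates an analytic semigroup on $L^p(X)$, $1<p<\infty$, with domain $D(\IG_t)=\{g\in W^{2,p}(X):\partial_n g|_{\partial X}=0\}$, and $t\mapsto\IG_t$ satisfies the Kato--Tanabe hypotheses ($C^1$, hence H\"older, dependence on $t$ is more than sufficient). First I would invoke the non\-autonomous parabolic theory of~\cite{Tan96,Ama83} to produce the evolution family $\P_{s,t}:L^p\to L^p$, $t\ge s$, satisfying the cocycle identity (ii), the parabolic smoothing estimate (iii) on $L^p$, and the statement that $u_t:=\P_{s,t}f$ is the unique solution of~\eqref{eq:evo_eq} in $C([s,t_*];L^p)\cap C^1((s,t_*];L^p)$ with $u_t\in D(\IG_t)$ for $t>s$. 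This step is a verification of hypotheses plus citation.

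Second, I would identify $\P_{s,t}$ with the transfer operator of Section~\ref{ssec:transferop} by observing that both are given by the fundamental solution $\Gamma(x,t;y,s)$ of the forward (Fokker--Planck) problem, i.e.\ $\P_{s,t}f=\int_X\Gamma(\cdot,t;y,s)f(y)\,dm(y)$ with $q_{s,t}(y,x)=\Gamma(x,t;y,s)$. For the smooth-coefficient Neumann problem on a $C^4$ domain, $\Gamma$ exists, is jointly continuous for $t>s$, is $C^{2,1}$ in $(x,t)$ there with $\partial_t\Gamma=\IG_t\Gamma$ in the $x$-variable, and obeys the classical Gaussian bounds
\[
|\Gamma(x,t;y,s)|\le C(t-s)^{-d/2}e^{-c|x-y|^2/(t-s)},\qquad
|\partial_t\Gamma(x,t;y,s)|\le C(t-s)^{-d/2-1}e^{-c|x-y|^2/(t-s)}
\]
up to the boundary. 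From the first bound together with $\Gamma\ge 0$ and $\int_X\Gamma(x,t;y,s)\,dx=1$ one recovers that $\P_{s,t}$ is a positive, integral-preserving $L^1$-contraction (cf.\ the discussion around~\eqref{eq:defIG} and~\cite{BaRo95}); since $X$ is bounded, $L^p(X)$ is dense in $L^1(X)$, so the $L^p$-operator extends uniquely to this $L^1$-contraction, with (ii) persisting by density. For $f\in L^1$ and $s<t$, smoothing of $\Gamma$ gives $u_t=\P_{s,t}f\in C(\overline X)\cap\bigcap_{p<\infty}W^{2,p}(X)\subset D(\IG_t)$; approximating $f$ by $L^p$ data and using the $L^1$-contraction to pass to the limit uniformly on $[s,t_*]$ shows $t\mapsto u_t$ lies in $C([s,t_*];L^1)\cap C^1((s,t_*];L^1)$; and the second Gaussian bound yields (iii) on $L^1$ directly, since the $L^1\!\to\!L^1$ norm of a kernel operator equals $\sup_y\int_X|\text{kernel}(x,y)|\,dx$ and $\sup_y\int_X|\partial_t\Gamma(x,t;y,s)|\,dx\le C'(t-s)^{-1}$. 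Uniqueness within the stated class is standard (it descends from the $L^p$ statement, or follows by an energy estimate). This proves part (a).

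For part (b), I would again use the kernel representation: for fixed $s<t$ the kernel $q_{s,t}=\Gamma(\cdot,t;\cdot,s)$ is bounded and uniformly continuous on the compact set $\overline X\times\overline X$, so the family $\{\P_{s,t}f:\|f\|_{L^1}\le 1\}$ is bounded and equicontinuous in $C(\overline X)$; by the Arzel\`a--Ascoli theorem it is relatively compact in $C(\overline X)$, hence, since $X$ has finite Lebesgue measure and $C(\overline X)\hookrightarrow L^1$ continuously, relatively compact in $L^1$. Therefore $\P_{s,t}:L^1\to L^1$ is compact.

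The main obstacle is the $L^1$-endpoint: second-order elliptic operators are not sectorial on $L^1$ in the usual sense and their $L^1$-domains are not Sobolev spaces, so the Kato--Tanabe machinery cannot be run there directly. The workaround is to perform the hard analysis in $L^p$, $p>1$, and transfer to $L^1$ via density together with the Markov $L^1$-contraction property and pointwise bounds on the fundamental solution; consequently the proof relies on the availability of classical Gaussian estimates for $\Gamma$ and for $\partial_t\Gamma$ \emph{up to the boundary}, which is precisely what Assumption~\ref{ass:regularity} (a $C^4$ boundary, $C^1$ coefficients in space and time) is there to guarantee, and which is where the cited references~\cite{Tan96,Ama83} do the work.
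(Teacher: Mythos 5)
Your proposal is correct in substance but takes a genuinely different route from the paper's. The paper never leaves $L^1$: it checks that the Fokker--Planck operator with Neumann boundary conditions satisfies the hypotheses of Tanabe's $L^1$ theory for nonautonomous parabolic problems (\cite[Section~6.13]{Tan96}, resting on the abstract Acquistapace--Terreni framework), which delivers existence, uniqueness, the cocycle identity~(ii) and the smoothing estimate~(iii) directly on $L^1$; for part~(b) it combines~(iii) with Amann's embedding of $D(\IG_t)$ (graph norm) into $W^1_1$ \cite[Proposition~9.2]{Ama83} and the Rellich--Kondrachov theorem. So the ``main obstacle'' you identify is not actually an obstacle here: analytic-semigroup and parabolic evolution theory for elliptic boundary value problems \emph{does} hold on $L^1$ (even though $D(\IG_t)$ is not a Sobolev space), and exploiting that is precisely the point of the paper's choice of references. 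Your alternative --- run Kato--Tanabe on $L^p$, identify $\P_{s,t}$ with the kernel operator given by the fundamental solution $\Gamma$, and descend to $L^1$ via the Markov contraction property and Gaussian bounds on $\Gamma$ and $\partial_t\Gamma$ --- buys more concrete information (continuity of $\P_{s,t}f$ for $t>s$, the explicit $(t-s)^{-1}$ rate in~(iii) from $\sup_y\int_X|\partial_t\Gamma(x,t;y,s)|\,dx$, and an elementary Arzel\`a--Ascoli proof of compactness), but it shifts the entire technical burden onto the existence of a fundamental solution for the Neumann problem with Gaussian bounds on $\Gamma$ \emph{and} $\partial_t\Gamma$ up to the boundary; that is true under Assumption~\ref{ass:regularity} but is itself a nontrivial parametrix/Green's-function construction, so the citation work is comparable to the paper's verification of Tanabe's hypotheses. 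One small caveat: uniqueness in the stated $L^1$ class does not literally ``descend from the $L^p$ statement'' (an $L^1$ solution need not lie in any $L^p$, $p>1$); you need either the $L^1$ uniqueness assertion of the cited theory or a duality argument against solutions of the backward adjoint problem.
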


\begin{proof}
(a). The claims follow from~\cite[Section~6.13]{Tan96}, where the abstract evolution equation~\eqref{eq:evo_eq} in~$L^1$, corresponding to the partial differential equation (the Fokker--Planck equation~\eqref{eq:FPeq}), is shown to satisfy the conditions (called assumptions (P1), (P2), and (P4) therein) that allow the application of the abstract results of Acquistapace and Terreni~\cite{AcqTe86,AcqTe87}. A solution satisfying~(i) is called a \emph{classical solution} therein~\cite[Definition~6.1]{Tan96}, its uniqueness and representation by the transfer operator family (called \emph{fundamental solution} therein) are shown in~\cite[Section~6.12, Theorem~6.6]{Tan96}. Property~(ii) is a fundamental property of solutions (following from uniqueness), and~(iii) is given in~\cite[Section~6.10]{Tan96}.

Next we have to show that under Assumption~\ref{ass:regularity} the conditions of~\cite{Tan96} are satisfied for our Fokker--Planck equation~\eqref{eq:FPeq}. To this end, we need to define the boundary operator~$B(t,x) \equiv B(x): \xi \mapsto \langle n(x),\xi\rangle$ for~$\xi\in\R^d$, where~$n(x)$ is the unit outward normal at~$x\in\partial X$. We define the Fokker--Planck differential operator~$\mathcal{L}(t,x,\partial):= \frac{\ep^2}{2}\sum_{i=1}^d \partial_i^2 - \sum_{i=1}^d (\partial_i v_i(t,x)) - \sum_{i=1}^d v_i(t,x)\partial_i$, as the right-hand side of the Fokker--Planck equation~\eqref{eq:FPeq}.
Tanabe~\cite[Section~6.13]{Tan96} requires that
\begin{itemize}
\item $X$ is a bounded open domain of class~$C^4$ (the order~$m$ of our differential operator~$\mathcal{L}$ is~$2$, and the domain has to be of class~$C^{2m}$).
\item The coefficients of the differential operator~$\mathcal{L}$ and boundary operator~$B$ should be H\"older continuous (of some positive order) in~$t$. The same should hold for the coefficients of the adjoint boundary value problem~$(\mathcal{L}',B')$.
\end{itemize}
It is immediate that Assumption~\ref{ass:regularity} fulfills these conditions (note that the adjoint boundary conditions to the Neumann boundary conditions considered here,~$B(\partial)u = 0$, are the very same conditions, i.e.~$B' = B$). Further, it is required that the hypotheses of~\cite[Section~5.3]{Tan96} are satisfied by the operators~$B,\mathcal{L}$ for every~$t\in[s,t_*]$. These are the conditions of~\cite[Theorems~5.5 and~5.6]{Tan96}, i.e.
\begin{itemize}
\item local regularity of class~$C^{2m}$ and uniform regularity of class~$C^m$ of the domain~$X$;
\item conditions (iii) and (iv) in \cite[Section~5.2]{Tan96};
\end{itemize}
and the conditions of~\cite[Section~4.2]{Tan96}:
\begin{itemize}
\item smoothness conditions on the coefficients of the operator~$\mathcal{L}$ (boundedness and uniform continuity in~$x$ on~$\overline{X}$ is sufficient);
\item uniform ellipticity of~$\mathcal{L}$ in~$X$;
\item the so-called root condition for~$\mathcal{L}$;
\item smoothness of the coefficients of the boundary operator (for us, it means differentiability with bounded and uniformly continuous derivatives, which is given, since the regularity class of the domain implies the smoothness of the outward normal~$n(x)$ in~$x$); and
\item the so-called complementing condition for~$\mathcal{L}$ and~$B$.
\end{itemize}
All of these are straightforward to check. The root condition is satisfied by strongly elliptic operators~$\mathcal{L}$, cf.~\cite[Theorem~5.4]{Tan96}.
\\
(b). By~(iii) of part~(a), we have that
\begin{equation}
\|\IG_t\P_{s,t}\|_{L^1} \le \frac{C}{t-s}
\label{eq:G-bound}
\end{equation}
for~$t>s$. Since~$\IG_t$ is the~$L^1$-realization of the differential operator~$\mathcal{L}(t,\cdot,\cdot)$, the inequality~\eqref{eq:G-bound} imposes a certain smoothening property on~$\P_{s,t}$. We will exploit this smoothness to show compactness of~$\P_{s,t}$.

First note, that the regularity assumptions by Amann on the problem (the coefficients of the differential operator and the boundary parametrization), given in~\cite[pp.~225 and pp.~235]{Ama83}, are implied by our Assumption~\ref{ass:regularity}. It is shown in~\cite[Proposition~9.2]{Ama83}, that~$D(\IG_t)$ (imposed with the graph norm) is continuously embedded in the Sobolev space~$W_1^1$ (weakly differentiable~$L^1$ functions), i.e.\ there is a constant~$\tilde{C}>0$ such that for every~$u\in D(\IG_t)$ one has
\begin{equation}
\|u\|_{W_1^1} \le \tilde{C}\left(\|\IG_t u\|_{L^1} + \|u\|_{L^1}\right)\,.
\label{eq:cnt_embed}
\end{equation}
Substituting~\eqref{eq:G-bound} into~\eqref{eq:cnt_embed}, we have for~$u = \P_{s,t}u_s$ that
\[
\|\P_{s,t}u_s\|_{W_1^1} \le \tilde{C}\left(\frac{C}{t-s} + 1\right)\|u_s\|_{L^1}\,.
\]
Thus, by the Rellich--Kondrachov theorem (which implies compact embedding of~$W_1^1$ into~$L^1$) we have that~$\P_{s,t}:L^1\to L^1$ is a compact operator for~$s<t$.

We remark that the compactness of~$\P_{s,t}$ on~$L^r$, $1<r<\infty$, is somewhat easier to obtain, since a bound as in~\eqref{eq:cnt_embed} readily follows from the \emph{Agmon--Douglis--Nirenberg estimate}~\cite[(4.25) on pp.~131]{Tan96}.
\end{proof}

\begin{proof}[Proof of Lemma \ref{lem:spectral_con}]
By the definition of~$\augIG$ we have
\begin{equation}
\partial_r \aug{f}(r,\cdot) = (\IG_r-\mu)f_r\,, \qquad r\in\tau S^1\,.
\label{eq:auggenEV}
\end{equation}
Since~$\P_{s,t}$ is the evolution operator of the parabolic evolution equation~$\partial_r u(r) = \IG_r u(r)$, we see that $e^{-\mu t}\P_{s,s+t} f_s$ solves~\eqref{eq:auggenEV}. 
Uniqueness of the solutions to~\eqref{eq:auggenEV} in~$L^1$ follows from Lemma~\ref{lem:analytic evolution}, hence~$f_{s+t\mod \tau} = e^{-\mu t}\P_{s,s+t}f_s$.
\end{proof}

\begin{proof}[Proof of Theorem~\ref{thm:IGcohPeriodic}]

By Proposition~\ref{prop:CohPoiB} the left hand side of equation~\eqref{eq:lyap auggen} is equal to $\tfrac{1}{\tau} \log |\lambda_2(\P_{s,s+\tau})|$. If~$\augIG$ has an eigenvalue~$\mu$ then~$\P_{s,s+\tau}$ has an eigenvalue~$e^{\mu \tau}$, as shown in Lemma~\ref{lem:spectral_con}. It remains to show, that~$\P_{s,s+\tau}$ having an eigenvalue~$\lambda$ implies that~$\augIG$ has an eigenvalue~$\mu$ such that~$e^{\mu \tau}=\lambda$, with the corresponding eigenfunctions being equal too.

First note, that~$\lambda\in\sigma(\P_{s,s+\tau})$ implies~$\lambda\in\sigma(\P_{s+t,s+t+\tau})$ for every~$t\ge 0$ by Lemma~\ref{lem:cycInterchange}.
Fix~$s\in\tau S^1$, and let~$f_s$ be an eigenfunction of~$\P_{s,s+\tau}$ with eigenvalue~$\lambda\neq 0$. Further, let~$\mu\in\C$ be any complex number such that~$e^{\mu \tau} = \lambda$.

Define~$f_{s+t} = e^{-\mu t}\P_{s,s+t} f_s$ for~$t\ge 0$. It is easy to see that the consistency properties hold, i.e.\ $f_{s+\tau} = f_s$, and~$f_{s+t}$ is an eigenfunction of~$\P_{s+t,s+t+\tau}$ at eigenvalue~$\lambda$ for any~$t> 0$.

Lemma~\ref{lem:analytic evolution} (b) states that~$f_{s+t}\in D(\IG_{s+t})$ and differentiable with respect to~$t$ for every~$t>0$. Now, differentiate~$f_{s+t}$ with respect to~$t$ to obtain
\begin{eqnarray*}
\partial_t f_{s+t} & = & -\mu e^{-\mu t}\P_{s,s+t}f_s + e^{-\mu t} \IG_{s+t}\P_{s,s+t} f_s \\
& = & -\mu f_{s+t} + \IG_{s+t} f_{s+t}
\end{eqnarray*}
Rearranging terms yields that~$\aug{f}$, defined by $\aug{f}(r,\cdot) = f_r$, is an eigenfunction of~$\augIG$ at the eigenvalue~$\mu$. The construction shows that~$\mathrm{Re}(\mu) = \tfrac{1}{\tau}\log|\lambda|$.

\end{proof}

\end{appendix}

\bibliographystyle{alpha}
\bibliography{References}

\end{document}